\documentclass[9pt]{article}
\usepackage{amsmath}
\usepackage{parskip}
\usepackage{enumerate}
\usepackage{amssymb}
\usepackage{amsthm}
\usepackage{graphicx}
\usepackage{mathabx}
\usepackage{float}
\usepackage{tikz}
\usepackage{etex}
\DeclareGraphicsExtensions{.bmp,.png,.pdf,.jpg}
\usepackage{latexsym}
\usepackage{amscd}
\usepackage[all]{xy}
\usepackage[utf8]{inputenc}
\usepackage[linkcolor=blue,citecolor=blue, colorlinks=true,backref=true,bookmarks=true]{hyperref}
\usepackage{cite}

\newtheorem{teor}{Theorem}[section]
\newtheorem{cor}[teor]{Corollary}
\newtheorem{lem}[teor]{Lemma}

\newtheorem{prop}[teor]{Proposition}
\newtheorem{defn}[teor]{Definition}
\newtheorem{rem}[teor]{Remark}

\def\R{\mathbb{R}}

\def\N{\mathbb{N}}

\setlength{\parindent}{2em}
\setlength{\parskip}{1em}

\numberwithin{equation}{section}

\title{\bf{Convergence of the Weighted Yamabe Flow}}

\author{Zetian Yan}

\begin{document}
\maketitle

\begin{abstract}
We introduce the weighted Yamabe flow 
\begin{displaymath}
\left\{
\begin{array}{ll}
\frac{\partial g}{\partial t} &=(r^m_{\phi}-R^m_{\phi})g\\
\frac{\partial \phi}{\partial t} &=\frac{m}{2}(R^m_{\phi}-r^m_{\phi})
\end{array}
\right.
\end{displaymath}
on a smooth metric measure space $(M^n, g, e^{-\phi}{\rm dvol}_g, m)$, where $R^m_{\phi}$ denotes the associated weighted scalar curvature, and $r^m_{\phi}$ denotes the mean value of the weighted scalar curvature. We prove long-time existence and convergence of the weighted Yamabe flow if the dimension $n$ satisfies $n\geqslant 3$.
\end{abstract}

\noindent \textbf{Key points:} Yamabe flow, Convergence, Smooth measure metric spaces. \\
\noindent\makebox[\linewidth]{\rule{\textwidth}{0.4pt}}
AMS classification (2010). Primary: 35G25. Secondary: 35K90, 47J35, 53A30.

\section{Introduction}
The Yamabe flow was first introduced by Richard Hamilton in \cite{Hamilton88}. Hamilton conjectured that, for every initial metric, the flow converges to a conformal metric of constant scalar curvature. In case $Y(M^n, g_0)\leqslant 0$, it is not difficult to show that the conformal factor is uniformly bounded above and below. Moreover, the flow convergences to a metric of constant scalar curvature as $t\to \infty$.  

The case $Y(M^n, g_0)>0$ is more interesting. Chow \cite{Chow92} proved the convergence of the flow for locally conformally flat metrics with positive Ricci curvature. Ye \cite{Ye94} later extended the result to all locally conformal flat metrics. Later, Brendle \cite{Brendle05} proved convergence of the flow for all conformal classes and arbitrary initial metrics, and extended the results to higher dimensions \cite{Brendle07}.

In this paper, we generalize the Yamabe flow to smooth metric measure spaces. 

To explain the results of this article requires some terminology. A \textit{smooth metric measure space} is a four-tuple $(M^n, g, e^{-\phi} dV_g, m)$ of a Riemannian manifold $(M^n,g)$, a smooth measure $e^{-\phi}dV_g$ determined by a function $\phi\in C^{\infty}(M)$ and the Riemannian volume element of $g$, and a dimensional parameter $m\in [0,\infty]$. In the case $m=0$, we require $\phi=0$. We freuqently denote a smooth metric measure space by the triple $(M^n, g, v^{m}dV_g)$, where the measure is $v^{m}dV_g$ and the dimensional parameter is encoded as the exponent of $v$. In accordance with this convention, $v$ and $\phi$ will denote throughout this article functions which are related by $v^m=e^{-\phi}$; when $m=\infty$, this is to be interpreted as the formal definition of the symbol $v^{\infty}$. 

Conformal equivalence between smooth metric measure spaces are defined as the following, see \cite{Case15} for more details.
\begin{defn}\label{condef}
Smooth metric measure spaces $(M^n, g, e^{-\phi}dV_g, m)$ \\and $(M^n, \hat{g}, e^{-\hat{\phi}}dV_{\hat{g}}, m)$ are conformally equivalent if there is a smooth function $\sigma\in C^{\infty}(M)$ such that
\begin{equation}\label{1.2}
(M^n, \hat{g}, e^{-\hat{\phi}}dV_{\hat{g}}, m)=(M^n, e^{\frac{2}{m+n-2}\sigma}g, e^{\frac{m+n}{m+n-2}\sigma}e^{-\phi}dV_g, m).
\end{equation}
In the case $m=0$, conformal equivalence is defined in the classical sense. 
\end{defn}
If we denote $e^{\frac{1}{2}\sigma}$ by $w$, (\ref{1.2}) is equivalent to 
\begin{equation}
(M^n, \hat{g}, e^{-\hat{\phi}}dV_{\hat{g}}, m)=(M^n,w^{\frac{4}{m+n-2}}g,w^{\frac{2(m+n)}{m+n-2}}e^{-\phi}dV_{g}, m),
\end{equation}
which is an alternative way to formulate the conformal equivalence of smooth metric measure spaces.

The \textit{weighted scalar curvature} $R^m_{\phi}$ of a smooth metric measure space is 
\begin{equation}\label{defn1}
R^m_{\phi}:=R+2\Delta \phi-\frac{m+1}{m}|\nabla\phi|^2,
\end{equation}
where $R$ and $\Delta$ are the scalar curvature and the Laplacian associated to the metric $g$, respectively.

In this article, we study a Yamabe-type flow on the smooth metric measure space $(M^n, g, e^{-\phi}dV_g, m)$, $m\in (0,\infty)$, called the weighted Yamabe flow. The definiton of the weighted Yamabe flow arises from the following observation.
\par We consider a family of pairs $(g(t), \phi(t))\in \mathcal{M}_{+}\times C^{\infty}(M)$, where $\mathcal{M}_{+}$ is the space of Riemannian metrics on $M^n$. The fact $(g(t), \phi(t))$ varies within the conformal class $[(g_0, \phi_0)]$ implies that the metric $(e^{\phi(t)})^{\frac{2}{m}}g(t)$ will be fixed. Denoting $\frac{\partial \phi(t)}{\partial t}=\psi(t)$ and $\frac{\partial g(t)}{\partial t}=h(t)g(t)$, we conclude that
\begin{displaymath}
\frac{2}{m}\psi(t)+h(t)=0.
\end{displaymath}
Based on this observation, we define the (normalized) weighted Yamabe flow as:
\begin{equation}\label{flow}
\left\{
\begin{array}{ll}
\frac{\partial g}{\partial t} &=(r^m_{\phi}-R^m_{\phi})g, \\
\frac{\partial \phi}{\partial t} &=\frac{m}{2}(R^m_{\phi}-r^m_{\phi}),
\end{array}
\right.
\end{equation}
where $r^m_{\phi}$ is the mean value of $R^m_{\phi}$; i.e.
\begin{displaymath}
r^m_{\phi}=\frac{\int_M R^m_{\phi} e^{-\phi}dV_g}{\int_M e^{-\phi}dV_g}.
\end{displaymath}
By formulas of first variations in \cite[Theorem 1.174]{Besse87}, direct calculation shows that evolution equations for $R_g$, $\Delta \phi$ and $|\nabla \phi|^2$ are
\begin{align*}
    \frac{\partial R_g}{\partial t}&=(n-1)\Delta R^m_{\phi}+(R^m_{\phi}-r^m_{\phi})R_g\\
    \frac{\partial \Delta \phi}{\partial t} &=(R^m_{\phi}-r^m_{\phi})\Delta\phi-\frac{n-2}{2}g(dR^m_{\phi}, d\phi)+\frac{m}{2}\Delta R^m_{\phi}\\
    \frac{\partial g(\nabla \phi, \nabla \phi)}{\partial t} &=(R^m_{\phi}-r^m_{\phi})g(\nabla \phi, \nabla \phi)+mg(dR^m_{\phi}, d\phi).
\end{align*}
Hence, by the definition of $R^m_{\phi}$, 
\begin{equation}\label{eq:R}
\frac{\partial R^m_{\phi}}{\partial t}
=(n+m-1)\Delta_{ \phi(t)}R^m_{\phi}+R^m_{\phi}(R^m_{\phi}-r^m_{\phi}),
\end{equation}
where $\Delta_{g(t), \phi(t)}$ is the weighted Laplacian on $(M^n, g(t), e^{-\phi(t)}dV_{g(t)}, m)$.
\begin{rem}{The weighted Laplacian:} 
Let $(M^n, g, v^{m}dV_g)$ be a smooth metric measure space. The weighted Laplacian $\Delta_{\phi}: C^{\infty}(M)\to C^{\infty}(M)$ is the operator
\begin{displaymath}
\Delta_{\phi}:=\Delta-\nabla \phi.
\end{displaymath}
It is formally self-adjoint with respect to the measure $e^{-\phi}dV_{g}$, see \cite{Case15} for more details. 
\end{rem}
Equation (\ref{eq:R}) is analogous to the evolution of the scalar curvature along the normalized Yamabe flow. Noting that the flow (\ref{flow}) is subcritical in the sense that $\frac{2(n+m)}{n+m-2}<\frac{2n}{n-2}$. As a result, we can establish the sequential compactness in Proposition \ref{prop:com}, which is the main difference between the classical Yamabe flow and (\ref{flow}). Moreover, we adapt an argument of Brendle \cite{Brendle05} to establish long-time existence and convergence of the weighted Yamabe flow. 
\begin{teor}\label{main}
On a smooth metric measure spaces $(M^n, g, e^{-\phi}dV_g, m)$, where $(M^n, g)$ is a closed Riemannian manifold of dimension $n\geqslant3$, for every choice of the initial metric and the measure, the weighted Yamabe flow (\ref{flow}) exists for all time and converges to a metric with constant weighted scalar curvature. 
\end{teor}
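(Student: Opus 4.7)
The strategy is to adapt Brendle's proof of convergence for the classical Yamabe flow \cite{Brendle05, Brendle07}, using the observation that the evolution equation (\ref{eq:R}) has the same structure as the scalar curvature evolution along the classical Yamabe flow, with $n+m-1$ in place of the dimensional constant $n-1$ and $\Delta_{g,\phi}$ in place of $\Delta_g$. The effective dimension is therefore $N=n+m$; the natural conformal Sobolev exponent is $\frac{2(n+m)}{n+m-2}$, and the natural ``round'' model is the warped product $S^n\times F^m(\mu)$ from (\ref{warped}).

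First I would establish short-time existence. Using the conformal parametrization in Definition \ref{condef}, the pair $(g(t),\phi(t))$ is encoded in a single scalar conformal factor $\sigma(t)$, and (\ref{flow}) reduces to a quasilinear parabolic PDE for $\sigma$, which has a unique smooth solution on a maximal interval $[0,T)$ by standard theory. Next I would record the basic conservation and monotonicity laws. The identity $\partial_t (e^{-\phi}\,{\rm dvol}_g) = \frac{n+m}{2}(r^m_\phi - R^m_\phi)e^{-\phi}\,{\rm dvol}_g$ shows that the weighted volume $V=\int_M e^{-\phi}\,{\rm dvol}_g$ is preserved. Multiplying (\ref{eq:R}) by $e^{-\phi}\,{\rm dvol}_g$ and integrating, the self-adjointness of $\Delta_{g,\phi}$ gives
\[
\frac{d}{dt}\int_M R^m_\phi\, e^{-\phi}\,{\rm dvol}_g = -\tfrac{n+m-2}{2}\int_M (R^m_\phi - r^m_\phi)^2\, e^{-\phi}\,{\rm dvol}_g \leq 0,
\]
so the weighted Yamabe energy $\mathcal E = V^{-\frac{n+m-2}{n+m}}\int_M R^m_\phi\,e^{-\phi}\,{\rm dvol}_g$ is nonincreasing and $r^m_\phi$ is monotone nonincreasing, bounded below by the conformally invariant weighted Yamabe number $Y_m$.

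I would then split into cases by the sign of $Y_m$. When $Y_m\leq 0$, the monotonicity of $r^m_\phi$ together with maximum principle arguments applied to (\ref{eq:R}) yields uniform two-sided bounds on $R^m_\phi$, hence long-time existence; exponential convergence to a metric of constant weighted scalar curvature then follows from the dissipation identity above by a standard spectral/Lyapunov argument on the linearization at the limiting state.

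The case $Y_m>0$ is the main obstacle, mirroring the hard case of \cite{Brendle05}. Following Brendle's scheme, I would first establish uniform $L^p$-bounds on $R^m_\phi$ for an appropriate $p$ via Moser iteration, using the weighted Sobolev inequality for $(M^n,g,e^{-\phi}\,{\rm dvol}_g,m)$ with critical exponent $\frac{2(n+m)}{n+m-2}$ available from \cite{Case15}. If these bounds fail to upgrade to $L^\infty$, a concentration-compactness analysis will produce a bubble at some point $x_0$, whose rescaled profile is a nontrivial entire solution of the limiting elliptic equation on the warped-product model. A strict weighted Aubin-type inequality $Y_m<Y_m(S^n\times F^m(\mu))$, together with a Pohozaev identity adapted to the weighted setting, then rules out bubbling, except on the model itself, which can be handled directly by spherical-harmonic decomposition. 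Once $L^\infty$-bounds on $R^m_\phi$ are secured, exponential decay of $R^m_\phi - r^m_\phi$ follows as in \cite{Brendle05}. The hardest technical point will be identifying the correct weighted strict Aubin inequality and verifying that the concentration-compactness analysis goes through in the smooth metric measure setting; the algebra of (\ref{eq:R}) otherwise tracks Brendle's argument verbatim under the substitution $n\mapsto n+m$.
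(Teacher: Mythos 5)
Your outline diverges from the paper at the two places that actually carry the proof, and at both there is a genuine gap. First, the bubbling/concentration-compactness analysis you propose for the case $Y_{n,m}>0$ --- culminating in an unproven ``strict weighted Aubin-type inequality'' $Y_m<Y_m(S^n\times F^m(\mu))$ plus a Pohozaev identity --- is both unestablished and, more importantly, unnecessary. The key structural observation of the weighted problem, which your proposal misses, is that for $m>0$ the relevant exponent $\frac{n+m+2}{n+m-2}$ is \emph{strictly subcritical} with respect to the actual dimension $n$ of the manifold: $\frac{2(n+m)}{n+m-2}<\frac{2n}{n-2}$. Hence the normalization $\int_M w_i^{\frac{2(n+m)}{n+m-2}}e^{-\phi_0}{\rm dvol}_{g_0}=1$ together with the $L^{\frac{2(n+m)}{n+m+2}}$ decay of $R^m_\phi-r^m_\infty$ already gives compactness of the conformal factors by standard elliptic theory (Proposition 4.1 of the paper); no bubbles can form and no positive-mass or Aubin-type input is needed. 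Your heuristic ``substitute $n\mapsto n+m$ and track Brendle verbatim'' is exactly what obscures this: the analytic criticality is governed by $n$, not by $n+m$, and that mismatch is what makes the weighted flow strictly easier than the classical one.

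Second, your treatment of convergence once $L^\infty$ bounds are in hand (``exponential decay of $R^m_\phi-r^m_\phi$ follows as in \cite{Brendle05}'') skips the actual mechanism. Uniform curvature bounds only give subsequential convergence; to obtain a unique limit and a decay rate one needs a Lojasiewicz--Simon gradient inequality for the weighted Yamabe energy, which is precisely Proposition \ref{cri} of the paper. Its proof occupies all of Section 4: one decomposes along the spectrum of the linearized operator $\psi\mapsto w_\infty^{-\frac{4}{n+m-2}}\bigl(\frac{4(n+m-1)}{n+m-2}\Delta_{g_0,\phi_0}\psi-R^m_{\phi_0}\psi\bigr)$, constructs a finite-dimensional real-analytic reduction $z\mapsto\bar w_z$ by the implicit function theorem, and invokes Lojasiewicz's inequality for real-analytic functions; the resulting exponent $\gamma\in(0,1)$ yields only polynomial decay $r^m_\phi(t)-r^m_\infty\leqslant Ct^{-\frac{1-\gamma}{1+\gamma}}$, not exponential decay, and this integrated decay (Proposition \ref{uni}) is what feeds back into the uniform bounds on $w(t)$ for all time. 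Without this step your argument does not rule out the limit depending on the subsequence. Your Section-2-type material (short-time existence, volume conservation, monotonicity of $r^m_\phi$, $L^p$ curvature estimates, and the reduction of the case $Y_{n,m}\leqslant 0$) is consistent with the paper.
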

This article is organized as follows. As mentioned above, we first deal with the positive case.

In Section 2 we prove that the conformal factor $w(t)$ can not blow up in finite time by controlling $w(t)$ from above and below on the interval $[0,T]$. The long-time existence follows from this.

Convergence of the weighted Yamabe flow (\ref{flow}) will be based on the following crucial proposition.
\\\textbf{Proposition 3.3.} Let $\{ t_i: i\in \N \}$ be a sequence of times such that $t_i\to \infty$ as $i\to \infty$. Then, we can find a real number $0<\gamma<1$ and a constant $C$ such that, after passing to a subsequence, we have
\begin{equation}
    r^m_{\phi}(t_i)-r^m_{\infty}\leqslant C\left(\int_M w(t_i)^{\frac{2(n+m)}{n+m-2}}|R^m_{\phi}(t_i)-r^m_{\infty}|^{\frac{2(n+m)}{n+m+2}}e^{-\phi_0} dV_{g_0}\right)^{\frac{n+m+2}{2(n+m)}(1+\gamma)}
\end{equation}
for all integers $i$ in that subsequence. 

In Section 3 under Proposition \ref{cri}, we can obtain decay rates of $r^m_{\phi}(t)$ and the uniform upper bound of $|R^m_{\phi}(t)-r^m_{\phi}(t)|$ in $L^2$ norm. Together with the interior regularity theorem, $w(t)$ is uniformaly bounded above and below on $[0,\infty)$, such that the weighted Yamabe flow can converge smoothly.
 
In Section 4 we complete the proof of Proposition \ref{cri} using the spectral theorem of self-adjoint operators and asymptotic analysis.

In Section 5 in the same spirit as \cite{Ye94}, we refine the argument in Section 2 to obtain the uniform bound on $w(t)$ and prove the long-time existence and smooth convergence in the negative case. Besides, in the zero case, we obtain the Harnack inequality such that uniform smooth estimates hold.

\section{Longtime existence}
In this section we collect some basic facts for smooth metric measure spaces and prove various properties of the weighted Yamabe flow that will be used throughout this article.
\begin{defn}
On a smooth metric measure space $(M^n, g, e^{-\phi}dV_g, m)$, which is conformal to $(M^n, g_0, e^{-\phi_0}dV_{g_0}, m)$ in the sense of Definition \ref{condef},
\begin{displaymath}
(M^n, g, e^{-\phi}dV_g, m)=(M^n, w^{\frac{4}{n+m-2}}g_0, w^{\frac{2(m+n)}{n+m-2}}e^{-\phi_0}dV_{g_0}, m),
\end{displaymath} 
analogous to the classical Yamabe problem, we define the normalized energy functional $E(w)$ as
\begin{equation}\label{energy}
     E_{(g_0,\phi_0)}(w)=\frac{\int_M \left(\frac{4(n+m-1)}{n+m-2}L^m_{\phi_0}w,w\right)e^{-\phi_0}dV_{g_0} }{\left( \int_M w^{\frac{2(n+m)}{n+m-2}}e^{-\phi_0}dV_{g_0}\right)^{\frac{n+m-2}{n+m}}},
\end{equation}
where $L^m_{\phi_0}$ is the weighted conformal Laplacian on $(M^n, g_0, e^{-\phi_0}dV_{g_0}, m)$
\begin{displaymath}
L^m_{\phi_0}=-\Delta_{\phi_0}+\frac{n+m-2}{4(n+m-1)}R^m_{\phi_0}.
\end{displaymath}
\end{defn}
\begin{rem}{The weighted scalar curvature:}
More generally, the weighted scalar curvature is defined as
\begin{equation}\label{defn2}
R^m_{\phi}:=R+2\Delta \phi-\frac{m+1}{m}|\nabla\phi|^2+m(m-1)\mu e^{\frac{2\phi}{m}}.
\end{equation}
The smooth metric measure space $(M^n, g, e^{-\phi}dV_g, m)$ can be regarded as the base of the warped product 
\begin{equation}\label{warped}
    \left(M^n\times F^m(\mu), g\oplus e^{-\frac{2\phi}{m}}h \right)
\end{equation}
where $(F^m(\mu),h)$ is the $m$-dimensional simply connected spaceform with constant sectional curvature $\mu$ \cite{Case19}. 
$R^m_{\phi}$ defined in (\ref{defn2}) is the natural analogue of the scalar curvature because it is the scalar curvature of the warped product (\ref{warped}) and plays the role of the scalar curvature in various geometric problems (see \cite{Lott07,Perelman02,Case13,Case15} for more details). The constancy of $R^m_{\phi}$ defined in (\ref{defn1}) is equivalent to the fact that the warped product 
$\left(M^n\times F^m(0), g\oplus e^{-\frac{2\phi}{m}}h \right)$ has constant scalar curvature. Throughout this article, we will adopt (\ref{defn1}) as the definition of the weighted scalar curvature.


\end{rem}

\begin{rem}{The normalized total weighted scalar curvature:}
Under the setting of Definition \ref{energy}, by the transformation law of the weighted scalar curvature in \cite{Case12}, 
\begin{equation}\label{eq:conf}
R^m_{\phi}=\frac{4(n+m-1)}{n+m-2}w^{-\frac{m+n+2}{m+n-2}}L^m_{\phi_0}w,
\end{equation}
the normalized energy $E_{(g_0,\phi_0)}(w)$ is exactly the normalized total weighted scalar curvature of $(M^n, g, e^{-\phi}dV_g, m)$; i.e.
\begin{displaymath}
E_{(g_0,\phi_0)}(w)=E_{(g,\phi)}(1)= \frac{\int_M R^m_{\phi}e^{-\phi}dV_{g} }{\left( {\rm Vol}\left(M^n, e^{-\phi}dV_g\right)\right)^{\frac{n+m-2}{n+m}}}.
\end{displaymath}
We set 
\begin{equation}
    Y_{n,m}[(g, \phi)]=\inf\{E_{(g_0,\phi_0)}(w)| w\in C^{\infty}(M; \R_{+})\}.
\end{equation}
By the transformation law in (\ref{eq:conf}), $Y_{n,m}[(g, \phi)]$ is conformal invariant.
\end{rem}
\begin{rem}
The limit smooth metric measure space $(M,g_{\infty},e^{-{\phi_{\infty}}}dV_{g_{\infty}}, m)$ of (\ref{flow}) has constant weighted scalar curvature; i.e.
\begin{equation}\label{eq:pre}
L^m_{\phi_0}w_{\infty}=\frac{n+m-2}{4(n+m-1)}\lambda w_{\infty}^{\frac{m+n+2}{m+n-2}}
\end{equation}
for some constant $\lambda$, where $w_{\infty}$ is determined by
\begin{displaymath}
\begin{split}
g_{\infty}&=w_{\infty}^{\frac{4}{n+m-2}}g_0, \\
e^{-\phi_{\infty}}&=w_{\infty}^{\frac{2m}{n+m-2}}e^{-\phi_0}.
\end{split} 
\end{displaymath}
Since the energy functional $E$ is subcritical in the sense that $\frac{2(n+m)}{n+m-2}<\frac{2n}{n-2}$, the equation (\ref{eq:pre}) can be solved directly by minimizing the energy. Therefore, we are interested in the weighted Yamabe flow itself, instead of solving (\ref{eq:pre}). 
\end{rem}

As usual in this article, all integrals are computed with respect to the weighted measure $e^{-\phi}dV_g$. We denote $W^{1,2}(M, e^{-\phi}dV_g)$ and $L^{p}(M, e^{-\phi}dV_g)$ respectively the closure of $C^{\infty}(M)$ with respect to the norm
\begin{displaymath}
\begin{split}
    \|w\|_{W^{1,2}(M, e^{-\phi}dV_g)}&:=\left(\int_M \left(|\nabla w|^2+w^2\right)e^{-\phi}dV_g\right)^{\frac{1}{2}}\\
    \|w\|_{L^{p}(M, e^{-\phi}dV_g)}&:=\left(\int_M |w|^p e^{-\phi}dV_g\right)^{\frac{1}{p}}.
\end{split}
\end{displaymath}

As observed in \cite{Case15}, the sign of $Y_{n,m}[(g, \phi)]$ is the same as the sign of the weighted conformal Laplacian.
\begin{prop}[{\cite[Propositon 3.5]{Case15}}]\label{threecase}
Let $(M^n, g, e^{-\phi}dV_g, m)$ be a compact smooth metric measure space and denote
\begin{displaymath}
\lambda_1(L^m_{\phi}):=\inf \left\{\frac{(L^m_{\phi}w,w)}{\|w\|^2_2}| 0\neq w\in W^{1,2}(M, e^{-\phi}dV_g)\right\}.
\end{displaymath}
Then exactly one of the three following statements is true:
\begin{itemize}
    \item $\lambda_1(L^m_{\phi})$ and $Y_{n,m}[(g, \phi)]$ are both positive.
    \item $\lambda_1(L^m_{\phi})$ and $Y_{n,m}[(g, \phi)]$ are both zero.
    \item $\lambda_1(L^m_{\phi})$ and $Y_{n,m}[(g, \phi)]$ are both negative.
\end{itemize}
\end{prop}

In light of the discussion in \cite{Ye94}, in case $Y_{n,m}[(g, \phi)]\leqslant 0$, it is not difficult to show convergence of the weighted Yamabe flow (\ref{flow})  as $t\to \infty$. We leave the proof to section 5 and deal with the positive case firstly. 

In the following, without further comment, we choose $(M^n, g_0, e^{-\phi_0}dV_{g_0}, m)$ to be the initial metric measure space with $Y_{n,m}[(g_0, \phi_0)]> 0$. Since the weighted Yamabe flow preserves the conformal structure, we may write \begin{equation}
\begin{cases}
&g(t)=w(t)^{\frac{4}{n+m-2}}g_0, \\
&e^{-\phi(t)}=w(t)^{\frac{2m}{n+m-2}}e^{-\phi_0},
\end{cases}
\end{equation} 
as the solution of (\ref{flow}) with $(g(0), \phi(0))=(g_0, \phi_0)$. Hence, the weighted Yamabe flow reduces to the following evolution equation for the conformal factor:
\begin{equation}\label{eq:factor}
    \frac{\partial }{\partial t}w(t)^{\frac{n+m+2}{n+m-2}}=\frac{n+m+2}{4}\left(\frac{4(n+m-1)}{n+m-2}\Delta_{\phi_0}w-R^m_{\phi_0}w+r^m_{\phi}w^{\frac{n+m+2}{n+m-2}}\right).
\end{equation}

Since 
\begin{equation}
    \frac{d}{d t}\int_M e^{-\phi(t)}dV_{g(t)}
 =\frac{n+m}{2}\int_M (r^m_{\phi}-R^m_{\phi})e^{-\phi}dV_{g}=0, 
\end{equation}
we may assume that 
\begin{equation}\label{eq:vol}
\int_M e^{-\phi(t)}dV_{g(t)}=1
\end{equation}
for all $t\geqslant 0$. With this normalization, the mean value of the weighted scalar curvature can be written as
\begin{displaymath}
r^m_{\phi}(t)=\int_M R^m_{\phi}(t) e^{-\phi(t)}dV_{g(t)}.
\end{displaymath}
Using the evolution equation (\ref{eq:R}), we obtain
\begin{equation}\label{eq:r}
\frac{d}{d t}r^m_{\phi}(t)
=-\frac{n+m-2}{2}\int_M (r^m_{\phi}-R^m_{\phi})^2e^{-\phi}dV_{g}\leqslant 0.
\end{equation}
Observe that $r^m_{\phi}(t)>0$ since $Y_{n,m}[(g, \phi)]>0$. Hence, $r^m_{\phi}(t)$ can be bounded above and below i.e.
\begin{equation}\label{decreasing}
    0<r^m_{\phi}(t)\leqslant r^m_{\phi}(0).
\end{equation}
In particular, the function $t\mapsto r^m_{\phi}(t)$ is decreasing.
\begin{prop}\label{unilow}
The weighted scalar curvature of the pair $(g(t), \phi(t))$ satisfies
\begin{equation}
    \inf_{M} R^m_{\phi}(t)\geqslant \min \left\{ \inf_{M}R^m_{\phi}(0), 0 \right\}
\end{equation}
for all $t\geqslant 0$.
\end{prop}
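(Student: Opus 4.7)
The plan is to apply the parabolic weak maximum principle to the evolution equation (\ref{eq:R}) satisfied by $R^m_{\phi}$, combined with the sign information on $r^m_{\phi}(t)$ already established in the excerpt.

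First, set $\rho(t):=\inf_{M} R^m_{\phi}(\cdot,t)$. At any interior spatial minimum $x_0$ of $R^m_{\phi}(\cdot,t)$ one has $\nabla R^m_{\phi}(x_0,t)=0$ and $\Delta R^m_{\phi}(x_0,t)\geqslant 0$; since $\Delta_{\phi}=\Delta-\nabla\phi\cdot\nabla$, the gradient term vanishes at $x_0$, and so
\begin{displaymath}
\Delta_{g(t),\phi(t)}R^m_{\phi}(x_0,t)\geqslant 0.
\end{displaymath}
Plugging this into (\ref{eq:R}) and applying Hamilton's maximum principle for scalars on an evolving manifold (or a standard barrier/viscosity argument), $\rho(t)$ is locally Lipschitz and satisfies the differential inequality
\begin{displaymath}
\frac{d\rho}{dt}\geqslant \rho(t)\bigl(\rho(t)-r^m_{\phi}(t)\bigr)
\end{displaymath}
for almost every $t\geqslant 0$.

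Next, I use the sign information on $r^m_{\phi}$. The excerpt already established that under the assumption $Y_{n,m}[(g_0,\phi_0)]>0$ one has $r^m_{\phi}(t)>0$ for all $t\geqslant 0$. Therefore, whenever $\rho(t)\leqslant 0$, both factors $\rho(t)$ and $\rho(t)-r^m_{\phi}(t)$ are nonpositive, so their product is nonnegative and the differential inequality gives $\rho'(t)\geqslant 0$. In other words, $\rho$ is nondecreasing on the set $\{t:\rho(t)\leqslant 0\}$.

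Finally, I split into cases. If $\rho(0)\geqslant 0$, then the monotonicity just described, together with a standard first-crossing-time argument, prevents $\rho$ from ever becoming negative: suppose $t_1=\inf\{t:\rho(t)<0\}<\infty$; by continuity $\rho(t_1)=0$, but on any small interval $(t_1,t_1+\eps)$ on which $\rho<0$ the inequality forces $\rho$ to be increasing, contradicting $\rho(t)<\rho(t_1)=0$. Hence $\rho(t)\geqslant 0=\min\{\rho(0),0\}$. If instead $\rho(0)<0$, the same monotonicity on $\{\rho\leqslant 0\}$ gives $\rho(t)\geqslant \rho(0)$ until $\rho$ first reaches $0$, after which the previous case applies. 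In both cases $\rho(t)\geqslant\min\{\rho(0),0\}$, which is the claim. The main (minor) obstacle is justifying the differential inequality for $\rho(t)$ rigorously given that the metric, and hence the Laplacian, evolves; this is handled by Hamilton's scalar maximum principle and only requires that the coefficient of the reaction term on the right-hand side of (\ref{eq:R}) be continuous in $(x,t)$, which is clear as long as the flow is smooth.
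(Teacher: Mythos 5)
Your proof is correct and follows essentially the same route as the paper: apply the parabolic maximum principle to the evolution equation (\ref{eq:R}) for $R^m_{\phi}$ and use the positivity of $r^m_{\phi}(t)$ (which comes from $Y_{n,m}[(g_0,\phi_0)]>0$) to conclude the infimum cannot drop below $\min\{\inf_M R^m_{\phi}(0),0\}$. The only cosmetic difference is that the paper first discards the nonnegative term $(R^m_{\phi})^2$ to reduce to the linear reaction inequality $\partial_t R^m_{\phi}\geqslant (n+m-1)\Delta_{\phi}R^m_{\phi}-r^m_{\phi}R^m_{\phi}$ before invoking the maximum principle, whereas you keep the full quadratic reaction $\rho(\rho-r^m_{\phi})$ and observe directly that it is nonnegative when $\rho\leqslant 0$; both are equivalent applications of the same idea.
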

\begin{proof}
According to the evolution equation (\ref{eq:R}),
\begin{displaymath}
\begin{split}
    \frac{\partial R^m_{\phi}}{\partial t} & =(n+m-1)\Delta_{ \phi(t)}R^m_{\phi}+R^m_{\phi}(R^m_{\phi}-r^m_{\phi})\\
    & \geqslant (n+m-1)\Delta_{ \phi(t)}R^m_{\phi}-r^m_{\phi}R^m_{\phi}.
\end{split}
\end{displaymath}
Since $r^m_{\phi}(t)>0$, the assertion follows
from the maximum principle. 
\end{proof}
\begin{rem}{Positivity along the flow: }
In particular, if $R^m_{\phi}(0)>0$, we have $R^m_{\phi}(t)>0$ for all $t\geqslant 0$.
\end{rem}
\begin{rem}{Lower bound of $\phi$: }
From above discussion, we obtain that
\begin{equation}
    \frac{\partial \phi}{\partial t}\geqslant\frac{m}{2}(\inf_{M}R^m_{\phi}(0)-r^m_{\phi}(0)),
\end{equation}
which provides a lower bound of $\phi$.
\end{rem}
For abbreviation, let
\begin{displaymath}
\sigma=\max \left\{ \sup_{M}(1-R^m_{\phi}(0)), 1\right\},
\end{displaymath}
so that $R^m_{\phi}(t)+\sigma\geqslant 1$ for all $t\geqslant 0$.

The following result is similar to that in \cite{Brendle05}. Our arguments mostly follow those of Brendle, but in our setting, all integrals are computed using the weighted measure.
\begin{lem}\label{eq: ev}
For every $p>2$, we have
\begin{displaymath}
\begin{split}
    &\frac{d}{d t} \int_M (R^m_{\phi(t)}+\sigma)^{p-1}e^{-\phi(t)}dV_{g(t)}\\
   &=-\frac{4(n+m-1)(p-2)}{p-1}\int_M |d(R^m_{\phi(t)}+\sigma)^{\frac{p-1}{2}}|^2_{g(t)}e^{-\phi(t)}dV_{g(t)}\\
    & -\frac{n+m+2-2p}{2}\int_M((R^m_{\phi(t)}+\sigma)^{p-1}-(r^m_{\phi(t)}+\sigma)^{p-1})(R^m_{\phi(t)}-r^m_{\phi(t)})e^{-\phi(t)}dV_{g(t)}\\
    & -(p-1)\int_M \sigma ((R^m_{\phi(t)}+\sigma)^{p-2}-(r^m_{\phi(t)}+\sigma)^{p-2})(R^m_{\phi(t)}-r^m_{\phi(t)})e^{-\phi(t)}dV_{g(t)}.
\end{split}
\end{displaymath}
\end{lem}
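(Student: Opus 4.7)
The plan is to differentiate the integral directly, substitute the two evolution equations (the one for $R^m_\phi$ derived earlier and the one for the weighted volume element computed in \eqref{eq:vol}), integrate by parts using self-adjointness of $\Delta_{g(t),\phi(t)}$, and finally symmetrize the remaining lower-order terms by exploiting that $R^m_\phi-r^m_\phi$ has zero weighted mean.

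First I would write
\begin{displaymath}
\frac{d}{dt}\int_M (R^m_\phi+\sigma)^{p-1}e^{-\phi}{\rm dvol}_g = (p-1)\int_M (R^m_\phi+\sigma)^{p-2}\,\partial_t R^m_\phi\, e^{-\phi}{\rm dvol}_g + \int_M (R^m_\phi+\sigma)^{p-1}\,\partial_t\bigl(e^{-\phi}{\rm dvol}_g\bigr).
\end{displaymath}
The calculation leading to \eqref{eq:vol} already identifies $\partial_t(e^{-\phi}{\rm dvol}_g)=\tfrac{n+m}{2}(r^m_\phi-R^m_\phi)e^{-\phi}{\rm dvol}_g$, and \eqref{eq:R} gives $\partial_t R^m_\phi=(n+m-1)\Delta_{g,\phi}R^m_\phi+R^m_\phi(R^m_\phi-r^m_\phi)$. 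Substituting splits the right-hand side into a Laplacian piece and two polynomial pieces in $R^m_\phi$.

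Next I would handle the Laplacian piece by integration by parts, using that $\Delta_{g,\phi}$ is formally self-adjoint for the measure $e^{-\phi}{\rm dvol}_g$ (the remark following \eqref{eq:R}). This produces
\begin{displaymath}
(p-1)(n+m-1)\int_M (R^m_\phi+\sigma)^{p-2}\Delta_{g,\phi}R^m_\phi\, e^{-\phi}{\rm dvol}_g = -(p-1)(p-2)(n+m-1)\int_M (R^m_\phi+\sigma)^{p-3}|dR^m_\phi|^2 e^{-\phi}{\rm dvol}_g,
\end{displaymath}
and then the pointwise identity $|d(R^m_\phi+\sigma)^{(p-1)/2}|^2=\tfrac{(p-1)^2}{4}(R^m_\phi+\sigma)^{p-3}|dR^m_\phi|^2$ converts this into the first term $-\tfrac{4(n+m-1)(p-2)}{p-1}\int_M|d(R^m_\phi+\sigma)^{(p-1)/2}|^2 e^{-\phi}{\rm dvol}_g$ of the lemma.

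For the two polynomial contributions, I would combine them with the common factor $(R^m_\phi-r^m_\phi)e^{-\phi}{\rm dvol}_g$ and use $R^m_\phi=(R^m_\phi+\sigma)-\sigma$. A little algebra gives the integrand
\begin{displaymath}
\Bigl[\bigl(p-1-\tfrac{n+m}{2}\bigr)(R^m_\phi+\sigma)^{p-1}-(p-1)\sigma(R^m_\phi+\sigma)^{p-2}\Bigr](R^m_\phi-r^m_\phi).
\end{displaymath}
At this point the key observation, inherited from the normalization $\int_M e^{-\phi}{\rm dvol}_g=1$ and the definition of $r^m_\phi$, is $\int_M (R^m_\phi-r^m_\phi)e^{-\phi}{\rm dvol}_g=0$. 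Hence I may subtract the constants $(p-1-\tfrac{n+m}{2})(r^m_\phi+\sigma)^{p-1}$ and $-(p-1)\sigma(r^m_\phi+\sigma)^{p-2}$ without changing the integral, which transforms the bracket into $-\bigl(\tfrac{n+m+2}{2}-p\bigr)\bigl[(R^m_\phi+\sigma)^{p-1}-(r^m_\phi+\sigma)^{p-1}\bigr]-(p-1)\sigma\bigl[(R^m_\phi+\sigma)^{p-2}-(r^m_\phi+\sigma)^{p-2}\bigr]$. Matching signs yields the remaining two terms in the statement.

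The computation is otherwise a routine combination of product rule, self-adjoint integration by parts, and bookkeeping; the main care is tracking signs in the symmetrization step, where one must recognize $p-1-\tfrac{n+m}{2}=-\bigl(\tfrac{n+m+2}{2}-p\bigr)$ so that the coefficient on the leading polynomial piece matches the form stated in the lemma.
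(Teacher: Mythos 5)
Your proposal is correct and follows the same route as the paper: product rule on the integrand, substitution of the evolution equations for $R^m_\phi$ and for the weighted volume form, self-adjoint integration by parts for the Laplacian piece, and finally the insertion of the constant terms $(r^m_\phi+\sigma)^{p-1}$, $(r^m_\phi+\sigma)^{p-2}$ justified by $\int_M(R^m_\phi-r^m_\phi)e^{-\phi}\mathrm{dvol}_g=0$. The paper's proof is terser (it only writes the first product-rule expansion and then jumps to the final form, remarking that $r^m_\phi$ and $\sigma$ are constant over $M$), but your argument fills in exactly the intermediate algebra the paper leaves implicit.
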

\begin{proof}
This follows immediately from the evolution equation (\ref{eq:R}) and integration by parts. Notice that $r^m_{\phi}(t)$ and $\sigma$ are independent of points in $M$.
\begin{displaymath}
\begin{split}
    &\frac{d}{d t} \int_M (R^m_{\phi(t)}+\sigma)^{p-1}e^{-\phi(t)}dV_{g(t)}\\
    &= \int_M (p-1)(R^m_{\phi(t)}+\sigma)^{p-2}\frac{\partial  R^m_{\phi}}{\partial t} e^{-\phi(t)}dV_{g(t)}\\
    & +\int_M  (R^m_{\phi(t)}+\sigma)^{p-1}\frac{n+m}{2} (r^m_{\phi}-R^m_{\phi})e^{-\phi(t)}dV_{g(t)}\\
    &=-\frac{4(n+m-1)(p-2)}{p-1}\int_M |d(R^m_{\phi(t)}+\sigma)^{\frac{p-1}{2}}|^2_{g(t)}e^{-\phi(t)}dV_{g(t)}\\
    & -(\frac{n+m+2-2p}{2})\int_M((R^m_{\phi(t)}+\sigma)^{p-1}-(r^m_{\phi(t)}+\sigma)^{p-1})(R^m_{\phi(t)}-r^m_{\phi(t)})e^{-\phi(t)}dV_{g(t)}\\
    & -(p-1)\int_M \sigma ((R^m_{\phi(t)}+\sigma)^{p-2}-(r^m_{\phi(t)}+\sigma)^{p-2})(R^m_{\phi(t)}-r^m_{\phi(t)})e^{-\phi(t)}dV_{g(t)}.
\end{split}
\end{displaymath}
\end{proof}
\begin{lem}\label{eq:ev2}
For every $p>\max\{ \frac{n+m}{2},2\}$, we have 
\begin{equation}
    \begin{split}
        \frac{d}{d t}\int_M |R^m_{\phi(t)}-r^m_{\phi(t)}|^p e^{-\phi(t)}dV_{g(t)}\leqslant &C \left(\int_M |R^m_{\phi(t)}-r^m_{\phi(t)}|^p e^{-\phi(t)}dV_{g(t)} \right)^{\frac{2p-(n+m)+2}{2p-(n+m)}}\\
        & +C \int_M |R^m_{\phi(t)}-r^m_{\phi(t)}|^p e^{-\phi(t)}dV_{g(t)}
    \end{split}
\end{equation}
for some uniform constant C independent of $t$.
\end{lem}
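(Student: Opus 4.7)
The plan is to differentiate $\int_M |F|^p e^{-\phi}{\rm dvol}_g$, where $F := R^m_{\phi}-r^m_{\phi}$, produce a good negative gradient term through integration by parts, and absorb all remaining terms via a weighted Sobolev--interpolation--Young argument.

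First, using (\ref{eq:R}) for $\partial_t R^m_{\phi}$, (\ref{eq:r}) for $\tfrac{d}{dt}r^m_{\phi}$, the evolution of the weighted measure $\partial_t(e^{-\phi}{\rm dvol}_g)=-\tfrac{n+m}{2}F\,e^{-\phi}{\rm dvol}_g$ from (\ref{eq:vol}), and the self-adjointness of $\Delta_{\phi}$ to integrate $|F|^{p-2}F\cdot\Delta_{\phi}F$ by parts, I obtain
\[
\frac{d}{dt}\int_M |F|^p e^{-\phi}{\rm dvol}_g = -p(p-1)(n+m-1)\int_M |F|^{p-2}|\nabla F|^2 e^{-\phi}{\rm dvol}_g + A(t),
\]
where $A(t)$ collects the non-gradient contributions. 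Writing $R^m_{\phi}=F+r^m_{\phi}$ with $0<r^m_{\phi}\leqslant r^m_{\phi}(0)$ uniformly, and estimating the cross term coming from $\partial_t r^m_{\phi}$, namely $\bigl(\int F^2 e^{-\phi}\bigr)\bigl(\int |F|^{p-2}F\, e^{-\phi}\bigr)$, by H\"older together with the normalization ${\rm Vol}=1$, one checks that
\[
A(t)\leqslant C\int_M |F|^{p+1} e^{-\phi}{\rm dvol}_g + C\int_M |F|^p e^{-\phi}{\rm dvol}_g.
\]

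It remains to control $\int_M|F|^{p+1}e^{-\phi}{\rm dvol}_g$. Apply the weighted Sobolev inequality on $(M,g(t),e^{-\phi(t)}{\rm dvol}_{g(t)})$ to the function $u=|F|^{p/2}$; the Sobolev constant can be taken uniform in $t$ because the conformal-factor bounds from the preceding part of Section 2 give uniform upper and lower bounds on $w(t)$ on the interval under consideration. This yields
\[
\left(\int_M |F|^{\tfrac{p(n+m)}{n+m-2}}e^{-\phi}{\rm dvol}_g\right)^{\tfrac{n+m-2}{n+m}}\leqslant C\int_M |F|^{p-2}|\nabla F|^2 e^{-\phi}{\rm dvol}_g + C\int_M |F|^p e^{-\phi}{\rm dvol}_g.
\]
Since $p>\max\{(n+m)/2,2\}$ implies $p<p+1<\tfrac{p(n+m)}{n+m-2}$, H\"older interpolation gives
\[
\int_M |F|^{p+1}e^{-\phi}{\rm dvol}_g\leqslant \|F\|_{L^p}^{p+1-\tfrac{n+m}{2}}\|F\|_{L^{p(n+m)/(n+m-2)}}^{\tfrac{n+m}{2}}.
\]
Feeding the Sobolev estimate into this and then applying Young's inequality with conjugate exponents $\tfrac{2p}{n+m}$ and $\tfrac{2p}{2p-(n+m)}$ splits the bound into an $\epsilon$-multiple of the gradient integral (absorbed into the negative contribution by choosing $\epsilon$ small) plus a constant multiple of $\bigl(\int_M |F|^p e^{-\phi}{\rm dvol}_g\bigr)^{(2p-(n+m)+2)/(2p-(n+m))}$, which is exactly the first term in the asserted inequality.

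The main obstacle is securing a time-uniform weighted Sobolev constant along the flow; once that is in place (via the conformal-factor bounds of Section 2 together with the conformal transformation law of the weighted Laplacian in \cite{Case12}), the remainder is careful bookkeeping of the exponents in the Sobolev--interpolation--Young chain so that the final power of $\int_M |F|^p e^{-\phi}{\rm dvol}_g$ is precisely $\tfrac{2p-(n+m)+2}{2p-(n+m)}$.
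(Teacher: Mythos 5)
Your overall architecture (differentiate, integrate by parts, bound the error terms by $\int|F|^{p+1}+\int|F|^p$, then Sobolev--interpolation--Young with exponents $\tfrac{2p}{n+m}$ and $\tfrac{2p}{2p-(n+m)}$) matches the paper's, and your exponent bookkeeping is correct. However, there is a genuine gap exactly at the point you yourself flag as ``the main obstacle'': the time-uniform weighted Sobolev constant. You propose to obtain it from ``the conformal-factor bounds from the preceding part of Section 2,'' but those bounds (Proposition \ref{bound}) hold only on a finite interval $[0,T]$ with constants $C(T)$, $c(T)$ depending on $T$. That would make your Sobolev constant, and hence the constant $C$ in the lemma, depend on $T$ --- whereas the lemma is used in Proposition 3.1 precisely to control the flow as $t\to\infty$ and therefore needs $C$ independent of $t$ on all of $[0,\infty)$. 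You cannot instead invoke the global bounds on $w(t)$ (Proposition \ref{pro:bound}), because those are proved in Section 3 \emph{from} Corollary \ref{cor:p}, which rests on Proposition 3.1, which rests on this lemma: the argument would be circular.

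The paper's way around this is to never invoke an extrinsic Sobolev constant at all. After integrating by parts it packages the gradient term together with the zeroth-order term as the quadratic form $\int_M\bigl(L^m_{\phi}|F|^{p/2},|F|^{p/2}\bigr)e^{-\phi}\,{\rm dvol}_g$ of the weighted conformal Laplacian of the \emph{evolving} metric, and then bounds this from below by
\begin{displaymath}
\frac{n+m-2}{4(n+m-1)}\,Y_{n,m}[(g_0,\phi_0)]\left(\int_M |F|^{\frac{p(n+m)}{n+m-2}}e^{-\phi}\,{\rm dvol}_g\right)^{\frac{n+m-2}{n+m}},
\end{displaymath}
using that $Y_{n,m}$ is a conformal invariant of the class $[(g_0,\phi_0)]$ and is assumed positive; this is a Sobolev inequality whose constant is manifestly independent of $t$. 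To repair your proof you should do the same: absorb the $R^m_{\phi}$ zeroth-order term into $L^m_{\phi}$ (which produces an extra $\int R^m_{\phi}|F|^p$ contribution, controlled using $0<r^m_{\phi}(t)\leqslant r^m_{\phi}(0)$) and use the positivity of the weighted Yamabe invariant in place of a metric-dependent Sobolev embedding. With that substitution the rest of your chain goes through unchanged.
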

\begin{proof}
Using (\ref{eq:R}) and (\ref{eq:r}), we obtain
\begin{displaymath}
\begin{split}
    \frac{d}{d t}&\int_M |R^m_{\phi(t)}-r^m_{\phi(t)}|^p e^{-\phi(t)}dV_{g(t)}=p(n+m-1) \cdot\\
    & \int_M |R^m_{\phi(t)}-r^m_{\phi(t)}|^{p-2}(R^m_{\phi(t)}-r^m_{\phi(t)})\left(\Delta_{ \phi(t)}R^m_{\phi}+R^m_{\phi}(R^m_{\phi}-r^m_{\phi})\right) e^{-\phi(t)}dV_{g(t)}\\
    & -\frac{n+m}{2}\int_M (R^m_{\phi(t)}-r^m_{\phi(t)})|R^m_{\phi(t)}-r^m_{\phi(t)}|^p e^{-\phi(t)}dV_{g(t)}\\
    & +\frac{(n+m-2)p}{2} \int_M (R^m_{\phi(t)}-r^m_{\phi(t)})|R^m_{\phi(t)}-r^m_{\phi(t)}|^{p-2} e^{-\phi(t)}dV_{g(t)}\\
    &\times \int_M |R^m_{\phi(t)}-r^m_{\phi(t)}|^2 e^{-\phi(t)}dV_{g(t)}.
\end{split}
\end{displaymath}
Moreover, we have
\begin{displaymath}
    \begin{split}
    &\frac{d}{d t}\int_M |R^m_{\phi(t)}-r^m_{\phi(t)}|^p e^{-\phi(t)}dV_{g(t)}\\
    &=-\frac{4(p-1)(n+m-1)}{p}\int_M \left(L^m_{\phi}|R^m_{\phi(t)}-r^m_{\phi(t)}|^{\frac{p}{2}},|R^m_{\phi(t)}-r^m_{\phi(t)}|^{\frac{p}{2}}\right)e^{-\phi(t)}dV_{g(t)}\\
    & +\left(\frac{(n+m-2)(p-1)}{p}+p-\frac{n+m}{2}\right)\int_M (R^m_{\phi(t)}-r^m_{\phi(t)})|R^m_{\phi(t)}-r^m_{\phi(t)}|^p e^{-\phi(t)}dV_{g(t)}\\
    & +\left(\frac{(n+m-2)(p-1)}{p}+p\right) \int_M r^m_{\phi(t)}|R^m_{\phi(t)}-r^m_{\phi(t)}|^p e^{-\phi(t)}dV_{g(t)} \\
    & +\frac{(n+m-2)p}{2} \int_M (R^m_{\phi(t)}-r^m_{\phi(t)})|R^m_{\phi(t)}-r^m_{\phi(t)}|^{p-2} e^{-\phi(t)}dV_{g(t)}\\
    &\times \int_M |R^m_{\phi(t)}-r^m_{\phi(t)}|^2 e^{-\phi(t)}dV_{g(t)}.
\end{split}
\end{displaymath}

Since $Y_{n,m}[(g_0, \phi_0)]>0$ and the function $t\mapsto r^m_{\phi}(t)$ is decreasing, we obtain 
\begin{displaymath}
\begin{split}
   \frac{d}{d t}&\int_M |R^m_{\phi(t)}-r^m_{\phi(t)}|^p e^{-\phi(t)}dV_{g(t)}\\
   \leqslant&-\frac{(n+m-2)(p-1)}{p}Y_{n,m}\left(\int_M |R^m_{\phi(t)}-r^m_{\phi(t)}|^{\frac{p(n+m)}{n+m-2}}e^{-\phi(t)}dV_{g(t)}\right)^{\frac{n+m-2}{n+m}}\\
    & +\left(\frac{(n+m-2)(p-1)}{p}+p-\frac{n+m}{2}\right)\int_M |R^m_{\phi(t)}-r^m_{\phi(t)}|^{p+1} e^{-\phi(t)}dV_{g(t)} \\
    & +\left(\frac{(n+m-2)(p-1)}{p}+p\right) \int_M r^m_{\phi_0}|R^m_{\phi(t)}-r^m_{\phi(t)}|^p e^{-\phi(t)}dV_{g(t)} \\
    & +\frac{(n+m-2)p}{2} \int_M (R^m_{\phi(t)}-r^m_{\phi(t)})|R^m_{\phi(t)}-r^m_{\phi(t)}|^{p-2} e^{-\phi(t)}dV_{g(t)}\\
    &\times \int_M |R^m_{\phi(t)}-r^m_{\phi(t)}|^2 e^{-\phi(t)}dV_{g(t)}.
\end{split}
\end{displaymath}
By H\"older's inequality in $L^{p}(M, e^{-\phi(t)}dV_{g(t)})$ and (\ref{eq:vol}), we have
\begin{displaymath}
\begin{split}
    \int_M (R^m_{\phi(t)}-r^m_{\phi(t)})&|R^m_{\phi(t)}-r^m_{\phi(t)}|^{p-2} e^{-\phi(t)}dV_{g(t)}\times \int_M |R^m_{\phi(t)}-r^m_{\phi(t)}|^2 e^{-\phi(t)}dV_{g(t)}\\
    \leqslant &\left(\int_M |R^m_{\phi(t)}-r^m_{\phi(t)}|^{p} e^{-\phi(t)}dV_{g(t)}\right)^{\frac{p+1}{p}},
\end{split}
\end{displaymath}
and
\begin{displaymath}
\begin{split}
    \int_M |R^m_{\phi(t)}-r^m_{\phi(t)}|^{p+1} e^{-\phi(t)}dV_{g(t)}& \leqslant \left(\int_M |R^m_{\phi(t)}-r^m_{\phi(t)}|^{p} e^{-\phi(t)}dV_{g(t)}\right)^{\frac{2p-(n+m)+2}{2p}}\\
    &  \times \left(\int_M |R^m_{\phi(t)}-r^m_{\phi(t)}|^{\frac{p(n+m)}{n+m-2}} e^{-\phi(t)}dV_{g(t)}\right)^{\frac{n+m-2}{2p}}.
\end{split}
\end{displaymath}
Moreover, since $p>\max\{ \frac{n+m}{2},2\}$, we denote $\frac{2p-(n+m)+2}{2p-(n+m)}$ by $\bar{p}$. By Young's inequality, we obtain
\begin{displaymath}
\begin{split}
    \int_M |R^m_{\phi(t)}-r^m_{\phi(t)}|^{p+1} & e^{-\phi(t)}dV_{g(t)}
 \leqslant  C_1\left(\int_M |R^m_{\phi(t)}-r^m_{\phi(t)}|^{p} e^{-\phi(t)}dV_{g(t)}\right)^{\bar{p}}\\
    & +C_2\left(\int_M |R^m_{\phi(t)}-r^m_{\phi(t)}|^{\frac{p(n+m)}{n+m-2}} e^{-\phi(t)}dV_{g(t)}\right)^{\frac{n+m-2}{n+m}}.
\end{split}
\end{displaymath}
From this, the assertion follows.
\end{proof}
In order to bound the solution $w(t)$ above and below in the interval $[0, T]$, we need the following two lemmas.
\begin{lem}\label{lem:Harnack}
Let $P$ be a smooth function on $(M^n, g, e^{-\phi}dV_g, m)$. Moreover, assume that $u$ is a positive function on $M$ such that 
\begin{displaymath}
-\frac{4(n+m-1)}{n+m-2}\Delta_{\phi} u+Pu\geqslant 0.
\end{displaymath}
Then, there exists a constant $C$, depending only on $g$, $\phi$ and $P$, such that
\begin{equation}
    \int_M u e^{-\phi}dV_g\leqslant C \inf_{M} u.
\end{equation}
Moreover, we have
\begin{equation}
    \int_{M} u^{\frac{2(n+m)}{n+m-2}}e^{-\phi}dV_g\leqslant C\inf_M u(\sup_M u)^{\frac{n+m+2}{n+m-2}}.
\end{equation}
\end{lem}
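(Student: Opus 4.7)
The plan is to view $\mathcal{L}:=-\frac{4(n+m-1)}{n+m-2}\Delta_{\phi}+P$ as a linear, uniformly elliptic, second-order operator with smooth coefficients on the compact manifold $M^n$, so that the hypothesis just says $w$ is a nonnegative supersolution $\mathcal{L}w\geqslant 0$. The classical weak Harnack inequality of De~Giorgi--Nash--Moser, together with a standard chaining argument on the compact connected manifold $M$, then produces a global two-sided estimate $\sup_M w\leqslant C_1\inf_M w$, from which both claimed inequalities fall out by elementary manipulations.

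Using the formal self-adjointness of $\Delta_\phi$ with respect to the measure $e^{-\phi}{\rm dvol}_g$ recorded in the remark, the supersolution condition can be put in weak form: for every nonnegative $\eta\in C^\infty(M)$,
\[
\int_M\left(\frac{4(n+m-1)}{n+m-2}\,g(\nabla w,\nabla\eta)+Pw\eta\right)e^{-\phi}{\rm dvol}_g\geqslant 0.
\]
In any local coordinate chart this is a divergence-form elliptic inequality with bounded smooth coefficients, so Moser's weak Harnack inequality gives, on any geodesic ball $B_r(x)\subset M$ of sufficiently small radius,
\[
\left(\int_{B_r(x)} w^p\,e^{-\phi}{\rm dvol}_g\right)^{1/p}\leqslant C_0\inf_{B_{r/2}(x)} w
\]
for some $p>1$ and $C_0=C_0(g,\phi,P,r)$. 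Since $M$ is compact and connected, any two points are joined by a finite chain of overlapping such half-balls; iterating the weak Harnack estimate along such a chain and taking the supremum over all pairs yields $\sup_M w\leqslant C_1\inf_M w$ with $C_1$ depending only on $(g,\phi,P)$.

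Given the global Harnack inequality, the first claim follows from
\[
\int_M w\,e^{-\phi}{\rm dvol}_g\leqslant(\sup_M w)\int_M e^{-\phi}{\rm dvol}_g\leqslant C_1(\inf_M w)\int_M e^{-\phi}{\rm dvol}_g,
\]
while the second uses the pointwise factorization $w^{2(n+m)/(n+m-2)}=w\cdot w^{(n+m+2)/(n+m-2)}$, estimating the second factor by $(\sup_M w)^{(n+m+2)/(n+m-2)}$ and applying the first inequality to the remaining integral. The only genuinely nontrivial ingredient is Moser's weak Harnack inequality for supersolutions, but its hypotheses are automatic here because $M$ is closed and the coefficients of $\mathcal{L}$ (which depend on $g$, $\phi$ and $P$) are all smooth, hence bounded with uniform ellipticity constant. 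The one minor technical point is to run the chaining argument so that the resulting constant $C_1$ is expressed in terms of only the fixed data $(g,\phi,P)$; this is routine once a fixed finite covering of $M$ by coordinate balls is chosen.
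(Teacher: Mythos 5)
Your reduction to the Euclidean weak Harnack inequality via the weak formulation, and the observation that the weighted operator is a smooth divergence-form perturbation of the Laplacian, match the paper's approach (which invokes Gilbarg--Trudinger, Theorem 8.18, and then Brendle's covering argument). However, there is a genuine gap in your chaining step: you claim that iterating the weak Harnack estimate yields the full Harnack inequality $\sup_M w\leqslant C_1\inf_M w$, and you then derive the first assertion from $\int_M w\,e^{-\phi}{\rm dvol}_g\leqslant(\sup_M w)\cdot{\rm Vol}$. That intermediate claim is false for supersolutions. The weak Harnack inequality bounds an $L^p$ \emph{average} of $w$ from above by its infimum; it gives no upper bound on $\sup_M w$, because the complementary local boundedness estimate $\sup_{B_{r/2}}w\leqslant C\|w\|_{L^p(B_r)}$ requires $w$ to be a \emph{subsolution}, which is not assumed here. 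Concretely, if $P>0$, so that $\mathcal{L}=-\frac{4(n+m-1)}{n+m-2}\Delta_\phi+P$ is invertible with positive Green's function $G$, then $w_\eps=\mathcal{L}^{-1}f_\eps$ for nonnegative bumps $f_\eps$ concentrating at a point $x_0$ are positive supersolutions with $\sup_M w_\eps\to\infty$ (they approximate $G(\cdot,x_0)\sim d(\cdot,x_0)^{2-n}$) while $\inf_M w_\eps$ stays bounded away from $0$ and $\infty$; hence no constant $C_1$ depending only on $(g,\phi,P)$ can exist. The very shape of the lemma --- an $L^1$ bound by $\inf_M w$, and a second inequality that retains a factor of $(\sup_M w)^{(n+m+2)/(n+m-2)}$ rather than concluding $(\inf_M w)^{2(n+m)/(n+m-2)}$ --- is a signal that the full Harnack inequality is not available.

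The repair is short and is what the paper does. Keep the weak Harnack estimate in integral form, $\int_{B_{2r}(x)}w\,e^{-\phi}{\rm dvol}_g\leqslant L_0\inf_{B_r(x)}w$, cover $M$ by finitely many balls $B_r(x_k)$, and sum to get $\int_M w\,e^{-\phi}{\rm dvol}_g\leqslant L_0\sum_k\inf_{B_r(x_k)}w$. To compare the local infima, note that if $B_r(x_j)\cap B_r(x_k)\neq\vacio$ then $\inf_{B_r(x_j)}w\cdot\mu\bigl(B_r(x_j)\cap B_r(x_k)\bigr)\leqslant\int_{B_{2r}(x_k)}w\,e^{-\phi}{\rm dvol}_g\leqslant L_0\inf_{B_r(x_k)}w$, so chaining through overlapping balls (using connectedness of $M$) gives $\inf_{B_r(x_k)}w\leqslant C\inf_M w$ for every $k$, and the first assertion follows with no reference to $\sup_M w$. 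Your derivation of the second inequality from the first, via the pointwise bound $w^{\frac{2(n+m)}{n+m-2}}\leqslant(\sup_M w)^{\frac{n+m+2}{n+m-2}}w$, is correct as written and needs no change.
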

\begin{proof}
Fix $r>0$ sufficiently small. Notice that the weighted Laplacian $\Delta_{\phi}$ has the same second-order terms as the classical Laplacian. The difference only occurs on lower order terms. Therefore, the weak Harnack inequality for linear elliptic equations \cite[Theorem 8.18]{DN01} can still hold in the weighted case, i.e. we obtain
\begin{displaymath}
\int_{B_{2r}(x)} u e^{-\phi}dV_g\leqslant e^{-\inf\phi}\int_{B_{2r}(x)} u dV_g \leqslant e^{-\inf\phi}L_0\inf_{B_r(x)} u
\end{displaymath}
for some constant $L_0$.\\
The assertion follows from the same argument as that in \cite[Proposition A.2]{Brendle05}.
\end{proof}
\begin{prop}\label{bound}
Given any $T>0$, we can find positive constants $C(T)$ and $c(T)$ such that
\begin{displaymath}
\sup_M w(t)\leqslant C(T)
\end{displaymath}
and
\begin{displaymath}
\inf_M w(t)\geqslant c(T)
\end{displaymath}
for all $0\leqslant t\leqslant T$.
\end{prop}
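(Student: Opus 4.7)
The plan is to reduce the problem to a family of elliptic equations for $w(t)$ at each fixed time. From the conformal transformation law (\ref{eq:conf}), $w(t)$ satisfies
\begin{equation*}
-\tfrac{4(n+m-1)}{n+m-2}\Delta_{\phi_0} w + R^m_{\phi_0}\, w \;=\; R^m_\phi\, w^{\frac{n+m+2}{n+m-2}}
\end{equation*}
pointwise on $M$. Given a uniform (in $t\in[0,T]$) $L^p$ bound on $R^m_\phi$ for some $p>\tfrac{n+m}{2}$, weighted Moser iteration will yield $\sup_M w(t)\le C(T)$, and Lemma \ref{lem:Harnack} applied with $P=R^m_{\phi_0}-R^m_\phi\, w^{4/(n+m-2)}$ will yield $\inf_M w(t)\ge c(T)$.

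The heart of the argument is therefore to derive the $L^p$ bound on $R^m_\phi$. First I would pick $p_0\in(2,\tfrac{n+m+2}{2})$ and invoke Lemma \ref{eq: ev}: the gradient term is non-positive, the second term is non-positive because $(R^m_\phi+\sigma)^{p_0-1}-(r^m_\phi+\sigma)^{p_0-1}$ shares its sign with $R^m_\phi-r^m_\phi$, and the third term is dominated by $C\int_M(R^m_\phi+\sigma)^{p_0-1}e^{-\phi}{\rm dvol}_g$ using $R^m_\phi\ge 1-\sigma$ and $r^m_\phi\le r^m_\phi(0)$. Gronwall then gives a bound on $\int_M(R^m_\phi+\sigma)^{p_0-1}e^{-\phi}{\rm dvol}_g$ uniform in $t\ge 0$. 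To cross the threshold $(n+m)/2$, I would apply Lemma \ref{eq:ev2} with some $p>\tfrac{n+m}{2}$: the resulting inequality $f'(t)\le Cf(t)^{1+\alpha}+Cf(t)$ for $f(t)=\int_M|R^m_\phi-r^m_\phi|^p e^{-\phi}{\rm dvol}_g$ is controlled on any compact interval $[0,T]$ by explicit comparison with the separable ODE $F'=CF^{1+\alpha}+CF$, yielding the desired $L^p$ bound with constant depending on $T$.

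For the supremum bound, weighted Moser iteration on the semilinear equation above reduces $\sup_M w(t)$ to a power of $\|R^m_\phi\|_{L^p}$ and $\int_M w^{2(n+m)/(n+m-2)}e^{-\phi_0}{\rm dvol}_{g_0}=1$; the positivity $Y_{n,m}[(g_0,\phi_0)]>0$ furnishes the weighted Sobolev inequality needed for the iteration to converge. For the infimum bound, the choice $P=R^m_{\phi_0}-R^m_\phi\, w^{4/(n+m-2)}$ makes the conformal equation read $-\tfrac{4(n+m-1)}{n+m-2}\Delta_{\phi_0}w+Pw=0$, and the already-derived upper bound on $w$ combined with the $L^p$ bound on $R^m_\phi$ controls $P$ in the norm on which the constant in Lemma \ref{lem:Harnack} depends. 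The Harnack conclusion $\int_M w\,e^{-\phi_0}{\rm dvol}_{g_0}\le C\inf_M w$, coupled with the H\"older estimate $\int_M w\,e^{-\phi_0}{\rm dvol}_{g_0}\ge(\sup_M w)^{-(n+m+2)/(n+m-2)}$ that follows from the volume normalization, then produces $\inf_M w(t)\ge c(T)$.

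The main obstacle I anticipate is the superlinear bootstrap: the $f^{1+\alpha}$ term in Lemma \ref{eq:ev2} can in principle drive $f(t)$ to infinity in finite time, so one must argue carefully that on $[0,T]$ the solution stays below a threshold determined by the initial datum and the a priori $L^{p_0}$ bound from the first step. A secondary technical point is that Lemma \ref{lem:Harnack} is stated with a smooth potential $P$; since at each fixed $t$ the quantity $P$ is smooth, this is not a genuine restriction, but one must interpret the $P$-dependence of the Harnack constant as a dependence on an integral norm of $P$, so that its uniformity in $t$ is ensured by the $L^p$ bound on $R^m_\phi$ together with the already-established upper bound on $w$.
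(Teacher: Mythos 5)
Your proposal misses the short argument that actually proves this proposition, and the route you substitute for it has an unresolved gap.

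For the upper bound, the paper does not need any $L^p$ control of $R^m_{\phi}$ above the critical exponent: since $g(t)=w(t)^{4/(n+m-2)}g_0$ and $\partial_t g=(r^m_{\phi}-R^m_{\phi})g$, the conformal factor satisfies the pointwise ODE $\partial_t w=-\tfrac{n+m-2}{4}(R^m_{\phi}-r^m_{\phi})w$. The maximum principle (Proposition 2.3, encoded in $R^m_{\phi}(t)+\sigma\geqslant 1$) and the monotonicity $r^m_{\phi}(t)\leqslant r^m_{\phi}(0)$ give $\partial_t\ln w\leqslant \tfrac{n+m-2}{4}(r^m_{\phi}(0)+\sigma)$, hence $\sup_M w(t)\leqslant C(T)$ by integration. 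This is the whole proof of the first half. Your plan replaces this with an $L^p$ bound on $R^m_{\phi}$ for $p>\tfrac{n+m}{2}$ followed by Moser iteration, and the step where you cross the threshold is exactly where it breaks: the differential inequality of Lemma \ref{eq:ev2} is superlinear, the comparison ODE $F'=CF^{1+\alpha}+CF$ blows up at a finite time $T^*$ determined by $f(0)$, $C$ and $\alpha$, and nothing forces $T^*>T$ for an arbitrary prescribed $T$. You flag this as ``the main obstacle'' but the proposed fix (falling back on the $L^{p_0}$ bound from Lemma \ref{eq: ev}) does not close it, since that lemma only yields control at or below the critical exponent $\tfrac{n+m}{2}$. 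In the paper, Lemma \ref{eq:ev2} is used only in Section 3, and only in tandem with the time-integrability $\int_0^\infty\int_M|R^m_{\phi}-r^m_{\phi}|^p<\infty$, which is a different mechanism. A further underspecified point: for the critical nonlinearity $R^m_{\phi}w^{(n+m+2)/(n+m-2)}$, Moser iteration requires a \emph{smallness} condition on the local $L^q$ norm of $R^m_{\phi}$ (this is precisely the content of Lemma \ref{lem1} and Proposition \ref{prop2} in Section 3), not merely a global bound.

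For the lower bound your idea is close to the paper's, but the paper avoids the issue you raise about the $P$-dependence of the Harnack constant by choosing the \emph{time-independent} potential $P=R^m_{\phi_0}+\sigma\bigl(\sup_{0\leqslant t\leqslant T}\sup_M w(t)\bigr)^{4/(n+m-2)}$, available once the upper bound is known; the inequality $-\tfrac{4(n+m-1)}{n+m-2}\Delta_{\phi_0}w+Pw\geqslant (R^m_{\phi}+\sigma)w^{(n+m+2)/(n+m-2)}\geqslant 0$ then holds for every $t\in[0,T]$ with one fixed smooth $P$, so Lemma \ref{lem:Harnack} applies verbatim with a single constant. Your time-dependent choice of $P$ would require reproving the weak Harnack inequality with constants depending only on an integral norm of the potential, which Lemma \ref{lem:Harnack} as stated does not provide.
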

\begin{proof}
The function $w(t)$ satisfies
\begin{displaymath}
\frac{\partial}{\partial t}w(t)=-\frac{n+m-2}{4}(R^m_{\phi(t)}-r^m_{\phi(t)})w(t)\leqslant \frac{n+m-2}{4}(r^m_{\phi_0}+\sigma)w(t).
\end{displaymath}
Hence, 
\begin{displaymath}
\frac{\partial}{\partial t}\ln(w(t))\leqslant \frac{n+m-2}{4}(r^m_{\phi}(0)+\sigma).
\end{displaymath}
We conclude that $\sup_M w(t)\leqslant C(T)$ for all $0\leqslant t\leqslant T$. Hence, if we define
\begin{displaymath}
P=R^m_{\phi_0}+\sigma(\sup_{0\leqslant t\leqslant T}\sup_M w(t))^{\frac{4}{n+m-2}},
\end{displaymath}
then we obtain
\begin{equation}
    \begin{split}
    & -\frac{4(n+m-1)}{n+m-2}\Delta_{ \phi_0} w(t)+Pw(t)\\
    & \geqslant -\frac{4(n+m-1)}{n+m-2}\Delta_{ \phi_0} w(t)+R^m_{\phi_0}w(t)+\sigma w(t)^{\frac{n+m+2}{n+m-2}}\\
    & =(R^m_{\phi(t)}+\sigma)w(t)^{\frac{n+m+2}{n+m-2}}\geqslant 0
    \end{split}
\end{equation}
for all $0\leqslant t\leqslant T$. By Lemma \ref{lem:Harnack} and (\ref{eq:vol}), we can find a positive constant $c(T)$ such that 
\begin{displaymath}
\inf_M w(t)(\sup_M w(t))^{\frac{n+m+2}{n+m-2}}\geqslant c(T)
\end{displaymath}
for all $0\leqslant t\leqslant T$. Since $\sup_M w(t)\leqslant C(T)$, the assertion follows.
\end{proof}
\begin{prop}
Let $0<\alpha< \min\{\frac{2m}{n+m},1\}$. Given any $T>0$, there exists a constant $C(T)$ such that
\begin{equation}
    |w(x_1, t_1)-w(x_2, t_2)|\leqslant C(T)((t_1-t_2)^{\frac{\alpha}{2}}+d(x_1, x_2)^{\alpha})
\end{equation}
for all $x_1, x_2\in M$ and all $t_1, t_2\in [0,T]$ satisfying $0<t_1-t_2<1$.
\end{prop}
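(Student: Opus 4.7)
The plan is to bootstrap from the $L^\infty$ control of $w$ in Proposition~\ref{bound} to parabolic H\"older regularity by using $W^{2,p}$ estimates in space and a ball-averaging argument in time; this adapts Brendle's approach from \cite{Brendle05}. First I would integrate the differential inequality of Lemma~\ref{eq:ev2} on $[0,T]$ together with the monotonicity $r^m_{\phi}(t)\le r^m_{\phi}(0)$ to extract, for any fixed $p>\max\{(n+m)/2,2\}$, a bound
\begin{displaymath}
\sup_{t\in[0,T]}\int_M |R^m_{\phi}(t)-r^m_{\phi}(t)|^p e^{-\phi_0}{\rm dvol}_{g_0} \le C(T,p);
\end{displaymath}
the passage from the time-dependent measure $e^{-\phi(t)}{\rm dvol}_{g(t)}$ to the fixed one is harmless in view of Proposition~\ref{bound}.

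Next, the conformal change formula (\ref{eq:conf}) gives
\begin{displaymath}
L^m_{\phi_0}w \;=\; \frac{n+m-2}{4(n+m-1)}\, R^m_{\phi}\, w^{(m+n+2)/(m+n-2)},
\end{displaymath}
whose right-hand side is in $L^\infty([0,T];L^p(M))$ by the previous bound and Proposition~\ref{bound}. Since $L^m_{\phi_0}$ is a uniformly elliptic linear operator with smooth coefficients on the fixed background, Calder\'on--Zygmund theory delivers $w\in L^\infty([0,T];W^{2,p}(M))$. Morrey's embedding then gives the spatial H\"older estimate $|w(x_1,t)-w(x_2,t)|\le C(T)\,d(x_1,x_2)^{\alpha}$ uniformly in $t\in[0,T]$, valid for any $\alpha$ strictly less than $2-n/p$; choosing $p$ large enough accommodates every $\alpha\in(0,2m/(n+m))$.

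The temporal estimate is the core of the argument. With $r:=(t_1-t_2)^{1/2}$, I would introduce the weighted ball average $\bar w(t):=\bigl(\int_{B_r(x_1)}e^{-\phi_0}{\rm dvol}_{g_0}\bigr)^{-1}\int_{B_r(x_1)} w(\cdot,t)\,e^{-\phi_0}{\rm dvol}_{g_0}$ and split
\begin{displaymath}
w(x_1,t_1)-w(x_1,t_2)=[w(x_1,t_1)-\bar w(t_1)]+[\bar w(t_1)-\bar w(t_2)]+[\bar w(t_2)-w(x_1,t_2)].
\end{displaymath}
The outer brackets are controlled by the spatial H\"older bound as $C(T)\,r^{\alpha}=C(T)(t_1-t_2)^{\alpha/2}$. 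For the middle bracket, the evolution equation $\frac{\partial w}{\partial t}=-\frac{n+m-2}{4}(R^m_{\phi}-r^m_{\phi})w$, integrated in time and followed by H\"older's inequality with the uniform $L^p$ bound on $R^m_{\phi}-r^m_{\phi}$, yields a contribution of order $(t_1-t_2)\,r^{-n/p}$, which for $p$ chosen so that $1-n/(2p)\ge\alpha/2$ is at most $(t_1-t_2)^{\alpha/2}$. The triangle inequality across $(x_1,t_2)$ then assembles the spatial and temporal pieces into the claimed estimate.

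The main obstacle is this temporal step: the averaging scale $r$ must be coupled to $t_1-t_2$ via parabolic scaling, and the restriction $\alpha<2m/(n+m)$ emerges from balancing the Morrey exponent $2-n/p$ against the temporal gain $1-n/(2p)$ in the averaging argument, subject to the constraint that the differential inequality of Lemma~\ref{eq:ev2} be non-degenerate for the chosen $p$.
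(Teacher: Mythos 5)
Your overall architecture---spatial $W^{2,p}$ estimate from the elliptic equation for $w$, then parabolic ball-averaging at scale $r=(t_1-t_2)^{1/2}$ for the temporal modulus---is exactly the paper's, so the structural plan is sound. The gap is in your first step, the $L^p$ bound on $R^m_\phi-r^m_\phi$.

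You invoke Lemma~\ref{eq:ev2} (which requires $p>\max\{(n+m)/2,2\}$) and propose to ``integrate the differential inequality on $[0,T]$'' to obtain $\sup_{t\in[0,T]}\|R^m_\phi(t)-r^m_\phi(t)\|_{L^p}\le C(T,p)$. But Lemma~\ref{eq:ev2} is a \emph{superlinear} ODE inequality: writing $F(t)=\int_M|R^m_\phi-r^m_\phi|^p\,e^{-\phi}{\rm dvol}_g$, it gives $F'\le C F^{1+\delta}+CF$ with $\delta=\frac{2}{2p-(n+m)}>0$. Integrating such an inequality only controls $F$ up to a finite blow-up time $T^*$ determined by $F(0)$; for $T\ge T^*$ nothing follows, so you cannot conclude a bound ``for all $T$'' from that lemma alone. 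Since these estimates are being assembled precisely to \emph{establish} long-time existence, one cannot appeal to a priori smoothness on $[0,T]$ to close this gap either. The paper instead takes Lemma~\ref{eq: ev} at the critical exponent $p=\frac{n+m+2}{2}$ (so $p-1=\frac{n+m}{2}$), where the coefficient $\frac{n+m+2}{2}-p$ vanishes and all remaining terms have a sign; this produces a \emph{monotone decreasing} quantity $\int_M(R^m_\phi+\sigma)^{(n+m)/2}e^{-\phi}{\rm dvol}_g$, hence a genuinely uniform $L^{(n+m)/2}$ bound on $[0,T]$ for every $T$. One then obtains $L^p$ integrability of $R^m_\phi-r^m_\phi$ only for $p<\frac{n+m}{2}$ (by H\"older against the normalized volume), not for arbitrarily large $p$.

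This also corrects the motivation you give for the restriction $\alpha<\frac{2m}{n+m}$. It does not come from ``balancing'' the Morrey exponent against the temporal gain---with $\alpha=2-n/p$ one has $1-\frac{n}{2p}=\frac{\alpha}{2}$, so the two pieces agree exactly rather than compete. The restriction comes from $p$ being confined to the window $\frac{n}{2}<p<\frac{n+m}{2}$: the lower bound ensures the Morrey embedding $W^{2,p}\hookrightarrow C^{0,\alpha}$ is nontrivial, and the upper bound is dictated by the available $L^{(n+m)/2}$ control on $R^m_\phi$. Letting $p\uparrow\frac{n+m}{2}$ gives $\alpha\uparrow\frac{2m}{n+m}$, which is the stated range. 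Your claim that one may take $p$ arbitrarily large and ``accommodate every $\alpha\in(0,2m/(n+m))$'' is internally inconsistent---if you truly had $L^p$ for all $p$ you would get $\alpha$ arbitrarily close to $2$, not capped at $\frac{2m}{n+m}$.

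Once the $L^p$ bound on $R^m_\phi-r^m_\phi$ is obtained for $\frac{n}{2}<p<\frac{n+m}{2}$ by the route above, the rest of your outline (elliptic $W^{2,p}$ estimate via (\ref{eq:conf}) together with Proposition~\ref{bound}, spatial Morrey embedding, and ball-averaging in time with H\"older for $\partial_t w$) goes through as you describe and matches the paper.
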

\begin{proof}
Using Lemma \ref{eq: ev} with $p=\frac{n+m+2}{2}$, we obtain for all $0\leqslant t\leqslant T$
\begin{displaymath}
\frac{d}{dt}\int_{M}(R^m_{\phi(t)}+\sigma)^{\frac{n+m}{2}}e^{-\phi(t)}dV_{g(t)}\leqslant 0,
\end{displaymath}
which implies for all $0\leqslant t\leqslant T$
\begin{displaymath}
\int_{M}(R^m_{\phi(t)}+\sigma)^{\frac{n+m}{2}}e^{-\phi(t)}dV_{g(t)}\leqslant C.
\end{displaymath}
Hence, 
\begin{equation}\label{eq: p}
\begin{split}
& \left(\int_M |R^m_{\phi(t)}-r^m_{\phi(t)}|^{\frac{n+m}{2}} e^{-\phi(t)}dV_{g(t)}\right)^{\frac{2}{n+m}} \\
& \leqslant \left(\int_{M}(R^m_{\phi(t)}+\sigma)^{\frac{n+m}{2}}e^{-\phi(t)}dV_{g(t)}\right)^{\frac{2}{n+m}}+(r^m_{\phi(t)}+\sigma)\\
& \leqslant C.
\end{split}
\end{equation}
Let $\alpha=2-\frac{n}{p}$, where $\frac{n}{2}<p<\frac{n+m}{2}$, $m>0$. Using (\ref{eq:factor}), (\ref{eq: p}) and Proposition \ref{bound}, we obtain
\begin{equation}
    \int_M \left|-\frac{4(n+m-1)}{n+m-2}\Delta_{\phi_0} w(t)+R^m_{\phi_0}w(t)\right|^p e^{-\phi_0}dV_{g_0}\leqslant C(T)
\end{equation}
and
\begin{equation}
    \int_M \left|\frac{\partial}{\partial t}w(t)\right|^p e^{-\phi(t)}dV_{g(t)}\leqslant C(T)
\end{equation}
for all $t\in [0,T]$. By the embedding $W^{2,p}(M)\hookrightarrow C^{0,\alpha}(M)$, the first inequality implies that 
\begin{displaymath}
|w(x_1, t)-w(x_2, t)|\leqslant C(T)d(x_1, x_2)^{\alpha}
\end{displaymath}
for all $x_1, x_2\in M$ and all $t\in [0,T]$. Using the second inequality, we obtain
\begin{displaymath}
\begin{split}
    & |w(x, t_1)-w(x, t_2)|\\
    & \leqslant C(t_1-t_2)^{-\frac{n}{2}}\int_{B_{\sqrt{t_1-t_2}}(x)}|w(x, t_1)-w(x, t_2)|e^{-\phi_0}dV_{g_0}\\
    & \leqslant C(t_1-t_2)^{-\frac{n}{2}}\int_{B_{\sqrt{t_1-t_2}}(x)}|w(t_1)-w(t_2)|e^{-\phi_0}dV_{g_0}+C(T)(t_1-t_2)^{\frac{\alpha}{2}}\\
    & \leqslant C(t_1-t_2)^{-\frac{n-2}{2}}\sup_{t_2\leqslant t\leqslant t_1}\int_{B_{\sqrt{t_1-t_2}}(x)}\left|\frac{\partial}{\partial t}w(t)\right|e^{-\phi_0}dV_{g_0}+C(T)(t_1-t_2)^{\frac{\alpha}{2}}\\
    & \leqslant C(t_1-t_2)^{\frac{\alpha}{2}}\sup_{t_2\leqslant t\leqslant t_1}\left(\int_{B_{\sqrt{t_1-t_2}}(x)}\left|\frac{\partial}{\partial t}w(t)\right|^p e^{-\phi_0}dV_{g_0}\right)^{\frac{1}{p}}+C(T)(t_1-t_2)^{\frac{\alpha}{2}}\\
    & \leqslant C(T) (t_1-t_2)^{\frac{\alpha}{2}},
\end{split}
\end{displaymath}
for all $x\in M$ and all $t_1, t_2\in [0,T]$ satisfying $0<t_1-t_2<1$. This proves the assertion.
\end{proof}
We can now use the standard regularity theory for parabolic equations \cite[Section 3, Theorem 5]{Friedman13} to show that all higher order derivatives of $w$ are uniformly bounded on every fixed time interval $[0, T]$. Therefore, the flow exists for all time.

\section{Proof of the main result assuming Proposition \ref{cri}}
In this section, we will prove Theorem \ref{main} based on Proposition \ref{cri}. In the following, $c$ and $C$ are positive constants whose value are independent of $t$ and may change from line to line.
\begin{prop}
Fix $\max\{\frac{n+m}{2}, 2\}<p<\frac{n+m+2}{2}$. Then, we have
\begin{displaymath}
\lim_{t\to\infty}\int_M |R^m_{\phi(t)}-r^m_{\phi(t)}|^p e^{-\phi(t)}dV_{g(t)}=0.
\end{displaymath}
\end{prop}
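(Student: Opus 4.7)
The plan is to combine integrability in time of $I_2(t):=\int_M(R^m_\phi-r^m_\phi)^2 e^{-\phi(t)}\,\mathrm{dvol}_{g(t)}$ (coming from \eqref{eq:r}) with uniform mass and spacetime gradient bounds extracted from Lemma~\ref{eq: ev}, and then to propagate smallness from a ``good set'' of times to all times via the differential inequality of Lemma~\ref{eq:ev2}. First I would collect the $t$-integrable data: since $r^m_\phi$ is decreasing by \eqref{eq:r} and bounded below by $0$, it converges to some $r^m_\infty\ge 0$ and $\int_0^\infty I_2(t)\,dt<\infty$. Applying Lemma~\ref{eq: ev} with $p=(n+m+2)/2$ makes the coefficient $\tfrac{n+m+2}{2}-p$ vanish while the remaining two terms stay non-positive, so $\int_M(R^m_\phi+\sigma)^{(n+m)/2}e^{-\phi}\,\mathrm{dvol}_g$ is non-increasing. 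This yields a uniform-in-$t$ $L^{(n+m)/2}$ bound on $R^m_\phi+\sigma$ and, integrating the identity in $t$, a spacetime gradient bound
\[
\int_0^\infty\!\!\int_M |d(R^m_\phi+\sigma)^{(n+m)/4}|^2 e^{-\phi}\,\mathrm{dvol}_g\,dt<\infty.
\]

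Setting $u(t):=(R^m_\phi(t)+\sigma)^{(n+m)/4}$, the previous display forces the good set $G_\delta:=\{t:\int_M|\nabla u|^2 e^{-\phi}\,\mathrm{dvol}_g\le\delta\}$ to have complement of finite Lebesgue measure for every $\delta>0$. Positivity of $Y_{n,m}[(g_0,\phi_0)]$ supplies a weighted Poincar\'e--Sobolev inequality $\|u-\bar u\|_{L^{2(n+m)/(n+m-2)}}\le C\|\nabla u\|_{L^2}$, where $\bar u$ denotes the weighted mean, so on $G_\delta$ the function $u(t)$ is within $O(\sqrt{\delta})$ of the constant $\bar u(t)$ in $L^{2^{*}_{n+m}}$. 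Raising through the $4/(n+m)$-power and using the uniform $L^{(n+m)/2}$ bound to pin the limiting constant at $r^m_\phi(t)+\sigma$, one obtains that $R^m_\phi(t)-r^m_\phi(t)$ is small in $L^{(n+m)^2/[2(n+m-2)]}$. Because $(n+m)^2/[2(n+m-2)]>(n+m+2)/2>p$, H\"older's inequality (with the normalization $\int e^{-\phi}\,\mathrm{dvol}_g=1$) converts this into $I_p(t)\to 0$ along times in $G_\delta$ as $\delta\downarrow 0$.

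To pass from this subsequential statement to full convergence I would invoke Lemma~\ref{eq:ev2}, which gives $\tfrac{d}{dt}I_p(t)\le C\,(I_p(t)+I_p(t)^{1+\gamma})$ with $\gamma=2/(2p-(n+m))>0$. A Bihari/Gronwall comparison shows that once $I_p(t_*)\le\eta$ with $\eta\le 1$, one has $I_p(t)\le 2\eta$ on $[t_*,t_*+L]$ for an interval length $L$ depending only on $C$. Since $G_\delta$ is cofinite, for $T$ sufficiently large every interval of length $L$ inside $[T,\infty)$ meets $G_\delta$; combined with the preceding paragraph this gives $\limsup_{t\to\infty}I_p(t)\le 2\eta$ for arbitrarily small $\eta$, proving the proposition.

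The hard part will be the middle paragraph: moving from an $L^2$ gradient bound plus the $L^{(n+m)/2}$ mass bound to decay of $R^m_\phi-r^m_\phi$ in an $L^q$ norm with $q$ \emph{strictly larger} than $p$. Plain H\"older interpolation between $L^2$ and $L^{(n+m)/2}$ only reaches $q<(n+m)/2<p$, so one genuinely needs the Sobolev embedding supplied by $Y_{n,m}>0$ together with a careful identification of the spatial mean of $(R^m_\phi+\sigma)^{(n+m)/4}$ as the flow evolves.
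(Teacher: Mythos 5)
Your first paragraph and your final Bihari/Gronwall paragraph are sound and match what the paper does at the corresponding stages, but the middle paragraph --- which you correctly flag as the hard part --- contains a genuine gap. The inequality $\| u-\bar u\|_{L^{2(n+m)/(n+m-2)}}\leqslant C\|\nabla u\|_{L^2}$ with a constant uniform in $t$ is not supplied by $Y_{n,m}[(g_0,\phi_0)]>0$. The conformally invariant Sobolev inequality coming from the positive Yamabe-type invariant reads
\begin{displaymath}
Y_{n,m}\,\|f\|^2_{L^{\frac{2(n+m)}{n+m-2}}}\leqslant \int_M\Bigl(\tfrac{4(n+m-1)}{n+m-2}|\nabla f|^2_{g(t)}+R^m_{\phi}f^2\Bigr)e^{-\phi(t)}{\rm dvol}_{g(t)},
\end{displaymath}
so the right-hand side carries the zeroth-order term $\int R^m_{\phi}f^2$, which with $f=u-\bar u$ and only an $L^{(n+m)/2}$ bound on $R^m_{\phi}$ cannot be absorbed; and controlling $u-\bar u$ by $\|\nabla u\|_{L^2}$ alone further requires a uniform lower bound on the first nonzero weighted eigenvalue of $g(t)$, i.e.\ non-collapsing of the conformal factor. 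That uniform control on $w(t)$ is exactly what is established only later (Proposition 3.9), and its proof relies on the present proposition through Corollary 3.2 --- so your route is circular at this stage of the paper.

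The step you are missing is much more elementary: apply Lemma~\ref{eq: ev} with the \emph{given} exponent $p\in(\max\{\tfrac{n+m}{2},2\},\tfrac{n+m+2}{2})$ rather than with $p=\tfrac{n+m+2}{2}$. Discard the (non-positive) gradient term and third term entirely; since $p<\tfrac{n+m+2}{2}$ the coefficient $\tfrac{n+m+2}{2}-p$ is strictly positive, and since $p>2$ and $R^m_{\phi}+\sigma\geqslant 1$ one has the pointwise bound
\begin{displaymath}
\bigl((R^m_{\phi}+\sigma)^{p-1}-(r^m_{\phi}+\sigma)^{p-1}\bigr)(R^m_{\phi}-r^m_{\phi})\geqslant c\,|R^m_{\phi}-r^m_{\phi}|^{p},
\end{displaymath}
so that $\frac{d}{dt}\int_M(R^m_{\phi}+\sigma)^{p-1}e^{-\phi}{\rm dvol}_g\leqslant -c\,I_p(t)$. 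Integrating in time gives $\int_0^\infty I_p(t)\,dt<\infty$ directly for the target exponent $p$, hence $\liminf_{t\to\infty}I_p(t)=0$, and your concluding Gronwall argument via Lemma~\ref{eq:ev2} then upgrades this to $\lim_{t\to\infty}I_p(t)=0$. This bypasses the Sobolev/Poincar\'e machinery entirely; no spatial regularity of $R^m_{\phi}$ beyond the differential identity of Lemma~\ref{eq: ev} is needed.
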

\begin{proof}
It follows from Lemma \ref{eq: ev} that
\begin{displaymath}
\begin{split}
    \frac{d}{d t} &\int_M (R^m_{\phi(t)}+\sigma)^{p-1}e^{-\phi(t)}dV_{g(t)}\leqslant -\left(\frac{n+m+2}{2}-p\right)\cdot \\ &\int_M((R^m_{\phi(t)}+\sigma)^{p-1}-(r^m_{\phi(t)}+\sigma)^{p-1})(R^m_{\phi(t)}-r^m_{\phi(t)})e^{-\phi(t)}dV_{g(t)}.
\end{split}
\end{displaymath}
Since $p>2$, we have
\begin{displaymath}
((R^m_{\phi(t)}+\sigma)^{p-1}-(r^m_{\phi(t)}+\sigma)^{p-1})(R^m_{\phi(t)}-r^m_{\phi(t)})\geqslant c|R^m_{\phi(t)}-r^m_{\phi(t)}|^p
\end{displaymath}
for a suitable constant $c>0$. Since $p<\frac{n+m+2}{2}$, it follows that
\begin{displaymath}
\frac{d}{d t} \int_M (R^m_{\phi(t)}+\sigma)^{p-1}e^{-\phi(t)}dV_{g(t)}\leqslant -c\int_M|R^m_{\phi(t)}-r^m_{\phi(t)}|^pe^{-\phi(t)}dV_{g(t)}.
\end{displaymath}
Integrating with respect to $t$ yields
\begin{displaymath}
\int_0^{\infty}\int_M |R^m_{\phi(t)}-r^m_{\phi(t)}|^pe^{-\phi(t)}dV_{g(t)}\leqslant C,
\end{displaymath}
hence,
\begin{displaymath}
\liminf_{t\to \infty} \int_M |R^m_{\phi(t)}-r^m_{\phi(t)}|^p e^{-\phi(t)}dV_{g(t)}=0.
\end{displaymath}
On the other hand, since $p>\max\{\frac{n+m}{2}, 2\}$, by Lemma \ref{eq:ev2}, we have
\begin{equation}
    \begin{split}
        \frac{d}{d t}\int_M |R^m_{\phi(t)}-r^m_{\phi(t)}|^p e^{-\phi(t)}dV_{g(t)}\leqslant &C \left(\int_M |R^m_{\phi(t)}-r^m_{\phi(t)}|^p e^{-\phi(t)}dV_{g(t)} \right)^{\frac{2p-(n+m)+2}{2p-(n+m)}}\\
        & +C \int_M |R^m_{\phi(t)}-r^m_{\phi(t)}|^p e^{-\phi(t)}dV_{g(t)}
    \end{split}
\end{equation}
From this, the assertion follows.
\end{proof}
Hence, if we define
\begin{equation}
    r^m_{\infty}=\lim_{t\to\infty}r^m_{\phi(t)},
\end{equation}
then we obtain the following result:
\begin{cor}\label{cor:p}
For every $1<p<\frac{n+m+2}{2}$, we have
\begin{equation}
\lim_{t\to\infty}\int_M |R^m_{\phi(t)}-r^m_{\infty}|^p e^{-\phi(t)}dV_{g(t)}=0.
\end{equation}
\end{cor}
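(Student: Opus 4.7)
The plan is to deduce Corollary~\ref{cor:p} from the preceding proposition by a simple two-step bootstrap, starting from the observation that $r^m_\infty := \lim_{t\to\infty} r^m_\phi(t)$ is well-defined: by~(\ref{eq:r}) the function $t \mapsto r^m_\phi(t)$ is strictly decreasing, and the positivity of $Y_{n,m}[(g,\phi)]$ forces $r^m_\phi(t) > 0$ for every $t$, so the limit exists in $[0, r^m_\phi(0))$.

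On the upper range $\max\{\frac{n+m}{2}, 2\} < p < \frac{n+m+2}{2}$ covered by the preceding proposition, I would trade $r^m_\phi(t)$ for $r^m_\infty$ inside the integrand via the elementary inequality
\[
|R^m_\phi(t) - r^m_\infty|^p \leqslant 2^{p-1}\bigl(|R^m_\phi(t) - r^m_\phi(t)|^p + |r^m_\phi(t) - r^m_\infty|^p\bigr).
\]
Integrating against the probability measure $e^{-\phi(t)}{\rm dvol}_{g(t)}$ (normalized to total mass $1$ by~(\ref{eq:vol})), the first term on the right vanishes in the limit by the preceding proposition, while the second term vanishes simply because $r^m_\phi(t) \to r^m_\infty$ and the measure has unit mass.

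To descend from the upper range to all $1 < p < \frac{n+m+2}{2}$, I would fix any $q$ with $\max\{\frac{n+m}{2}, 2\} < q < \frac{n+m+2}{2}$ and invoke the monotonicity of $L^p$ norms on a probability space: Hölder's inequality (equivalently, Jensen applied to the convex function $s \mapsto s^{q/p}$) yields
\[
\int_M |R^m_\phi(t) - r^m_\infty|^p \, e^{-\phi(t)}{\rm dvol}_{g(t)} \leqslant \left(\int_M |R^m_\phi(t) - r^m_\infty|^q \, e^{-\phi(t)}{\rm dvol}_{g(t)}\right)^{p/q},
\]
and the right-hand side tends to $0$ by the previous step. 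There is no genuine obstacle here; the corollary is essentially a repackaging of the preceding proposition together with the triangle inequality and the unit-mass normalization, and no new analytic input is required.
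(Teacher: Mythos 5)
Your argument is correct and is precisely the one the paper leaves implicit (the corollary is stated without a written proof as an immediate consequence of the preceding proposition and the definition of $r^m_\infty$). Both steps — the elementary inequality $|a+b|^p \leqslant 2^{p-1}(|a|^p+|b|^p)$ to swap $r^m_\phi(t)$ for $r^m_\infty$, and the monotonicity of $L^p$ norms on the normalized measure $e^{-\phi(t)}{\rm dvol}_{g(t)}$ to extend from $\max\{\frac{n+m}{2},2\}<p<\frac{n+m+2}{2}$ down to all $1<p$ — are exactly what is needed, and nothing more.
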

The proof of the main result will be based on the following proposition. The proof of this critical proposition will occupy Section 4.
\begin{prop}\label{cri}
Let $\{ t_i: i\in \N \}$ be a sequence of times such that $t_i\to \infty$ as $i\to \infty$. Then, we can find a real number $0<\gamma<1$ and a constant $C$ such that, after passing to a subsequence, we have
\begin{equation}
    r^m_{\phi(t_i)}-r^m_{\infty}\leqslant C\left(\int_M w(t_i)^{\frac{2(n+m)}{n+m-2}}|R^m_{\phi(t_i)}-r^m_{\infty}|^{\frac{2(n+m)}{n+m+2}}e^{-\phi_0} dV_{g_0}\right)^{\frac{n+m+2}{2(n+m)}(1+\gamma)}
\end{equation}
for all integers $i$ in that subsequence. 
\end{prop}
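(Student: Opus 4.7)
The plan is to adapt Brendle's spectral argument from the classical Yamabe flow \cite{Brendle05} to the weighted setting, establishing a Lojasiewicz--Simon-type estimate for the normalized weighted energy $E_{(g_0,\phi_0)}$ at a subsequential limit of the flow. First, I extract a limit of $w(t_i)$: by Corollary \ref{cor:p} together with the conformal identity $e^{-\phi(t_i)}\,{\rm dvol}_{g(t_i)} = w(t_i)^{2(n+m)/(n+m-2)} e^{-\phi_0}\,{\rm dvol}_{g_0}$, one has
$$\int_M w(t_i)^{\frac{2(n+m)}{n+m-2}}|R^m_\phi(t_i) - r^m_\infty|^p e^{-\phi_0}\,{\rm dvol}_{g_0}\longrightarrow 0$$
for any $1<p<(n+m+2)/2$. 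Combining this with the elliptic identity $\frac{4(n+m-1)}{n+m-2}L^m_{\phi_0} w(t_i) = R^m_\phi(t_i) w(t_i)^{(n+m+2)/(n+m-2)}$ and the volume normalization, an $L^p$-iteration plus a concentration--compactness argument (to exclude bubbling) should yield, after passing to a subsequence, smooth convergence $w(t_i)\to w_\infty$ to a positive smooth solution $w_\infty$ of
$$\tfrac{4(n+m-1)}{n+m-2}L^m_{\phi_0} w_\infty = r^m_\infty w_\infty^{(n+m+2)/(n+m-2)}.$$

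Next I would linearize the weighted Yamabe equation at $w_\infty$. The operator
$$\mathcal{L}_\infty \eta := \tfrac{4(n+m-1)}{n+m-2}L^m_{\phi_0}\eta - \tfrac{n+m+2}{n+m-2}r^m_\infty w_\infty^{4/(n+m-2)}\eta$$
is formally self-adjoint on $L^2(e^{-\phi_0}{\rm dvol}_{g_0})$, elliptic, with discrete real spectrum and finite-dimensional kernel $K$. Write $w(t_i)=w_\infty+\xi_i+\eta_i$ with $\xi_i\in K$ and $\eta_i\perp K$. On $K^\perp$, the spectral gap of $\mathcal{L}_\infty$ controls $\|\eta_i\|$ by $\|w(t_i)^{(n+m+2)/(n+m-2)}(R^m_\phi(t_i)-r^m_\infty)\|$ in appropriate dual norms, and a second-order Taylor expansion of $E_{(g_0,\phi_0)}$ at $w_\infty$ then yields quadratic control of the $\eta_i$-contribution to $r^m_\phi(t_i)-r^m_\infty$. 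By H\"older duality between $L^{2(n+m)/(n+m-2)}$ and $L^{2(n+m)/(n+m+2)}$ with respect to $e^{-\phi_0}{\rm dvol}_{g_0}$, this matches the right-hand side of the stated inequality with a power as close to $2$ as desired.

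The main obstacle is the finite-dimensional kernel $K$, on which $E_{(g_0,\phi_0)}$ degenerates to higher than quadratic order. If the set of critical points near $w_\infty$ forms a smooth manifold whose tangent at $w_\infty$ equals $K$ (the integrable case), I would project $w(t_i)$ onto this manifold, absorbing the $\xi_i$-contribution and reducing to the transverse analysis above. Otherwise I invoke the Lojasiewicz--Simon inequality for analytic functionals: a Lyapunov--Schmidt reduction furnishes a real-analytic reduced functional $\tilde E$ on $K$ whose classical Lojasiewicz exponent $\theta\in(0,1/2)$ translates to the desired $\gamma=\theta/(1-\theta)\in(0,1)$ in our estimate. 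The technical crux of Section 4 is verifying that the weighted energy and the weighted conformal Laplacian $L^m_{\phi_0}$ retain the analyticity and Fredholm structure required to apply the Lojasiewicz--Simon framework with the measure $e^{-\phi_0}{\rm dvol}_{g_0}$.
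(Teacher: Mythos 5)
Your proposal follows essentially the same route as the paper: extract a subsequential limit $w_\infty$, perform a Lyapunov--Schmidt reduction onto the finitely many low eigenmodes of the weighted linearized operator, apply the Lojasiewicz--Simon inequality to the real-analytic reduced energy, and convert the gradient term to the stated right-hand side via H\"older duality between $L^{2(n+m)/(n+m-2)}$ and $L^{2(n+m)/(n+m+2)}$. The only substantive simplification you miss is that since $\frac{n+m+2}{n+m-2}<\frac{n+2}{n-2}$ for $m>0$ the equation is subcritical, so the compactness of $\{w_i\}$ follows from standard elliptic theory and no concentration--compactness or bubbling analysis is needed.
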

Note that $\gamma$ and $C$ may depend on the sequence $\{ t_i: i\in \N \}$. The following result is an immediate consequence of Proposition \ref{cri}.
\begin{prop}\label{prop:1}
There exists real numbers $0<\gamma<1$ and $t_0>0$ such that
\begin{equation}
r^m_{\phi(t)}-r^m_{\infty}\leqslant \left(\int_M w(t)^{\frac{2(n+m)}{n+m-2}}|R^m_{\phi(t)}-r^m_{\infty}|^{\frac{2(n+m)}{n+m+2}}e^{-\phi_0}dV_{g_0}\right)^{\frac{n+m+2}{2(n+m)}(1+\gamma)}
\end{equation}
for all $t\geqslant t_0$.
\end{prop}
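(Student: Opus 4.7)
The plan is a proof by contradiction: negate the conclusion over all choices of $\gamma$ and $t_0$, use the negation to produce a divergent sequence of times along which the reverse inequality holds with $\gamma$ tending to zero, invoke Proposition \ref{cri} on this sequence, and clash the resulting upper bound with the convergence supplied by Corollary \ref{cor:p}.

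Concretely, suppose the conclusion of Proposition \ref{prop:1} fails. Then, specializing $\gamma = 1/k$ and $t_0 = k$ for each $k\in\N$, I can select times $t_k\geq k$ for which
\[ r^m_\phi(t_k) - r^m_\infty > I_k^{\frac{n+m+2}{2(n+m)}(1+1/k)}, \]
where $I_k$ abbreviates the weighted integral appearing on the right side of the statement. Since $t_k\to\infty$, Proposition \ref{cri} applies to the sequence $\{t_k\}$: after passing to a subsequence there exist $\gamma_*\in(0,1)$ and a constant $C$ with
\[ r^m_\phi(t_k) - r^m_\infty \leq C\,I_k^{\frac{n+m+2}{2(n+m)}(1+\gamma_*)}. \]
Chaining the two inequalities yields $I_k^{\frac{n+m+2}{2(n+m)}(1/k-\gamma_*)} < C$; once $k$ is large enough that $1/k<\gamma_*$, the exponent on the left becomes negative, which forces $I_k$ to remain uniformly bounded below by a strictly positive constant along the subsequence.

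To close the argument I need $I_k\to 0$. Using the identity $e^{-\phi(t)}\,{\rm dvol}_{g(t)} = w(t)^{\frac{2(n+m)}{n+m-2}}\,e^{-\phi_0}\,{\rm dvol}_{g_0}$ from Definition \ref{condef} together with the volume normalization, $I_k$ rewrites as
\[ I_k = \int_M |R^m_\phi(t_k) - r^m_\infty|^{\frac{2(n+m)}{n+m+2}}\,e^{-\phi(t_k)}\,{\rm dvol}_{g(t_k)}. \]
The exponent $p = \frac{2(n+m)}{n+m+2}$ satisfies $1<p<\frac{n+m+2}{2}$ as soon as $n+m>2$, a condition guaranteed by the standing hypotheses $n\geq 3$ and $m>0$. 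Corollary \ref{cor:p} then delivers $I_k\to 0$, contradicting the lower bound obtained in the previous paragraph. The only step requiring any care is the verification that $p$ lies in the admissible range of Corollary \ref{cor:p}; I anticipate no substantive obstacle beyond this routine exponent check, consistent with the paper's remark that Proposition \ref{prop:1} is an immediate consequence of Proposition \ref{cri}.
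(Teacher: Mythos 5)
Your proposal is correct and follows essentially the same route as the paper: negate the statement with $\gamma=1/k$, $t_0=k$ to extract a divergent sequence, apply Proposition \ref{cri} along it, and contradict the decay of $I_k$ furnished by Corollary \ref{cor:p} with $p=\tfrac{2(n+m)}{n+m+2}$. The only cosmetic difference is that you convert the chained inequality into a uniform positive lower bound on $I_k$, whereas the paper writes $1\leqslant C\,I_i^{\frac{n+m+2}{2(n+m)}(\gamma-1/i)}$ and lets the right-hand side tend to zero; these are the same argument.
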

\begin{proof}
Suppose this is not true. Then, there exists a sequence of times $\{ t_i: i\in \N \}$ such that $t_i\geqslant i$ and 
\begin{displaymath}
r^m_{\phi(t_i)}-r^m_{\infty}\geqslant \left(\int_M w(t_i)^{\frac{2(n+m)}{n+m-2}}|R^m_{\phi(t_i)}-r^m_{\infty}|^{\frac{2(n+m)}{n+m+2}}e^{-\phi_0}dV_{g_0}\right)^{\frac{n+m+2}{2(n+m)}(1+\frac{1}{i})}.
\end{displaymath}
We now apply Proposition \ref{cri} to this sequence $\{ t_i: i\in \N \}$. Hence, there exists an infinite subset $I\subset \N$, real numbers $0<\gamma<1$ and $C$ such that
\begin{displaymath}
r^m_{\phi(t_i)}-r^m_{\infty}\leqslant C\left(\int_M w(t_i)^{\frac{2(n+m)}{n+m-2}}|R^m_{\phi(t_i)}-r^m_{\infty}|^{\frac{2(n+m)}{n+m+2}}e^{-\phi_0} dV_{g_0}\right)^{\frac{n+m+2}{2(n+m)}(1+\gamma)}
\end{displaymath}
for all $i\in I$. Thus, we conclude that
\begin{displaymath}
1\leqslant C\left(\int_M w(t_i)^{\frac{2(n+m)}{n+m-2}}|R^m_{\phi(t_i)}-r^m_{\infty}|^{\frac{2(n+m)}{n+m+2}}e^{-\phi_0} dV_{g_0}\right)^{\frac{n+m+2}{2(n+m)}(\gamma-\frac{1}{i})}
\end{displaymath}
for all $i\in I$. 

On the other hand, using Corollary \ref{cor:p} with $p=\frac{2(n+m)}{n+m+2}<\frac{n+m+2}{2}$ and $w(t_i)^{\frac{2(n+m)}{n+m-2}}e^{-\phi_0} dV_{g_0}=e^{-\phi(t_i)} dV_{g(t_i)}$, we have
\begin{displaymath}
\lim_{i\to \infty} \int_M |R^m_{\phi(t_i)}-r^m_{\infty}|^{\frac{2(n+m)}{n+m+2}}e^{-\phi(t_i)} dV_{g(t_i)}=0.
\end{displaymath}
Therefore, if $i$ is sufficiently large, 
\begin{displaymath}
\begin{split}
    &\lim_{i\to \infty} \left(\int_M |R^m_{\phi(t_i)}-r^m_{\infty}|^{\frac{2(n+m)}{n+m+2}}e^{-\phi(t_i)} dV_{g(t_i)}\right)^{\frac{n+m+2}{2(n+m)}(\gamma-\frac{1}{i})}\\
    & \leqslant \lim_{i\to \infty} \left(\int_M |R^m_{\phi(t_i)}-r^m_{\infty}|^{\frac{2(n+m)}{n+m+2}}e^{-\phi(t_i)} dV_{g(t_i)}\right)^{\frac{n+m+2}{2(n+m)}\frac{\gamma}{2}}=0.
\end{split}
\end{displaymath}
 This is a contradiction.
\end{proof}
\begin{prop}\label{uni}
We have
\begin{equation}
    \int_0^{\infty}\left(\int_M w(t)^{\frac{2(n+m)}{n+m-2}}|R^m_{\phi(t)}-r^m_{\phi(t)}|^2 e^{-\phi_0} dV_{g_0}\right)^{\frac{1}{2}} dt \leqslant C.
\end{equation}
\end{prop}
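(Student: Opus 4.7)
The plan is a Lojasiewicz--Simon style argument: identify the time-derivative of the "energy" $u(t):=r^m_\phi(t)-r^m_\infty$ with the integrand in question, use Proposition \ref{prop:1} to obtain a gradient inequality, and then integrate. First I would rewrite the claim using the conformal identity $w(t)^{\frac{2(n+m)}{n+m-2}} e^{-\phi_0}{\rm dvol}_{g_0} = e^{-\phi(t)}{\rm dvol}_{g(t)}$, so that the integrand becomes $F(t)^{1/2}$, where
\begin{equation*}
F(t):=\int_M |R^m_\phi(t)-r^m_\phi(t)|^2 e^{-\phi(t)}{\rm dvol}_{g(t)}.
\end{equation*}
By (\ref{eq:r}), $F(t) = -\frac{2}{n+m-2}u'(t)$ with $u(t)\geqslant 0$ and $u(t)\to 0$ by Corollary \ref{cor:p}.

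Next I would convert Proposition \ref{prop:1} into a clean Lojasiewicz inequality relating $u(t)$ and $F(t)$. Using $\int_M e^{-\phi(t)}{\rm dvol}_{g(t)}=1$ and Hölder's inequality with exponents $\frac{n+m+2}{n+m}$ and $\frac{n+m+2}{2}$,
\begin{equation*}
\int_M |R^m_\phi(t)-r^m_\infty|^{\frac{2(n+m)}{n+m+2}} e^{-\phi(t)}{\rm dvol}_{g(t)} \leqslant \left(\int_M |R^m_\phi(t)-r^m_\infty|^2 e^{-\phi(t)}{\rm dvol}_{g(t)}\right)^{\frac{n+m}{n+m+2}}.
\end{equation*}
Expanding $|R^m_\phi-r^m_\infty|^2=|R^m_\phi-r^m_\phi|^2+(r^m_\phi-r^m_\infty)^2$ (the cross term vanishes after integration), the right-hand side equals $(F(t)+u(t)^2)^{\frac{n+m}{n+m+2}}$. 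Substituting into Proposition \ref{prop:1} yields, for $t\geqslant t_0$,
\begin{equation*}
u(t) \leqslant \bigl(F(t)+u(t)^2\bigr)^{\frac{1+\gamma}{2}}.
\end{equation*}
Since $u(t)\to 0$, the term $u(t)^2$ cannot dominate $F(t)$ for large $t$: otherwise $u(t)\leqslant C u(t)^{1+\gamma}$, contradicting $u(t)\to 0$. Hence for all $t$ beyond some $t_1$,
\begin{equation*}
u(t)\leqslant C\,F(t)^{\frac{1+\gamma}{2}}, \qquad \text{equivalently}\qquad F(t)^{1/2}\geqslant c\,u(t)^{\frac{1}{1+\gamma}}.
\end{equation*}

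To conclude, I would use this lower bound on $F(t)^{1/2}$ to trade one factor of $F(t)^{1/2}$ for $u(t)^{-1/(1+\gamma)}$:
\begin{equation*}
F(t)^{1/2}=\frac{F(t)}{F(t)^{1/2}}\leqslant \frac{C\,F(t)}{u(t)^{1/(1+\gamma)}}=\frac{-C'\,u'(t)}{u(t)^{1/(1+\gamma)}}.
\end{equation*}
Because $0<\gamma<1$, the exponent $1/(1+\gamma)<1$ and the antiderivative $\frac{1+\gamma}{\gamma}u^{\gamma/(1+\gamma)}$ exists; integrating from $t_1$ to any $T$ gives
\begin{equation*}
\int_{t_1}^{T} F(t)^{1/2}\,dt\leqslant C''\bigl(u(t_1)^{\gamma/(1+\gamma)}-u(T)^{\gamma/(1+\gamma)}\bigr)\leqslant C''\,u(t_1)^{\gamma/(1+\gamma)},
\end{equation*}
independent of $T$. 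On the compact interval $[0,t_1]$ the integrand is bounded (by Lemma \ref{eq: ev} with $p=3$ together with the smoothness bounds from Section 2), so that part of the integral is also finite, and the assertion follows.

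The main obstacle is the extraction of the clean Lojasiewicz inequality $u(t)\leqslant CF(t)^{(1+\gamma)/2}$ from Proposition \ref{prop:1}; everything else is a standard ODE manipulation. The subtlety lies in handling the extra $(r^m_\phi-r^m_\infty)^2$ correction from the Hölder step and verifying it does not contaminate the gradient inequality in the asymptotic regime.
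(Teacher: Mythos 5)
Your argument is correct, and it coincides with the paper's proof up to and including the key step: both convert Proposition \ref{prop:1}, via H\"older with unit total measure, into the Lojasiewicz-type inequality $F(t)^{1/2}\geqslant c\,u(t)^{1/(1+\gamma)}$, where $u=r^m_{\phi}-r^m_{\infty}$ and $F=-\frac{2}{n+m-2}u'$ (the paper absorbs the $(r^m_{\phi}-r^m_{\infty})^{1+\gamma}$ correction by the same ``cannot dominate since $u\to 0$'' reasoning you use for the $u^2$ term). The two proofs diverge only in the concluding integration. The paper first integrates the ODE $u'\leqslant -c\,u^{2/(1+\gamma)}$ to get a polynomial decay rate for $u(t)$, then bounds $\int_T^{2T}F^{1/2}$ by Cauchy--Schwarz in time as $\bigl(T(u(T)-u(2T))\bigr)^{1/2}\leqslant CT^{-\gamma/(1-\gamma)}$ and sums over dyadic intervals. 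You instead use the classical Simon trick $F^{1/2}=F/F^{1/2}\leqslant -C\,u'/u^{1/(1+\gamma)}$ and integrate directly to $C\,u(t_1)^{\gamma/(1+\gamma)}$. Your route is shorter and sidesteps the explicit decay rate (where, incidentally, the paper's stated exponent $t^{-(1-\gamma)/(1+\gamma)}$ appears to be a typo for $t^{-(1+\gamma)/(1-\gamma)}$, as only the latter makes the dyadic sum converge); the paper's route yields the quantitative decay of $r^m_{\phi}(t)$ as a byproduct. The only points you should make explicit are the degenerate cases where $F(t)=0$ or $u(t)=0$ (then the flow is stationary from that time on and the tail integral vanishes, so the division is harmless), and that finiteness on $[0,t_1]$ follows simply from continuity of $F$ or from Cauchy--Schwarz plus monotonicity of $u$, without needing Lemma \ref{eq: ev}.
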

\begin{proof}
It follows from Proposition \ref{prop:1} that
\begin{displaymath}
\begin{split}
r^m_{\phi(t)}-r^m_{\infty}& \leqslant \left(\int_M w(t)^{\frac{2(n+m)}{n+m-2}}|R^m_{\phi(t)}-r^m_{\phi(t)}|^{\frac{2(n+m)}{n+m+2}}e^{-\phi_0} dV_{g_0}\right)^{\frac{n+m+2}{2(n+m)}(1+\gamma)} \\
& +C(r^m_{\phi(t)}-r^m_{\infty})^{1+\gamma},
\end{split}
\end{displaymath}
hence,
\begin{equation}
    r^m_{\phi(t)}-r^m_{\infty} \leqslant \left(\int_M w(t)^{\frac{2(n+m)}{n+m-2}}|R^m_{\phi(t)}-r^m_{\phi(t)}|^{\frac{2(n+m)}{n+m+2}}e^{-\phi_0} dV_{g_0}\right)^{\frac{n+m+2}{2(n+m)}(1+\gamma)}
\end{equation}
if $t$ is sufficiently large. Therefore, by H\"older's inequality and (\ref{eq:vol}), we have
\begin{equation}\label{decay}
    \begin{split}
    \frac{d}{d t}(r^m_{\phi(t)}&-r^m_{\infty}) =-\frac{n+m-2}{2}\int_M (R^m_{\phi(t)}-r^m_{\phi(t)})^2 w(t)^{\frac{2(n+m)}{n+m-2}}e^{-\phi_0}dV_{g_0}\\
    &\leqslant -\frac{n+m-2}{2} \left(\int_M w(t)^{\frac{2(n+m)}{n+m-2}}|R^m_{\phi(t)}-r^m_{\phi(t)}|^{\frac{2(n+m)}{n+m+2}}e^{-\phi_0} dV_{g_0}\right)^{\frac{n+m+2}{n+m}}\\
    & \leqslant -\frac{n+m-2}{2}(r^m_{\phi(t)}-r^m_{\infty})^{\frac{2}{1+\gamma}}.
    \end{split}
\end{equation}
This implies 
\begin{equation}
    \frac{d}{d t}(r^m_{\phi(t)}-r^m_{\infty})^{-\frac{1-\gamma}{1+\gamma}}\geqslant c.
\end{equation}
From this, it follows that if $t$ is sufficiently large,
\begin{equation}
    r^m_{\phi(t)}-r^m_{\infty}\leqslant Ct^{-\frac{1-\gamma}{1+\gamma}}.
\end{equation}
Moreover, integrating the first equality in (\ref{decay}) from $T$ to $2T$ yields
\begin{equation}
    r^m_{\phi(T)}-r^m_{\phi(2T)}=\frac{n+m-2}{2}\int_T^{2T}\int_M (R^m_{\phi(t)}-r^m_{\phi(t)})^2 w(t)^{\frac{2(n+m)}{n+m-2}}e^{-\phi_0}dV_{g_0} dt.
\end{equation}
Using H\"older's inequality, we obtain
\begin{equation}
    \begin{split}
        \int_T^{2T}&\left(\int_M (R^m_{\phi(t)}-r^m_{\phi(t)})^2 w(t)^{\frac{2(n+m)}{n+m-2}}e^{-\phi_0}dV_{g_0} \right)^{\frac{1}{2}}dt\\
        & \leqslant \left(T \int_T^{2T}\int_M (R^m_{\phi(t)}-r^m_{\phi(t)})^2 w(t)^{\frac{2(n+m)}{n+m-2}}e^{-\phi_0}dV_{g_0} dt \right)^{\frac{1}{2}}\\
        & \leqslant \left(\frac{2}{n+m-2}T(r^m_{\phi(T)}-r^m_{\phi(2T)})\right)^{\frac{1}{2}}\\
        & \leqslant CT^{-\frac{\gamma}{1-\gamma}}
    \end{split}
\end{equation}
if $T$ is sufficiently large. Since $0<\gamma<1$, we conclude that
\begin{equation}
    \begin{split}
        \int_0^{\infty}&(\int_M (R^m_{\phi(t)}-r^m_{\phi(t)})^2 w(t)^{\frac{2(n+m)}{n+m-2}}e^{-\phi_0}dV_{g_0} )^{\frac{1}{2}}dt\\
        & = \int_0^{1}(\int_M (R^m_{\phi(t)}-r^m_{\phi(t)})^2 w(t)^{\frac{2(n+m)}{n+m-2}}e^{-\phi_0}dV_{g_0} )^{\frac{1}{2}}dt\\
        & +\sum^{\infty}_{k=0}\int_{2^k}^{2^{k+1}}(\int_M (R^m_{\phi(t)}-r^m_{\phi(t)})^2 w(t)^{\frac{2(n+m)}{n+m-2}}e^{-\phi_0}dV_{g_0} )^{\frac{1}{2}}dt\\
        & \leqslant C\left(1+\sum_{k=0}^{\infty}2^{-\frac{\gamma}{1-\gamma}k}\right)\leqslant C.
    \end{split}
\end{equation}
This proves the assertion.
\end{proof}
\begin{prop}\label{prop2}
Given any $\eta_0>0$, we can find a real number $r>0$ such that
\begin{equation}
    \int_{B_r(x)}w(t)^{\frac{2(n+m)}{n+m-2}}e^{-\phi_0}dV_{g_0}\leqslant \eta_0
\end{equation}
for all $x\in M$ and $t\geqslant 0$.
\end{prop}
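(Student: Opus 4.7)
The plan is to derive a uniform-in-time continuity estimate for
\begin{displaymath}
f_{x,r}(t):=\int_{B_r(x)}w(t)^{\frac{2(n+m)}{n+m-2}}e^{-\phi_0}{\rm dvol}_{g_0},
\end{displaymath}
and then split the time axis into a short initial interval and a tail. First, I would differentiate using $\partial_t w=-\frac{n+m-2}{4}(R^m_\phi-r^m_\phi)w$, which gives
\begin{displaymath}
\frac{d}{dt}f_{x,r}(t)=-\frac{n+m}{2}\int_{B_r(x)}(R^m_\phi(t)-r^m_\phi(t))\,w(t)^{\frac{2(n+m)}{n+m-2}}e^{-\phi_0}{\rm dvol}_{g_0}.
\end{displaymath}
Cauchy--Schwarz, extending the first factor to all of $M$, then yields $|f_{x,r}'(t)|\leqslant\frac{n+m}{2}A(t)\,f_{x,r}(t)^{1/2}$, where
\begin{displaymath}
A(t):=\left(\int_M(R^m_\phi(t)-r^m_\phi(t))^2\,w(t)^{\frac{2(n+m)}{n+m-2}}e^{-\phi_0}{\rm dvol}_{g_0}\right)^{1/2}
\end{displaymath}
is exactly the $L^2$-quantity whose time integral is controlled in Proposition~\ref{uni}. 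Equivalently, $\left|\frac{d}{dt}f_{x,r}(t)^{1/2}\right|\leqslant\frac{n+m}{4}A(t)$.

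Next, I would exploit Proposition~\ref{uni} to pick $T_0$ so large that $\frac{n+m}{4}\int_{T_0}^\infty A(s)\,ds\leqslant\frac{1}{2}\sqrt{\eta_0}$. On the short interval $[0,T_0]$, Proposition~\ref{bound} supplies a uniform bound $\sup_{[0,T_0]\times M}w\leqslant C(T_0)$, and $\phi_0$ is bounded on the compact manifold $M$; hence the integrand of $f_{x,r}$ is uniformly bounded above, and by compactness of $M$ I may then choose $r>0$ small enough that $f_{x,r}(t)\leqslant\eta_0/4$ for every $x\in M$ and every $t\in[0,T_0]$. In particular, $f_{x,r}(T_0)^{1/2}\leqslant\frac{1}{2}\sqrt{\eta_0}$ uniformly in $x$.

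For $t\geqslant T_0$, integrating the square-root differential inequality yields
\begin{displaymath}
f_{x,r}(t)^{1/2}\leqslant f_{x,r}(T_0)^{1/2}+\frac{n+m}{4}\int_{T_0}^t A(s)\,ds\leqslant \sqrt{\eta_0},
\end{displaymath}
so $f_{x,r}(t)\leqslant\eta_0$ on $[T_0,\infty)$ as well, and the statement follows. I do not foresee a serious obstacle beyond bookkeeping: the only mildly delicate point is that the chain rule expression for $\frac{d}{dt}f_{x,r}^{1/2}$ is \emph{a priori} problematic where $f_{x,r}$ vanishes, but Proposition~\ref{bound} guarantees $w(t)>0$ pointwise, so $f_{x,r}(t)>0$ and the square-root inequality is legitimate.
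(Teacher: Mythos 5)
Your argument is correct and follows essentially the same strategy as the paper: split time at a large $T_0$ using the integrability of $A(t)$ from Proposition~\ref{uni} to control the tail, use Proposition~\ref{bound} plus compactness of $M$ to pick $r$ small on the finite interval $[0,T_0]$, and then integrate the time-derivative of the local weighted volume after Cauchy--Schwarz. The only (immaterial) difference is that the paper bounds $f_{x,r}'(t)\leqslant\frac{n+m}{2}A(t)$ directly by using the normalization $\int_{B_r(x)}e^{-\phi(t)}{\rm dvol}_{g(t)}\leqslant\int_M e^{-\phi(t)}{\rm dvol}_{g(t)}=1$, whereas you keep the factor $f_{x,r}(t)^{1/2}$ and integrate the square-root differential inequality $(f_{x,r}^{1/2})'\leqslant\frac{n+m}{4}A(t)$; both are valid and yield the same conclusion.
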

\begin{proof}
We can find a real number $T>0$ such that
\begin{equation}
    \int_T^{\infty}\left(\int_M (R^m_{\phi(t)}-r^m_{\phi(t)})^2 w(t)^{\frac{2(n+m)}{n+m-2}}e^{-\phi_0}dV_{g_0} \right)^{\frac{1}{2}}dt\leqslant \frac{\eta_0}{n}.
\end{equation}
We now choose a real number $r>0$ such that
\begin{equation}
    \int_{B_r(x)}w(t)^{\frac{2(n+m)}{n+m-2}}e^{-\phi_0}dV_{g_0}\leqslant \frac{\eta_0}{2}
\end{equation}
for all $x\in M$ and $0\leqslant t\leqslant T$. Using the evolution equation (\ref{eq:vol}) and H\"older's inequality, we have
\begin{equation}\label{eq:vol2}
\begin{split}
    \frac{d}{dt}\int_{B_r(x)}e^{-\phi(t)}dV_{g(t)}&=\frac{n+m}{2}\int_{B_r(x)} (r^m_{\phi(t)}-R^m_{\phi(t)})e^{-\phi(t)}dV_{g(t)}\\
    &\leqslant \frac{n+m}{2}\left(\int_M (R^m_{\phi(t)}-r^m_{\phi(t)})^2 e^{-\phi(t)}dV_{g(t)} \right)^{\frac{1}{2}}.
\end{split}
\end{equation}
Integrating (\ref{eq:vol2}) from $T$ to $t$ yields
\begin{equation}
    \begin{split}
       \int_{B_r(x)}&w(t)^{\frac{2(n+m)}{n+m-2}}e^{-\phi_0}dV_{g_0}\leqslant \int_{B_r(x)}w(T)^{\frac{2(n+m)}{n+m-2}}e^{-\phi_0}dV_{g_0}\\
       & +\frac{n+m}{2}\int_T^{\infty}\left(\int_M (R^m_{\phi(t)}-r^m_{\phi(t)})^2 w(t)^{\frac{2(n+m)}{n+m-2}}e^{-\phi_0}dV_{g_0} \right)^{\frac{1}{2}}dt\leqslant \eta_0
    \end{split}
\end{equation}
for all $x\in M$ and $t\geqslant T$. This proves the assertion.
\end{proof}
\begin{lem}\label{lem1}
Let $p=\frac{2(n+m)}{n+m-2}$ and $q>\frac{n}{2}$. There are positive constants $\eta_1$, $C$ such that if 
\begin{equation}
\begin{cases}
&g=w^{\frac{4}{n+m-2}}g_0, \\
&e^{-\phi}=w^{\frac{2m}{n+m-2}}e^{-\phi_0},
\end{cases}
\end{equation}
is conformal to $(M^n, g_0, e^{-\phi_0}dV_{g_0}, m)$ and
\begin{align*}
    &\int_{B_r(x)}e^{-\phi}dV_{g}\leqslant 1,\\
    &\int_{B_r(x)}|R^m_{\phi}|^qe^{-\phi}dV_{g}\leqslant \eta_1,
\end{align*}
where $B_r(x)$ is the ball with respect to $g_0$ with $r<1$, then
\begin{equation}
    w(x)\leqslant Cr^{-\frac{n}{p}}\left( \int_{B_r(x)}e^{-\phi}dV_{g}\right)^{\frac{1}{p}}.
\end{equation}
\end{lem}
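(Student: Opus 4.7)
The plan is to run a Moser iteration on the semilinear elliptic equation for $w$ supplied by (\ref{eq:conf}), namely
\[
-\frac{4(n+m-1)}{n+m-2}\Delta_{\phi_0} w + R^m_{\phi_0} w = R^m_\phi \, w^{p-1},
\]
together with the conformal identity $w^p \, e^{-\phi_0} dvol_{g_0} = e^{-\phi} dvol_g$ that converts the two weighted measures. Since $p-1 = (n+m+2)/(n+m-2) < (n+2)/(n-2)$ whenever $m>0$, this equation is strictly subcritical with respect to the classical Sobolev embedding $W^{1,2}(M,g_0) \hookrightarrow L^{2^*}(M,g_0)$, $2^* = 2n/(n-2)$; and because $e^{-\phi_0}$ is smooth and strictly positive on the compact manifold $M$, this Sobolev inequality holds, up to multiplicative constants, with respect to $e^{-\phi_0} dvol_{g_0}$ as well. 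The hypothesis $q > n/2$ is exactly the condition that makes the Hölder conjugate $q' = q/(q-1)$ satisfy $q' < 2^*/2$, which is what drives the iteration.

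First I derive a Caccioppoli-type inequality. For $s \geq p$ and a cutoff $\eta$ supported in $B_r(x)$, I multiply the equation by $\eta^2 w^{s-1}$ and integrate against $e^{-\phi_0} dvol_{g_0}$. Setting $u := w^{s/2}$ and absorbing the bounded $R^m_{\phi_0}$ contribution and the Cauchy--Schwarz cross term, one obtains
\[
\int \eta^2 |\nabla u|^2 \, e^{-\phi_0} dvol_{g_0} \leq C(s) \int (|\nabla \eta|^2 + \eta^2) u^2 \, e^{-\phi_0} dvol_{g_0} + C(s) \int R^m_\phi \, \eta^2 w^{s-2} e^{-\phi} dvol_g,
\]
where the nonlinear term has been rewritten against $e^{-\phi} dvol_g$ via the conformal identity. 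Hölder's inequality with exponents $q$, $q'$ and the smallness assumption bound the nonlinear term by $\eta_1^{1/q} \bigl(\int (\eta^2 w^{s-2})^{q'} e^{-\phi} dvol_g\bigr)^{1/q'}$. Converting the inner integral back to $e^{-\phi_0} dvol_{g_0}$ introduces a factor $w^{p-2q'}$; a further Hölder interpolation between $\|\eta u\|_{L^p}$ (controlled by the normalization $\int_{B_r(x)} e^{-\phi} dvol_g \leq 1$) and $\|\eta u\|_{L^{2^*}}$ brings the contribution into the form $\varepsilon\|\eta u\|_{L^{2^*}}^2$ for some $\varepsilon$ proportional to a positive power of $\eta_1$. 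Choosing $\eta_1$ small and then applying the Sobolev inequality on the left side of the Caccioppoli estimate absorbs this term, yielding
\[
\Bigl(\int (\eta u)^{2^*} e^{-\phi_0} dvol_{g_0}\Bigr)^{2/2^*} \leq C(s) \int (|\nabla \eta|^2 + \eta^2) u^2 \, e^{-\phi_0} dvol_{g_0},
\]
a reverse Hölder inequality with integrability gain $s \mapsto s \cdot 2^*/2$.

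Second, I iterate. Starting from $s_0 = p$ (for which $\int_{B_r(x)} w^{s_0} e^{-\phi_0} dvol_{g_0} = \int_{B_r(x)} e^{-\phi} dvol_g \leq 1$ is controlled by the hypothesis) and using nested cutoffs supported in concentric balls whose radii shrink geometrically from $r$ to $r/2$, the standard Moser bookkeeping (the product of the constants $C(s_k)$ converges, since the exponents $s_k = p\,(2^*/2)^k$ grow geometrically) gives after finitely many steps
\[
\sup_{B_{r/2}(x)} w \leq C r^{-n/p} \Bigl(\int_{B_r(x)} w^p e^{-\phi_0} dvol_{g_0}\Bigr)^{1/p} = C r^{-n/p} \Bigl(\int_{B_r(x)} e^{-\phi} dvol_g\Bigr)^{1/p},
\]
which is the desired estimate, valid in particular at the centre $x$. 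The principal technical obstacle is the absorption step: one must carefully track the negative powers of $w$ introduced when converting between the two weighted measures via Hölder, and the interpolation closes precisely because of the strict inequality $p \leq 2^*$ (with equality only at $m = 0$); this is exactly what makes the classical threshold $q > n/2$ sufficient and lets the argument reduce to \cite[Proposition A.2]{Brendle05} in the unweighted limit.
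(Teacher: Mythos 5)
Your route is genuinely different from the paper's. The paper follows Brendle's Proposition A.1: it chooses a radius $r_0$ and a point $x_0$ maximizing $s\mapsto (r-s)^{\frac{n}{p}}\sup_{B_s(x)}w$, applies a single interior $L^\infty$ estimate for the linear operator on a ball around $x_0$, and then removes the nonlinear term (which enters as a power $(p-1)-\frac{p}{q}$ of $s^{\frac{n}{p}}w(x_0)$) by a continuity argument once $\eta_1$ is small. A Moser iteration is a legitimate alternative, and you have correctly identified the two structural facts that make it conceivable: $q>\frac{n}{2}$ is equivalent to $q'<\frac{2^*}{2}$, and $p<2^*$ for $m>0$.

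However, the absorption step as you state it does not close. After converting measures, the nonlinear term is $\eta_1^{1/q}\bigl(\int(\eta u)^{2q'}w^{p-2q'}e^{-\phi_0}{\rm dvol}_{g_0}\bigr)^{1/q'}$ with $u=w^{s/2}$, and you propose to interpolate between $\|\eta u\|_{L^p}$, ``controlled by the normalization,'' and $\|\eta u\|_{L^{2^*}}$. But $\|\eta u\|_{L^p}^p=\int \eta^p w^{sp/2}e^{-\phi_0}{\rm dvol}_{g_0}$, which the hypothesis $\int_{B_r(x)}w^p e^{-\phi_0}{\rm dvol}_{g_0}\leqslant 1$ does not control for any $s>2$ --- in particular not at your starting exponent $s_0=p$. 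What the induction actually supplies is $\|\eta u\|_{L^2}$; the leftover factor $w^{p-2q'}$ (which vanishes only when $q=\frac{n+m}{2}$, i.e.\ $p=2q'$) must be H\"oldered separately against $\int_{B_r(x)}w^p e^{-\phi_0}{\rm dvol}_{g_0}\leqslant 1$, and one then checks that the exponent landing on $\|\eta u\|_{L^{2^*}}$ after raising to the power $\frac{1}{q'}$ is strictly below $2$ precisely because $p<2^*$. Even after this repair, the resulting reverse H\"older inequality is homogeneous of degree $s$ in $w$ only when $q=\frac{n+m}{2}$; for other $q$ the lower-order terms carry a different homogeneity, and you must verify that the iteration still outputs the stated bound $w(x)\leqslant Cr^{-\frac{n}{p}}\bigl(\int_{B_r(x)}e^{-\phi}{\rm dvol}_g\bigr)^{\frac{1}{p}}$ rather than merely $\sup_M w\leqslant C$. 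The paper's point-picking argument yields the homogeneous form for every $q>\frac{n}{2}$ in one step; as written, your iteration does not.
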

\begin{proof}
By the smoothness of the conformal factor $w(t)$, there exists $r_0$ a real number such that $r_0<r$ and
\begin{displaymath}
(r-s)^{\frac{n}{p}}\sup_{B_s(x)} w\leqslant (r-r_0)^{\frac{n}{p}}\sup_{B_{r_0}(x)} w
\end{displaymath}
for all $s<r$. Moreover, we choose a point $x_0\in \overline{B_{r_0}(x)}$ such that
\begin{displaymath}
\sup_{B_{r_0}(x)} w=w(x_0).
\end{displaymath}
Notice that in (\ref{eq:conf}), the conformal weighted Laplacian $L^m_{\phi_0}$ has the same second-order terms as the classical Laplacian $\Delta_{g_0}$. The difference only occurs on lower order terms. Using a standard interior estimate for linear elliptic equations in \cite[Theorem 8.17]{DN01}, we obtain
\begin{equation}
\begin{split}
s^{\frac{n}{p}}w(x_0)&\leqslant C\left(\int_{B_s(x_0)}w^{p}e^{-\phi_0}dV_{g_0}\right)^{\frac{1}{p}}\\
& +Cs^{\frac{n}{p}+2-\frac{n}{q}}\left(\int_{B_s(x_0)}\left|\frac{4(n+m-1)}{(n+m-2)}L^m_{\phi_0}w\right|^q e^{-\phi_0}dV_{g_0}\right)^{\frac{1}{q}}
\end{split}
\end{equation}
for $s\leqslant \frac{r-r_0}{2}$. From this, it follows that
\begin{equation}
\begin{split}
    s^{\frac{n}{p}}w(x_0)&\leqslant C\left(\int_{B_s(x_0)}e^{-\phi}dV_{g}\right)^{\frac{1}{p}}\\
    & +Cs^{\frac{n}{p}+2-\frac{n}{q}}\left(\int_{B_s(x_0)}w^{(p-1)q-p}|R^m_{\phi}|^q e^{-\phi}dV_{g} \right)^{\frac{1}{q}}
    \end{split}
\end{equation}
for $s\leqslant \frac{r-r_0}{2}$. By definition of $r_0$ and $x_0$, we have
\begin{equation}
    \sup_{B_{\frac{r-r_0}{2}}(x_0)} w\leqslant \sup_{B_{\frac{r+r_0}{2}}(x)} w \leqslant 2^{\frac{n}{p}} \sup_{B_{r_0}(x)} w=2^{\frac{n}{p}} w(x_0).
\end{equation}
Notice that $2-n+\frac{2n}{p}>0$ for $p=\frac{2(n+m)}{n+m-2}<\frac{2n}{n-2}$, and $s<r<1$. Hence, we can find a fixed constant $K$ such that
\begin{equation}\label{ineq:3.22}
    \begin{split}
    s^{\frac{n}{p}}w(x_0)&\leqslant K\left(\int_{B_s(x_0)}e^{-\phi}dV_{g}\right)^{\frac{1}{p}}\\
    & +K(s^{\frac{n}{p}}w(x_0))^{(p-1)-\frac{p}{q}}\left(\int_{B_s(x_0)}|R^m_{\phi}|^q e^{-\phi}dV_{g} \right)^{\frac{1}{q}}
    \end{split}
\end{equation}
for $s\leqslant \frac{r-r_0}{2}$. We now choose $\eta_1>0$ such that
\begin{displaymath}
(2K)^{(p-1)-\frac{p}{q}}\eta_1^{\frac{1}{q}}\leqslant \frac{1}{2}.
\end{displaymath}

We claim that $(\frac{r-r_0}{2})^{\frac{n}{p}}w(x_0)\leqslant 2K$. Indeed, if $(\frac{r-r_0}{2})^{\frac{n}{p}}w(x_0)\geqslant 2K$, then we may apply inequality (\ref{ineq:3.22}) with $s=(\frac{2K}{w(x_0)})^{\frac{p}{n}}\leqslant \frac{r-r_0}{2}$. This yields
\begin{displaymath}
\begin{split}
2K\leqslant & K\left(\int_{B_r(x_0)}e^{-\phi}dV_{g}\right)^{\frac{1}{p}}\\
& +K(2K)^{(p-1)-\frac{p}{q}} \left(\int_{B_r(x_0)}|R^m_{\phi}|^q e^{-\phi}dV_{g} \right)^{\frac{1}{q}},
\end{split}
\end{displaymath}
hence
\begin{displaymath}
2K\leqslant K+(2K)^{(p-1)-\frac{p}{q}}\eta_1^{\frac{1}{q}}.
\end{displaymath}

Using (\ref{ineq:3.22}) with $s=\frac{r-r_0}{2}$, we obtain
\begin{equation}
\begin{split}
(\frac{r-r_0}{2})^{\frac{n}{p}}w(x_0) & \leqslant  K\left(\int_{B_r(x_0)}e^{-\phi}dV_{g}\right)^{\frac{1}{p}}\\
& +K(2K)^{(p-2)-\frac{p}{q}} \left(\int_{B_r(x_0)}|R^m_{\phi}|^q e^{-\phi}dV_{g} \right)^{\frac{1}{q}}\cdot (\frac{r-r_0}{2})^{\frac{n}{p}}w(x_0).
\end{split}
\end{equation}
This implies
\begin{equation}
    \begin{split}
(\frac{r-r_0}{2})^{\frac{n}{p}}w(x_0) \leqslant & K\left(\int_{B_r(x_0)}e^{-\phi}dV_{g}\right)^{\frac{1}{p}}\\
& +\frac{1}{2}(2K)^{(p-1)-\frac{p}{q}} \eta_1^{\frac{1}{q}}\cdot (\frac{r-r_0}{2})^{\frac{n}{p}}w(x_0),
\end{split}
\end{equation}
hence
\begin{displaymath}
(\frac{r-r_0}{2})^{\frac{n}{p}}w(x_0) \leqslant 2 K\left(\int_{B_r(x_0)}e^{-\phi}dV_{g}\right)^{\frac{1}{p}}.
\end{displaymath}
Thus, we conclude that
\begin{displaymath}
r^{\frac{n}{p}}w(x)\leqslant (r-r_0)^{\frac{n}{p}}w(x_0)\leqslant 2^{\frac{n}{p}+1}K\left(\int_{B_r(x_0)}e^{-\phi}dV_{g}\right)^{\frac{1}{p}}.
\end{displaymath}
This proves the assertion.
\end{proof}
\begin{prop}\label{pro:bound}
The function $w(t)$ satisfies
\begin{equation}
    \sup_M w(t)\leqslant C
\end{equation}
and 
\begin{equation}
    \inf_M w(t)\geqslant c
\end{equation}
for all $t\geqslant 0$. Here, $C$ and $c$ are positive constants independent of $t$.
\end{prop}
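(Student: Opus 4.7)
The plan is to obtain the uniform upper bound on $w(t)$ by combining Lemma \ref{lem1}, Proposition \ref{prop2}, and Corollary \ref{cor:p}, using Proposition \ref{bound} on the initial compact time interval, and then to derive the lower bound from Lemma \ref{lem:Harnack} applied to a time-independent subelliptic inequality.

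For the upper bound, I would first fix an exponent $q$ with $\frac{n}{2} < q < \frac{n+m+2}{2}$; such $q$ exists because $n \geqslant 3$ and $m > 0$. Let $\eta_1$ be the constant from Lemma \ref{lem1}. By Corollary \ref{cor:p} (applied with this $q$, which satisfies $1 < q < \frac{n+m+2}{2}$), there exists $T_0 > 0$ with
\[
\int_M |R^m_\phi(t) - r^m_\infty|^q e^{-\phi(t)}{\rm dvol}_{g(t)} \leqslant \frac{\eta_1}{2^q}
\]
for all $t \geqslant T_0$. Since $0 < r^m_\infty \leqslant r^m_\phi(0)$, choose $\eta_0 \in (0,1]$ small enough so that $2^{q-1}(r^m_\phi(0))^q \eta_0 \leqslant \eta_1/2$, and let $r \in (0,1)$ be the corresponding radius from Proposition \ref{prop2}. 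Using the identity $\int_{B_r(x)} e^{-\phi(t)}{\rm dvol}_{g(t)} = \int_{B_r(x)} w(t)^{\frac{2(n+m)}{n+m-2}} e^{-\phi_0}{\rm dvol}_{g_0}$ together with the elementary splitting $|R^m_\phi|^q \leqslant 2^{q-1}(|R^m_\phi - r^m_\infty|^q + (r^m_\infty)^q)$, both hypotheses of Lemma \ref{lem1} are then satisfied uniformly in $x$ for every $t \geqslant T_0$. The lemma yields
\[
w(x,t) \leqslant C r^{-n/p} \eta_0^{1/p}
\]
with $p = \frac{2(n+m)}{n+m-2}$, which is a uniform upper bound on $[T_0,\infty)\times M$. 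Proposition \ref{bound} handles the interval $[0,T_0]$, giving $\sup_M w(t) \leqslant \bar{C}$ for all $t \geqslant 0$.

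For the lower bound, set $P := R^m_{\phi_0} + \sigma \bar{C}^{4/(n+m-2)}$, a fixed smooth function on $M$. Repeating the computation inside the proof of Proposition \ref{bound}, but with the time-independent constant $\bar{C}$ in place of $\sup_{[0,T]}\sup_M w$, gives
\[
-\frac{4(n+m-1)}{n+m-2}\Delta_{g_0,\phi_0} w(t) + P\, w(t) \geqslant (R^m_\phi(t) + \sigma)\, w(t)^{\frac{n+m+2}{n+m-2}} \geqslant 0
\]
for all $t \geqslant 0$. Since $P$ no longer depends on $t$, Lemma \ref{lem:Harnack} applies with a single constant and yields
\[
\int_M w(t)^{\frac{2(n+m)}{n+m-2}} e^{-\phi_0}{\rm dvol}_{g_0} \leqslant C\, \inf_M w(t)\, (\sup_M w(t))^{\frac{n+m+2}{n+m-2}}.
\]
The left-hand side equals $1$ by the normalization of the weighted volume, while the right-hand side is at most $C \bar{C}^{\frac{n+m+2}{n+m-2}} \inf_M w(t)$, which forces $\inf_M w(t) \geqslant c > 0$ uniformly in $t$.

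The delicate step is the upper bound: verifying the $L^q$ smallness hypothesis of Lemma \ref{lem1} uniformly in $(x,t)$ requires simultaneously controlling the oscillation $R^m_\phi - r^m_\infty$ globally (via the $L^q$ decay from Corollary \ref{cor:p}) and the local contribution of the constant $r^m_\infty$ (via the uniform small-ball estimate from Proposition \ref{prop2}). This balancing is possible only because the interval $\left(\tfrac{n}{2}, \tfrac{n+m+2}{2}\right)$ is non-empty, which is precisely where the hypothesis $n \geqslant 3$ (together with $m > 0$) enters. Once the upper bound is secured, the lower bound follows almost formally from the weak weighted Harnack inequality applied to a now-time-independent elliptic operator.
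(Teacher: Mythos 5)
Your proposal is correct and follows essentially the same route as the paper: the upper bound comes from verifying the hypotheses of Lemma \ref{lem1} uniformly in $(x,t)$ via Corollary \ref{cor:p} and Proposition \ref{prop2}, and the lower bound from Lemma \ref{lem:Harnack} applied to the now time-independent function $P$. The only (harmless) difference is in how the $L^q$ smallness of $R^m_\phi$ on small balls is checked: the paper uses H\"older's inequality with two exponents $\frac{n}{2}<q<p<\frac{n+m+2}{2}$ and the uniform $L^p$ bound, which works for all $t\geqslant 0$ at once, whereas you use the decay of $\int_M|R^m_\phi-r^m_\infty|^q$ for $t\geqslant T_0$ together with a splitting of $|R^m_\phi|^q$, and then invoke Proposition \ref{bound} on $[0,T_0]$.
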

\begin{proof}
Fix $\frac{n}{2}<q<p<\frac{n+m+2}{2}$. By Corollary \ref{cor:p}, we have
\begin{displaymath}
\int_{M}|R^m_{\phi(t)}|^pe^{-\phi(t)}dV_{g(t)}\leqslant C,
\end{displaymath}
for some constant $C$ independent of $t$. By Proposition \ref{prop2}, we can find a constant $r>0$ independent of $t$ such that
\begin{displaymath}
\int_{B_r(x)}e^{-\phi(t)}dV_{g(t)}\leqslant \eta_0
\end{displaymath}
for all $x\in M$ and $t\geqslant 0$. Using H\"{o}lder's inequality, we obtain
\begin{displaymath}
\begin{split}
\int_{B_r(x)}|R^m_{\phi(t)}|^qe^{-\phi(t)}dV_{g(t)}\leqslant \left({\rm Vol}(B_r(x))\right)^{\frac{p-q}{p}}\left(\int_{B_r(x)}|R^m_{\phi(t)}|^pe^{-\phi(t)}dV_{g(t)}\right)^{\frac{q}{p}}.
\end{split}
\end{displaymath}
Hence, if we choose $\eta_0$ sufficiently small, then we have
\begin{displaymath}
\int_{B_r(x)}|R^m_{\phi(t)}|^qe^{-\phi(t)}dV_{g(t)}\leqslant \eta_1
\end{displaymath}
for all $x\in M$ and $t\geqslant 0$. Here, $\eta_1$ is the constant appearing in Lemma \ref{lem1}. Applying Lemma \ref{lem1} at the maximum point of $w(t)$ over $M$, 
\begin{equation}
    \sup_{M}w(t)\leqslant Cr^{-\frac{n}{p}}\left( \int_{B_r(x)}e^{-\phi(t)}dV_{g(t)}\right)^{\frac{1}{p}}.
\end{equation}
By (\ref{eq:vol}), we conclude that $w(t)$ is uniformly bounded above. Hence, if we define
\begin{displaymath}
P=R^m_{\phi_0}+\sigma(\sup_{t\geqslant 0}\sup_M w(t))^{\frac{4}{n+m-2}},
\end{displaymath}
then we obtain
\begin{equation}
    \begin{split}
    & -\frac{4(n+m-1)}{n+m-2}\Delta_{ \phi_0} w(t)+Pw(t)\\
    & \geqslant -\frac{4(n+m-1)}{n+m-2}\Delta_{\phi_0} w(t)+R^m_{\phi_0}w(t)+\sigma w(t)^{\frac{n+m+2}{n+m-2}}\\
    & =(R^m_{\phi(t)}+\sigma)w(t)^{\frac{n+m+2}{n+m-2}}\geqslant 0.
    \end{split}
\end{equation}
By Lemma \ref{lem:Harnack} and (\ref{eq:vol}), we can find a positive constant $c$ independent of $t$ such that 
\begin{displaymath}
\inf_M w(t)(\sup_M w(t))^{\frac{n+m+2}{n+m-2}}\geqslant c
\end{displaymath}
for all $t\geqslant 0$. Since $\sup_M w(t)\leqslant C$, the assertion follows.
\end{proof}
\begin{prop}\label{Holder}
Let $0<\alpha< \min\{\frac{2m}{n+m},1\}$. Then, the function $w(t)$ satisfies 
\begin{equation}
    |w(x_1, t_1)-w(x_2, t_2)|\leqslant C((t_1-t_2)^{\frac{\alpha}{2}}+d(x_1, x_2)^{\alpha})
\end{equation}
for all $x_1, x_2\in M$ and $0<t_1-t_2<1$. Here, $C$ is a positive constant independent of $t_1$ and $t_2$.
\end{prop}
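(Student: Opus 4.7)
The plan is to mimic the Hölder estimate proved at the end of Section 2, but with constants that do not depend on $T$, by feeding in the two new pieces of information we now have available: the uniform upper and lower bounds on $w(t)$ from Proposition \ref{pro:bound}, and the uniform-in-$t$ $L^p$-bound on $R^m_\phi(t)$ coming from Corollary \ref{cor:p}. Concretely, I would fix $p$ with $\tfrac{n}{2}<p<\tfrac{n+m}{2}$ so that $\alpha = 2-\tfrac{n}{p}$ is any prescribed number in $\bigl(0,\tfrac{2m}{n+m}\bigr)$.

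First I would establish the spatial Hölder estimate. Corollary \ref{cor:p} gives a constant $C$, independent of $t$, with $\int_M |R^m_\phi(t)|^p \, e^{-\phi(t)}\,\mathrm{dvol}_{g(t)} \leqslant C$. Using the conformal change identity $e^{-\phi(t)}\,\mathrm{dvol}_{g(t)} = w(t)^{\frac{2(n+m)}{n+m-2}}e^{-\phi_0}\,\mathrm{dvol}_{g_0}$, together with the uniform lower bound on $w$ from Proposition \ref{pro:bound}, this translates into a uniform $L^p(e^{-\phi_0}\,\mathrm{dvol}_{g_0})$ bound on $R^m_\phi(t)$ as well. Combined with the conformal transformation law \eqref{eq:conf} and the uniform upper and lower bounds on $w(t)$, one obtains
\begin{equation*}
\int_M \Bigl| -\tfrac{4(n+m-1)}{n+m-2}\Delta_{\phi_0} w(t) + R^m_{\phi_0} w(t) \Bigr|^p e^{-\phi_0}\,\mathrm{dvol}_{g_0} \leqslant C
\end{equation*}
with $C$ independent of $t$. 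The Sobolev embedding $W^{2,p}(M)\hookrightarrow C^{0,\alpha}(M)$ then yields $|w(x_1,t)-w(x_2,t)|\leqslant C\,d(x_1,x_2)^{\alpha}$ uniformly in $t$.

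For the time regularity I would use the same averaging trick as in Section 2. From the conformal-factor evolution $\partial_t w = -\tfrac{n+m-2}{4}(R^m_\phi-r^m_\phi)w$, combined once more with the uniform bounds on $w$ and on $\|R^m_\phi(t)\|_{L^p}$, we obtain $\int_M |\partial_t w(t)|^p\, e^{-\phi_0}\,\mathrm{dvol}_{g_0} \leqslant C$ with $C$ independent of $t$. Then for $x\in M$ and $0<t_1-t_2<1$, I would estimate $|w(x,t_1)-w(x,t_2)|$ by first replacing it by the mean of $|w(y,t_1)-w(y,t_2)|$ over the ball $B_{\sqrt{t_1-t_2}}(x)$ (picking up an error of size $(t_1-t_2)^{\alpha/2}$ by the spatial Hölder bound), then writing this difference as $\int_{t_2}^{t_1}\partial_t w(y,t)\,dt$, and finally applying Hölder's inequality in space against the uniform $L^p$ bound on $\partial_t w$. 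The power of $(t_1-t_2)$ that comes out is exactly $(t_1-t_2)^{\alpha/2}$, giving the desired bound.

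The main point — really the only point — is that each step of the Section 2 argument is now available with a constant independent of $T$, because Proposition \ref{pro:bound} and Corollary \ref{cor:p} replace the previous $T$-dependent bounds with global-in-time bounds. There is no genuine obstacle beyond bookkeeping; the mild technical care required is in checking that switching between the weighted measures $e^{-\phi(t)}\,\mathrm{dvol}_{g(t)}$ and $e^{-\phi_0}\,\mathrm{dvol}_{g_0}$ only costs uniformly bounded constants, which is immediate from Proposition \ref{pro:bound}.
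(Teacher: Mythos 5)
Your proposal matches the paper's own argument: the paper proves this proposition by reusing the Hölder estimate from Section 2 verbatim, merely replacing the $T$-dependent bounds on $w$ and on the $L^p$-norm of $R^m_{\phi}$ with the global-in-time bounds from Proposition \ref{pro:bound} and Corollary \ref{cor:p}, then invoking $W^{2,p}\hookrightarrow C^{0,\alpha}$ for the spatial estimate and the ball-averaging device for the temporal one. This is precisely what you describe, so the proposal is correct and follows essentially the same route.
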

\begin{proof}
Let $\alpha=2-\frac{n}{p}$, where $\frac{n}{2}<p<\frac{n+m}{2}$, $m>0$. Using (\ref{cor:p}) and Proposition \ref{pro:bound}, we obtain
\begin{equation}
    \int_M \left|-\frac{4(n+m-1)}{n+m-2}\Delta_{\phi_0} w(t)+R^m_{\phi_0}w(t)\right|^p e^{-\phi_0}dV_{g_0}\leqslant C
\end{equation}
and
\begin{equation}
    \int_M \left|\frac{\partial}{\partial t}w(t)\right|^p e^{-\phi(t)}dV_{g(t)}\leqslant C
\end{equation}
where $C$ is a positive constant independent of $t$. By the embedding $W^{2,p}(M)\hookrightarrow C^{0.\alpha}(M)$, the first inequality implies that 
\begin{displaymath}
|w(x_1, t)-w(x_2, t)|\leqslant Cd(x_1, x_2)^{\alpha}
\end{displaymath}
for all $x_1, x_2\in M$ and all $t\geqslant 0$. Using the second inequality, we obtain
\begin{displaymath}
\begin{split}
    & |w(x, t_1)-w(x, t_2)|\\
    & \leqslant C(t_1-t_2)^{-\frac{n}{2}}\int_{B_{\sqrt{t_1-t_2}}(x)}|w(x, t_1)-w(x, t_2)|e^{-\phi_0}dV_{g_0}\\
    & \leqslant C(t_1-t_2)^{-\frac{n}{2}}\int_{B_{\sqrt{t_1-t_2}}(x)}|w(t_1)-w(t_2)|e^{-\phi_0}dV_{g_0}+C(t_1-t_2)^{\frac{\alpha}{2}}\\
    & \leqslant C(t_1-t_2)^{-\frac{n-2}{2}}\sup_{t_2\leqslant t\leqslant t_1}\int_{B_{\sqrt{t_1-t_2}}(x)}|\frac{\partial}{\partial t}w(t)|e^{-\phi_0}dV_{g_0}+C(t_1-t_2)^{\frac{\alpha}{2}}\\
    & \leqslant C(t_1-t_2)^{\frac{\alpha}{2}}\sup_{t_2\leqslant t\leqslant t_1}\left(\int_{B_{\sqrt{t_1-t_2}}(x)}|\frac{\partial}{\partial t}w(t)|^p e^{-\phi_0}dV_{g_0}\right)^{\frac{1}{p}}+C(t_1-t_2)^{\frac{\alpha}{2}}\\
    & \leqslant C (t_1-t_2)^{\frac{\alpha}{2}}.
\end{split}
\end{displaymath}
for all $x\in M$ and all $t_1, t_2$ satisfying $0<t_1-t_2<1$. This proves the assertion.
\end{proof}
In light of the foregoing argument at the end of Section 2, we derive uniform estimates for all higher order derivatives of $w(t)$, $t\geqslant 0$. The uniqueness of the asymptotic limit follows from Proposition \ref{uni}. This completes the proof of the main result.

\section{Proof of the Critical Proposition}
Let $\{ t_i: i\in \N \}$ be a sequence of times such that $t_i\to \infty$ as $i\to \infty$. For abbreviation, let $w_i=w(t_i)$.
The normalization condition implies that
\begin{displaymath}
\int_M e^{-\phi_i} dV_{g_i}=1, 
\end{displaymath}
where
\begin{equation}
\begin{cases}
&g_i=w_i^{\frac{4}{n+m-2}}g_0, \\
&e^{-\phi_i}=w_i^{\frac{2m}{n+m-2}}e^{-\phi_0}.
\end{cases}
\end{equation}
Hence
\begin{equation}\label{cond1}
    \int_M w_i^{\frac{2(n+m)}{n+m-2}} e^{-\phi_0} dV_{g_0}=1
\end{equation}
for all $i\in \N$. Moreover, it follows from Corollary \ref{cor:p} that
\begin{displaymath}
\int_M |R^m_{\phi(t_i)}-r^m_{\infty}|^{\frac{2(n+m)}{n+m+2}} e^{-\phi(t_i)}dV_{g(t_i)}\to 0,
\end{displaymath}
hence
\begin{equation}\label{cond2}
    \int_M \left|\frac{4(n+m-1)}{n+m-2}\Delta_{\phi_0} w_i-R^m_{\phi_0}w_i+r^m_{\infty}w_i^{\frac{n+m+2}{n+m-2}}\right|^{\frac{2(n+m)}{n+m+2}} e^{-\phi_0}dV_{g_0}\to 0
\end{equation}
as $i\to \infty$.

Using the standard elliptic theory, we have the following compactness result.
\begin{prop}
Let $\{ w_i: i\in \N \}$ be a sequence of positive functions satisfying (\ref{cond1}) and (\ref{cond2}). After passing to a subsequence if necessary, $\{ w_i: i\in \N \}$ converges to a positive smooth function $w_{\infty}$ satisfying the equation:
\begin{displaymath}
\frac{4(n+m-1)}{n+m-2}\Delta_{ \phi_0} w_{\infty}-R^m_{\phi}(0)w_{\infty}+r^m_{\infty}w_{\infty}^{\frac{n+m+2}{n+m-2}}=0.
\end{displaymath}
\end{prop}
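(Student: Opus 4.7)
The strategy is to combine the uniform two-sided bounds on $w_i$ already established in Proposition \ref{pro:bound} with the vanishing-residual condition (\ref{cond2}) to obtain uniform $W^{2,q}$ estimates, extract a convergent subsequence by Rellich--Kondrachov, identify the limit as a weak solution of the stated equation, and upgrade to smoothness by elliptic bootstrap.

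First I would rewrite (\ref{cond2}) as
\[
-\frac{4(n+m-1)}{n+m-2}\Delta_{\phi_0}w_i + R^m_{\phi_0}w_i = r^m_{\infty}w_i^{\frac{n+m+2}{n+m-2}} + \eta_i,
\]
where $\|\eta_i\|_{L^q}\to 0$ with $q=\frac{2(n+m)}{n+m+2}$, all norms taken against $e^{-\phi_0}{\rm dvol}_{g_0}$. Since Proposition \ref{pro:bound} applies at every $t_i\geqslant 0$, we have uniform bounds $c\leqslant w_i\leqslant C$, so $r^m_{\infty}w_i^{(n+m+2)/(n+m-2)}$ is uniformly bounded in $L^{\infty}$ and the entire right-hand side is uniformly bounded in $L^q$. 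Because $\Delta_{\phi_0}$ shares the principal part of the classical Laplacian and has smooth lower-order coefficients, standard $L^p$ elliptic regularity yields $\|w_i\|_{W^{2,q}}\leqslant C$ independent of $i$.

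Second, by Rellich--Kondrachov, a subsequence (still denoted $\{w_i\}$) converges strongly in $W^{1,q}$ and pointwise almost everywhere, and weakly in $W^{2,q}$, to some function $w_{\infty}$. The uniform pointwise bounds pass to the limit, so $c\leqslant w_{\infty}\leqslant C$ and in particular $w_\infty$ is positive. To pass to the limit in the equation, the linear left-hand side converges weakly in $L^q$; the nonlinear term $r^m_{\infty}w_i^{(n+m+2)/(n+m-2)}\to r^m_{\infty}w_{\infty}^{(n+m+2)/(n+m-2)}$ strongly in $L^q$ by dominated convergence (using the $L^\infty$ bound and pointwise convergence); and $\eta_i\to 0$ in $L^q$ by hypothesis. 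Hence $w_{\infty}$ solves the stated equation in the distributional sense. Smoothness then follows by elliptic bootstrap: since $w_{\infty}\geqslant c>0$, the power nonlinearity is smooth on the range of $w_{\infty}$, and iterated Schauder estimates upgrade $w_{\infty}$ from $W^{2,q}$ to $C^{\infty}$.

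The only subtlety is keeping track of the weighted measure in the elliptic estimates; but since $\phi_0$ is smooth and fixed, $\Delta_{\phi_0}$ differs from $\Delta_{g_0}$ only by a smooth first-order term, so the standard $L^p$ and Schauder estimates apply with obvious modifications. I do not foresee a more serious obstacle: once the uniform $L^{\infty}$ bound from Proposition \ref{pro:bound} is in hand, all of compactness, limit identification, and regularity reduce to standard linear elliptic theory. As a byproduct, one can bootstrap the convergence to obtain $w_i\to w_{\infty}$ in $C^k$ for every $k$.
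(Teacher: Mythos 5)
There is a genuine gap, and it is structural rather than technical: your argument is circular. You invoke Proposition \ref{pro:bound} (the uniform two-sided bounds $c\leqslant w(t)\leqslant C$ for all $t\geqslant 0$) as the starting point. But Proposition \ref{pro:bound} lives in Section 3, whose entire content is derived \emph{assuming} Proposition \ref{cri}; and Proposition \ref{cri} is proved in Section 4 \emph{using} the compactness statement you are asked to prove here. So the uniform bounds you take as input are downstream of the very proposition under consideration. This also shows up in the statement itself: the proposition is phrased for an arbitrary sequence of positive functions satisfying only (\ref{cond1}) and (\ref{cond2}), so the proof must run on those two hypotheses alone, without any global-in-time information about the flow.

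The correct input, which the paper uses (tersely), is the subcriticality of the exponent: since $m>0$, one has $\frac{n+m+2}{n+m-2}<\frac{n+2}{n-2}$. The normalization (\ref{cond1}) bounds $w_i$ in $L^{\frac{2(n+m)}{n+m-2}}$, hence bounds $w_i^{\frac{n+m+2}{n+m-2}}$ in $L^{\frac{2(n+m)}{n+m+2}}$; together with (\ref{cond2}) this bounds the right-hand side of $L^m_{\phi_0}w_i=\frac{n+m-2}{4(n+m-1)}\bigl(r^m_{\infty}w_i^{\frac{n+m+2}{n+m-2}}+\eta_i\bigr)$ in $L^{\frac{2(n+m)}{n+m+2}}$, and the subcritical exponent is exactly what makes the elliptic-regularity/Sobolev bootstrap gain integrability at each step, terminating in uniform $C^{1,\alpha}$ bounds with no appeal to Proposition \ref{pro:bound}. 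From that point on, your machinery (Rellich--Kondrachov, passage to the limit in the distributional equation, Schauder bootstrap for smoothness) is the right one. Note, however, that positivity of $w_{\infty}$ must then also be re-derived: without the uniform lower bound you lose $w_{\infty}\geqslant c$, so you need the strong convergence in $L^{\frac{2(n+m)}{n+m-2}}$ to carry the normalization (\ref{cond1}) to the limit (ruling out $w_{\infty}\equiv 0$), and then the weak Harnack inequality of Lemma \ref{lem:Harnack}, or the strong maximum principle applied to $L^m_{\phi_0}w_\infty = \frac{n+m-2}{4(n+m-1)}r^m_\infty w_\infty^{\frac{n+m+2}{n+m-2}}\geqslant 0$, to conclude $w_{\infty}>0$ everywhere.
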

\begin{proof}
Noticing that $\frac{n+m+2}{n+m-2}<\frac{n+2}{n-2}$, the assertion follows from the standard elliptic theory \cite[Section 8, Theorem 3]{Evans10}.
\end{proof}
In order to prove the critical proposition, we need the following results.
\begin{prop}\label{prop:com}
The exists a sequence of smooth functions $\{ \psi_{a}: a\in \N \}$ and a sequence of positive real numbers $\{ \lambda_a: a\in \N \}$ with the following properties:\\
(i) For every $a\in \N$, the function $\psi_a$ satisfies the equation
\begin{equation}
\frac{4(n+m-1)}{n+m-2}\Delta_{\phi_0} \psi_a-R^m_{\phi_0}\psi_a+\lambda_{a}w_{\infty}^{\frac{4}{n+m-2}}\psi_a=0.
\end{equation}
(ii) For all $a,b\in\N$, we have
\begin{equation}
\int_M w_{\infty}^{\frac{4}{n+m-2}}\psi_a\psi_b e^{-\phi_0}dV_{g_0}=\left\{
             \begin{array}{lr}
             0, & a\neq b \\
             1, & a=b
             \end{array}
\right..
\end{equation}
(iii) The span of $\{ \psi_{a}: a\in \N \}$ is dense in $L^2(M, e^{-\phi_0}dV_{g_0})$.\\
(iv) $\lambda_a\to\infty$ as $a\to \infty$.
\end{prop}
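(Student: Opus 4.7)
The plan is to recognize the weighted eigenvalue problem as the eigenvalue problem for the (rescaled) weighted conformal Laplacian on the limit smooth metric measure space, and then apply the classical spectral theorem for self-adjoint elliptic operators with compact resolvent. First I would pass to the limit metric measure space $(M^n, g_\infty, e^{-\phi_\infty}\mathrm{dvol}_{g_\infty}, m)$ determined by $g_\infty = w_\infty^{4/(n+m-2)}g_0$ and $e^{-\phi_\infty} = w_\infty^{2m/(n+m-2)}e^{-\phi_0}$. By Proposition 4.2 together with the Yamabe equation it solves, the conformal transformation law \eqref{eq:conf} gives $R^m_{\phi_\infty} = r^m_\infty$, a positive constant (positive because $r^m_\phi(t) \geqslant Y_{n,m}[(g_0,\phi_0)] > 0$ along the flow, and this lower bound persists in the limit).

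Next I would use the substitution $\psi = w_\infty \tilde\psi$ to convert the weighted eigenvalue equation into an unweighted one on the limit space. Applying the conformal covariance of the weighted conformal Laplacian
\begin{displaymath}
L^m_{\phi_0}(w_\infty \tilde\psi) = w_\infty^{(n+m+2)/(n+m-2)}\,L^m_{\phi_\infty}\tilde\psi,
\end{displaymath}
the equation in (i) becomes
\begin{displaymath}
\frac{4(n+m-1)}{n+m-2}\,L^m_{\phi_\infty}\tilde\psi_a = \lambda_a\,\tilde\psi_a,
\end{displaymath}
and the weighted inner product collapses to
\begin{displaymath}
\int_M w_\infty^{4/(n+m-2)}\psi_a\psi_b\,e^{-\phi_0}\mathrm{dvol}_{g_0} = \int_M \tilde\psi_a\tilde\psi_b\,e^{-\phi_\infty}\mathrm{dvol}_{g_\infty}.
\end{displaymath}
So the problem is reduced to the standard spectral theory of $\tfrac{4(n+m-1)}{n+m-2}L^m_{\phi_\infty}$ on $L^2(M, e^{-\phi_\infty}\mathrm{dvol}_{g_\infty})$.

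Then I would apply the standard spectral theory. The operator $\tfrac{4(n+m-1)}{n+m-2}L^m_{\phi_\infty}$ is a second-order elliptic operator on the compact manifold $M$ with smooth coefficients. Since the weighted Laplacian $\Delta_{\phi_\infty}$ is formally self-adjoint with respect to $e^{-\phi_\infty}\mathrm{dvol}_{g_\infty}$ and $R^m_{\phi_\infty} = r^m_\infty$ is a (positive) real multiplication operator, the associated symmetric bilinear form
\begin{displaymath}
B(\tilde\psi,\tilde\varphi) = \int_M\left(\tfrac{4(n+m-1)}{n+m-2}\langle\nabla\tilde\psi,\nabla\tilde\varphi\rangle_{g_\infty} + r^m_\infty\tilde\psi\tilde\varphi\right)e^{-\phi_\infty}\mathrm{dvol}_{g_\infty}
\end{displaymath}
on $H^1(M)$ is coercive (indeed $B(\tilde\psi,\tilde\psi) \geqslant r^m_\infty\|\tilde\psi\|_{L^2}^2$). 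By Lax--Milgram it defines a bounded inverse $T\colon L^2 \to H^1$, and the Rellich compact embedding $H^1(M)\hookrightarrow L^2(M, e^{-\phi_\infty}\mathrm{dvol}_{g_\infty})$ makes $T$ a compact self-adjoint operator on $L^2$. The spectral theorem then provides a complete orthonormal basis $\{\tilde\psi_a\}$ of eigenfunctions with eigenvalues $\lambda_a \to +\infty$; standard elliptic bootstrapping gives $\tilde\psi_a \in C^\infty(M)$, and consequently $\psi_a = w_\infty \tilde\psi_a \in C^\infty(M)$. Positivity of each $\lambda_a$ is automatic from the Rayleigh quotient characterization $\lambda_a \geqslant \lambda_1 \geqslant r^m_\infty > 0$. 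Finally, the denseness claim (iii) follows because the map $\tilde\psi\mapsto w_\infty\tilde\psi$ is a bounded bijection between the $L^2$ spaces with weights $e^{-\phi_\infty}\mathrm{dvol}_{g_\infty}$ and $e^{-\phi_0}\mathrm{dvol}_{g_0}$ (the weight $w_\infty$ is bounded above and below by Proposition \ref{pro:bound} passed to the limit), so the orthonormal basis $\{\tilde\psi_a\}$ transfers to a dense family $\{\psi_a\}$ in $L^2(M, e^{-\phi_0}\mathrm{dvol}_{g_0})$.

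The only mildly delicate point will be verifying the conformal covariance identity $L^m_{\phi_0}(w_\infty\tilde\psi) = w_\infty^{(n+m+2)/(n+m-2)}L^m_{\phi_\infty}\tilde\psi$ in full generality, not just at $\tilde\psi\equiv 1$ (which is the content of \eqref{eq:conf}); this is a direct computation from the definitions of $\Delta_{\phi}$ and the conformal change formulas, and is recorded in \cite{Case15}. Everything else is the standard package of Lax--Milgram, Rellich compactness and the spectral theorem for compact self-adjoint operators.
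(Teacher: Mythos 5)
Your proposal is correct and is essentially the paper's argument: the paper simply notes that the operator $\psi\mapsto w_\infty^{-4/(n+m-2)}\bigl(\tfrac{4(n+m-1)}{n+m-2}\Delta_{g_0,\phi_0}\psi-R^m_{\phi_0}\psi\bigr)$ is symmetric with respect to the inner product $\int_M w_\infty^{4/(n+m-2)}\psi_1\psi_2\,e^{-\phi_0}{\rm dvol}_{g_0}$ and invokes the spectral theorem, which is exactly what you do after conjugating by $w_\infty$ to pass to the limit gauge $(g_\infty,\phi_\infty)$ where $R^m_{\phi_\infty}=r^m_\infty$ is a positive constant. Your version fills in what the paper leaves implicit (Lax--Milgram, Rellich compactness, why each $\lambda_a$ is positive via the Rayleigh quotient and $r^m_\infty\geqslant Y_{n,m}[(g_0,\phi_0)]>0$), while the paper works directly in the original gauge without the conformal change of variables; the two are identical in content.
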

\begin{proof}
Consider the linear operator
\begin{displaymath}
\psi\mapsto w_{\infty}^{-\frac{4}{n+m-2}}\left(\frac{4(n+m-1)}{n+m-2}\Delta_{ \phi_0} \psi-R^m_{\phi_0}\psi\right).
\end{displaymath}
This operator is symmetric with respect to the inner product
\begin{displaymath}
(\psi_1,\psi_2)\mapsto \int_M w_{\infty}^{\frac{4}{n+m-2}} \psi_1\psi_2 e^{-\phi_0}dV_{g_0}
\end{displaymath}
on $L^2(M, e^{-\phi_0}dV_{g_0})$. Hence, the assertion follows from the spectral theorem.
\end{proof}
Let $A$ be a maximal finite subset of $\N$ such that $\lambda_a\leqslant\frac{n+m+2}{n+m-2}r^m_{\infty}$ for all $a\in A$. We denote by $\Pi$ the projection operator
\begin{equation}
\begin{split}
    \Pi f & =\sum_{a\notin A}\left(\int_M \psi_a f e^{-\phi_0}dV_{g_0}\right)w_{\infty}^{\frac{4}{n+m-2}} \psi_a\\
    & =f-\sum_{a\in A}\left(\int_M \psi_a f e^{-\phi_0}dV_{g_0}\right)w_{\infty}^{\frac{4}{n+m-2}} \psi_a
    \end{split}
\end{equation}
In the rest of this section, for simplicity, we denote $W^{1,2}(M, e^{-\phi_0}dV_{g_0})$ and $L^{p}(M, e^{-\phi_0}dV_{g_0})$ by $W^{1,2}(M)$ and $L^{p}(M)$, respectively.
\begin{lem}\label{lem2}
For every $1\leqslant p<\infty$, we can find a constant $C$ such that
\begin{equation}
\begin{split}
    \| f\|_{L^p(M)}&\leqslant C\left\| \frac{4(n+m-1)}{n+m-2}\Delta_{\phi_0} f-R^m_{\phi_0}f+\frac{n+m+2}{n+m-2}r^m_{\infty}w_{\infty}^{\frac{4}{n+m-2}}f\right\|_{L^p(M)}\\
    & + C\sup_{a\in A}\left|\int_M w_{\infty}^{\frac{4}{n+m-2}}\psi_a f e^{-\phi_0}dV_{g_0}\right|.
    \end{split}
\end{equation}
\end{lem}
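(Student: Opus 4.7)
The plan is to argue by contradiction. Suppose no such constant $C$ exists for some fixed $1\le p<\infty$. Then there is a sequence $\{f_k\}\subset C^\infty(M)$ with $\|f_k\|_{L^p(M)}=1$ such that, writing
$$L:=\tfrac{4(n+m-1)}{n+m-2}\Delta_{\phi_0}-R^m_{\phi_0}+\tfrac{n+m+2}{n+m-2}r^m_{\infty}w_{\infty}^{\frac{4}{n+m-2}},$$
we have $\|Lf_k\|_{L^p(M)}\to 0$ and $\sup_{a\in A}\bigl|\int_M w_{\infty}^{\frac{4}{n+m-2}}\psi_a f_k\, e^{-\phi_0}{\rm dvol}_{g_0}\bigr|\to 0$ as $k\to\infty$. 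The goal is to extract an $L^p$-limit $f_\infty$ and reach a contradiction through the spectral theory of Proposition \ref{prop:com}.

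First I would upgrade the $L^p$ bound on $f_k$ to $W^{2,p}$. Rewriting the definition of $L$ as
$$\tfrac{4(n+m-1)}{n+m-2}\Delta_{\phi_0}f_k = Lf_k+R^m_{\phi_0}f_k-\tfrac{n+m+2}{n+m-2}r^m_{\infty}w_{\infty}^{\frac{4}{n+m-2}}f_k,$$
the right-hand side is bounded in $L^p$ because $R^m_{\phi_0}$, $w_\infty$, and $\phi_0$ are smooth on the compact manifold $M$ and $\|f_k\|_{L^p}=1$. Since $\Delta_{\phi_0}=\Delta-\inner{\nabla\phi_0}{\nabla\cdot}$ is uniformly elliptic with smooth coefficients, standard $L^p$-elliptic regularity gives $\|f_k\|_{W^{2,p}(M)}\le C$. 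The compact Sobolev embedding $W^{2,p}(M)\hookrightarrow L^p(M)$ then lets me pass to a subsequence converging strongly in $L^p$ to some $f_\infty$ with $\|f_\infty\|_{L^p(M)}=1$. Passing to the limit in $Lf_k\to 0$ yields $Lf_\infty=0$ distributionally, and elliptic regularity upgrades $f_\infty$ to a smooth function, so in particular $f_\infty\in L^2$.

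Next I would identify $f_\infty$ via the spectral decomposition. Using part (i) of Proposition \ref{prop:com}, a direct computation gives
$$L\psi_a=\left(\tfrac{n+m+2}{n+m-2}r^m_{\infty}-\lambda_a\right)w_{\infty}^{\frac{4}{n+m-2}}\psi_a.$$
Since $L$ is symmetric with respect to $e^{-\phi_0}{\rm dvol}_{g_0}$, pairing $Lf_\infty=0$ against $\psi_b$ shows
$$\left(\tfrac{n+m+2}{n+m-2}r^m_{\infty}-\lambda_b\right)\int_M w_{\infty}^{\frac{4}{n+m-2}}\psi_b f_\infty\, e^{-\phi_0}{\rm dvol}_{g_0}=0$$
for every $b\in\N$. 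Hence every Fourier coefficient of $f_\infty$ in the orthonormal basis $\{\psi_a\}$ vanishes outside the finite set $A$, so by the completeness statement (iii) one has $f_\infty\in\mathrm{span}\{\psi_a:a\in A\}$. On the other hand, since $w_\infty^{4/(n+m-2)}\psi_a$ is smooth and bounded, the $L^p$-convergence $f_k\to f_\infty$ and the vanishing-supremum hypothesis give $\int_M w_{\infty}^{\frac{4}{n+m-2}}\psi_a f_\infty\,e^{-\phi_0}{\rm dvol}_{g_0}=0$ for every $a\in A$, which via the orthonormality in (ii) forces $f_\infty=0$. This contradicts $\|f_\infty\|_{L^p(M)}=1$.

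The main technical point is the $L^p$-elliptic regularity together with Rellich compactness, which is what bridges the spectral ($L^2$-type) argument to the $L^p$-estimate for arbitrary $1\le p<\infty$; everything else is essentially a Fredholm-alternative deduction on the compact manifold. I do not anticipate genuine obstacles in dimension or regularity, since $M$ is compact, $w_\infty$ is smooth and bounded away from zero, and $L$ has smooth coefficients, so $W^{2,p}$-bootstrapping and eigenfunction expansions behave uniformly in $p$.
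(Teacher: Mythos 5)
Your proposal is correct and follows the same contradiction-plus-spectral argument as the paper: extract a limit $f_\infty$ with $\|f_\infty\|_{L^p}=1$, test $Lf_\infty=0$ against the eigenfunctions $\psi_a$, and use orthogonality and completeness to force $f_\infty=0$. The only difference is that you spell out the elliptic regularity and Rellich compactness behind the paper's terse ``by compactness,'' which is a welcome (and needed) elaboration rather than a different route.
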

\begin{proof}
Assume that is not true. By compactness, we can find a function $f\in L^p(M) $ satisfying $\| f\|_{L^p(M)}=1$,
\begin{equation}
    \int_M w_{\infty}^{\frac{4}{n+m-2}}\psi_a f e^{-\phi_0}dV_{g_0}=0
\end{equation}
for all $a\in A$ and
\begin{equation}
    \frac{4(n+m-1)}{n+m-2}\Delta_{\phi_0} f-R^m_{\phi_0}f+\frac{n+m+2}{n+m-2}r^m_{\infty}w_{\infty}^{\frac{4}{n+m-2}}f=0
\end{equation}
in the sense of distributions. Hence, if we use the function $\psi_a$ as a test function, then we obtain
\begin{displaymath}
(\lambda_a-\frac{n+m+2}{n+m-2}r^m_{\infty})\int_M w_{\infty}^{\frac{4}{n+m-2}}\psi_a f e^{-\phi_0}dV_{g_0}=0
\end{displaymath}
for all $a\in \N$. In particular, we have
\begin{displaymath}
\int_M w_{\infty}^{\frac{4}{n+m-2}}\psi_a f e^{-\phi_0}dV_{g_0}=0
\end{displaymath}
for all $a\notin A$. Thus, we conclude that $f=0$. This is a contradiction.
\end{proof}
\begin{lem}
~\\
(i) There exists a constant $C$ such that
\begin{equation}
    \begin{split}
    &\| f\|_{L^{\frac{n+m+2}{n+m-2}}(M)}\leqslant C\sup_{a\in A}\left|\int_M w_{\infty}^{\frac{4}{n+m-2}}\psi_a f e^{-\phi_0}dV_{g_0}\right|\\
    & +C\left\| \Pi\left(\frac{4(n+m-1)}{n+m-2}\Delta_{ \phi_0} f-R^m_{\phi_0}f+\frac{n+m+2}{n+m-2}r^m_{\infty}w_{\infty}^{\frac{4}{n+m-2}}f\right)\right\|_{L^{s}(M)},
    \end{split}
\end{equation}
where $s=\frac{n(n+m+2)}{n(n+m-2)+2(n+m+2)}$.\\
~\\
(ii) There exists a constant $C$ such that
\begin{equation}
    \begin{split}
    \| f\|_{L^1(M)}&\leqslant C\left\| \Pi\left(\frac{4(n+m-1)}{n+m-2}\Delta_{\phi_0} f-R^m_{\phi_0}f+\frac{n+m+2}{n+m-2}r^m_{\infty}w_{\infty}^{\frac{4}{n+m-2}}f\right)\right\|_{L^{1}(M)}\\
    & + C\sup_{a\in A}\left|\int_M w_{\infty}^{\frac{4}{n+m-2}}\psi_a f e^{-\phi_0}dV_{g_0}\right|.
    \end{split}
\end{equation}
\end{lem}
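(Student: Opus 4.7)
The plan is to decompose $f$ against the weighted $L^2$-basis from Proposition~\ref{prop:com} and reduce both estimates to Lemma~\ref{lem2}. Write $f = f_A + f_{A^c}$ with $f_A := \sum_{a\in A} c_a \psi_a$, where $c_a := \int_M w_\infty^{\frac{4}{n+m-2}}\psi_a f\, e^{-\phi_0}{\rm dvol}_{g_0}$, so that $f_{A^c}$ is orthogonal to each $\psi_a$, $a\in A$, with respect to the weighted inner product. For brevity denote
\begin{displaymath}
\mathcal{L}f := \frac{4(n+m-1)}{n+m-2}\Delta_{\phi_0}f - R^m_{\phi_0}f + \frac{n+m+2}{n+m-2}r^m_\infty w_\infty^{\frac{4}{n+m-2}}f.
\end{displaymath}
By self-adjointness of $\mathcal{L}$ with respect to $e^{-\phi_0}{\rm dvol}_{g_0}$ and the eigenvalue equation in Proposition~\ref{prop:com}(i), one computes
\begin{displaymath}
\int_M \psi_a\,\mathcal{L}f\, e^{-\phi_0}{\rm dvol}_{g_0} = \Bigl(\tfrac{n+m+2}{n+m-2}r^m_\infty - \lambda_a\Bigr)c_a
\end{displaymath}
for every $a\in A$. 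This yields the key identity $\Pi\mathcal{L}f = \mathcal{L}f_{A^c}$, while $(I-\Pi)\mathcal{L}f$ is a fixed finite linear combination of the bounded smooth functions $w_\infty^{\frac{4}{n+m-2}}\psi_a$ satisfying $\|(I-\Pi)\mathcal{L}f\|_{L^p(M)} \leqslant C\sup_{a\in A}|c_a|$ for every $p$.

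Part (ii) follows at once: Lemma~\ref{lem2} with $p=1$ gives $\|f\|_{L^1}\leqslant C\|\mathcal{L}f\|_{L^1} + C\sup_{a\in A}|c_a|$, and the triangle inequality together with the bound on $(I-\Pi)\mathcal{L}f$ replaces $\|\mathcal{L}f\|_{L^1}$ by $\|\Pi\mathcal{L}f\|_{L^1}$ up to the same sup term.

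For part (i) I would bootstrap via Sobolev embedding and elliptic regularity. The exponent $s$ is chosen precisely so that $\frac{1}{s}-\frac{2}{n} = \frac{n+m-2}{n+m+2}$, which forces $W^{2,s}(M)\hookrightarrow L^{\frac{n+m+2}{n+m-2}}(M)$. Since the weighted Laplacian $\Delta_{\phi_0}$ has the same principal symbol as $\Delta$, standard Calderón-Zygmund theory yields $\|f_{A^c}\|_{W^{2,s}}\leqslant C\bigl(\|\mathcal{L}f_{A^c}\|_{L^s}+\|f_{A^c}\|_{L^s}\bigr) = C\bigl(\|\Pi\mathcal{L}f\|_{L^s}+\|f_{A^c}\|_{L^s}\bigr)$. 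Applying Lemma~\ref{lem2} with $p=s$ to $f$, combined with the splitting $\mathcal{L}f = \Pi\mathcal{L}f + (I-\Pi)\mathcal{L}f$ from part (ii), gives $\|f\|_{L^s}\leqslant C\|\Pi\mathcal{L}f\|_{L^s}+C\sup_{a\in A}|c_a|$, and hence the same bound for $\|f_{A^c}\|_{L^s}$. Since $f_A$ lives in a fixed finite-dimensional space of smooth functions, $\|f_A\|_{L^q}\leqslant C\sup_{a\in A}|c_a|$ for every $q\geqslant 1$. Summing the bounds for $f_A$ and $f_{A^c}$ in $L^{\frac{n+m+2}{n+m-2}}$ produces (i).

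The only nontrivial point is the algebraic identity $\Pi\mathcal{L}f = \mathcal{L}f_{A^c}$, which rests entirely on self-adjointness of $\mathcal{L}$ in the weighted $L^2$ inner product and the eigenvalue equation defining $\psi_a$. Once that is in place, both estimates reduce mechanically to Lemma~\ref{lem2}, and in (i) additionally to Sobolev embedding plus interior $L^s$ elliptic regularity at the correctly tuned exponent.
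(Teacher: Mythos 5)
Your argument is correct and follows essentially the same route as the paper: both rest on elliptic $W^{2,s}$ regularity together with the Sobolev embedding $W^{2,s}(M)\hookrightarrow L^{\frac{n+m+2}{n+m-2}}(M)$, the reduction to Lemma~\ref{lem2}, and the observation (via self-adjointness and the eigenvalue equation) that the $(I-\Pi)$-part of the differential expression applied to $f$ is a finite linear combination of $w_\infty^{\frac{4}{n+m-2}}\psi_a$ controlled by $\sup_{a\in A}\left|\int_M w_\infty^{\frac{4}{n+m-2}}\psi_a f\, e^{-\phi_0}{\rm dvol}_{g_0}\right|$. Your extra decomposition $f=f_A+f_{A^c}$ is a cosmetic reorganization of the paper's version, which applies elliptic regularity and Lemma~\ref{lem2} directly to $f$ and invokes the splitting only at the final step.
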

\begin{proof}
(i) It follows from the embedding $W^{2,s}(M)\hookrightarrow L^{\frac{n+m+2}{n+m-2}}(M)$ that
\begin{equation}
    \begin{split}
    \| f\|_{L^{\frac{n+m+2}{n+m-2}}(M)}&\leqslant  C\| f\|_{L^{s}(M)}\\
    & +C\left\| \frac{4(n+m-1)}{n+m-2}\Delta_{\phi_0} f-R^m_{\phi_0}f+\frac{n+m+2}{n+m-2}r^m_{\infty}w_{\infty}^{\frac{4}{n+m-2}}f\right\|_{L^{s}(M)}.
    \end{split}
\end{equation}
Using Lemma \ref{lem2}, we obtain
\begin{equation}
    \begin{split}
    \| f\|_{L^{\frac{n+m+2}{n+m-2}}(M)}&\leqslant C\sup_{a\in A}\left|\int_M w_{\infty}^{\frac{4}{n+m-2}}\psi_a f e^{-\phi_0}dV_{g_0}\right|\\
    & +C\left\| \frac{4(n+m-1)}{n+m-2}\Delta_{ \phi_0} f-R^m_{\phi_0}f+\frac{n+m+2}{n+m-2}r^m_{\infty}w_{\infty}^{\frac{4}{n+m-2}}f\right\|_{L^{s}(M)}.
    \end{split}
\end{equation}
By definition of $\Pi$, we have 
\begin{equation}
    \begin{split}
        \frac{4(n+m-1)}{n+m-2}&\Delta_{\phi_0} f-R^m_{\phi_0}f+\frac{n+m+2}{n+m-2}r^m_{\infty}w_{\infty}^{\frac{4}{n+m-2}}f\\
        =&\Pi\left(\frac{4(n+m-1)}{n+m-2}\Delta_{\phi_0} f-R^m_{\phi_0}f+\frac{n+m+2}{n+m-2}r^m_{\infty}w_{\infty}^{\frac{4}{n+m-2}}f\right)\\
        & -\sum_{a\in A}(\lambda_a-\frac{n+m+2}{n+m-2}r^m_{\infty})\left(\int_M w_{\infty}^{\frac{4}{n+m-2}}\psi_a f e^{-\phi_0}dV_{g_0}\right)w_{\infty}^{\frac{4}{n+m-2}}\psi_a.
    \end{split}
\end{equation}
This implies
\begin{equation}
    \begin{split}
        & \left\| \frac{4(n+m-1)}{n+m-2}\Delta_{\phi_0} f-R^m_{\phi_0}f+\frac{n+m+2}{n+m-2}r^m_{\infty}w_{\infty}^{\frac{4}{n+m-2}}f \right\| _{L^q(M)} \\
        \leqslant & \left\| \Pi\left(\frac{4(n+m-1)}{n+m-2}\Delta_{\phi_0} f-R^m_{\phi_0}f+\frac{n+m+2}{n+m-2}r^m_{\infty}w_{\infty}^{\frac{4}{n+m-2}}f\right)\right\|_{L^{q}(M)}\\
        & +C\sup_{a\in A}\left|\int_M w_{\infty}^{\frac{4}{n+m-1}}\psi_a f e^{-\phi_0}dV_{g_0}\right|.
    \end{split}
\end{equation}
Putting these facts together, the assertion follows.\\
~\\Similar to (i), (ii) follows from Lemma \ref{lem2} and the definition of $\Pi$. 
\end{proof}
\begin{lem}\label{analytic}
There exists a positive real number $\xi$ such that for every vector $z\in \R^{A}$ with $|z|\leqslant \xi$, there exists a smooth function $\bar{w}_z$ such that
\begin{equation}
    \int_M w_{\infty}^{\frac{4}{n+m-2}}\psi_a (\bar{w}_z-w_{\infty}) e^{-\phi_0}dV_{g_0}=z_a
\end{equation}
for all $a\in A$ and
\begin{equation}
    \Pi\left(\frac{4(n+m-1)}{n+m-2}\Delta_{\phi_0} \bar{w}_z-R^m_{\phi_0}\bar{w}_z+r^m_{\infty}\bar{w}_z^{\frac{n+m+2}{n+m-2}}\right)=0.
\end{equation}
Furthermore, the map $z\mapsto \bar{w}_z$ is real analytic.
\end{lem}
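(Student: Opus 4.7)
The plan is to apply the real-analytic implicit function theorem. Fix $\alpha\in(0,1)$, and on a small neighborhood of $w_\infty$ in $C^{2,\alpha}(M)$ (where functions remain positive) define the map
\begin{equation*}
F(w,z)=\Bigl(\bigl(\langle w_\infty^{\frac{4}{n+m-2}}\psi_a,\,w-w_\infty\rangle_{\phi_0}-z_a\bigr)_{a\in A},\;\Pi N(w)\Bigr),
\end{equation*}
where $\langle \cdot,\cdot\rangle_{\phi_0}$ denotes the $e^{-\phi_0}{\rm dvol}_{g_0}$-integral and
\begin{equation*}
N(w)=\frac{4(n+m-1)}{n+m-2}\Delta_{\phi_0}w-R^m_{\phi_0}w+r^m_\infty w^{\frac{n+m+2}{n+m-2}}.
\end{equation*}
The target space is $\R^{A}\times \Pi(C^{0,\alpha}(M))$. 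Since $w_\infty$ itself satisfies $N(w_\infty)=0$, we have $F(w_\infty,0)=0$, so $(w_\infty,0)$ is a base point.

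Next I would compute the $w$-linearization at $(w_\infty,0)$. Writing $f$ for the variation, it is
\begin{equation*}
D_w F(w_\infty,0)f=\Bigl(\bigl(\langle w_\infty^{\frac{4}{n+m-2}}\psi_a,f\rangle_{\phi_0}\bigr)_{a\in A},\;\Pi L f\Bigr),\quad
Lf=\frac{4(n+m-1)}{n+m-2}\Delta_{\phi_0}f-R^m_{\phi_0}f+\frac{n+m+2}{n+m-2}r^m_\infty w_\infty^{\frac{4}{n+m-2}}f.
\end{equation*}
The key observation is that, by the eigenvalue equation in Proposition~\ref{prop:com}, $L\psi_a=(\tfrac{n+m+2}{n+m-2}r^m_\infty-\lambda_a)w_\infty^{\frac{4}{n+m-2}}\psi_a$. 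Expanding $f=\sum_a c_a\psi_a$ in the weighted $L^2$-basis, the first block of $D_wF(w_\infty,0)f$ reads $(c_a)_{a\in A}$, while $\Pi L f=\sum_{a\notin A}c_a(\tfrac{n+m+2}{n+m-2}r^m_\infty-\lambda_a)w_\infty^{\frac{4}{n+m-2}}\psi_a$. Since $\lambda_a>\tfrac{n+m+2}{n+m-2}r^m_\infty$ for every $a\notin A$, the scalars $\tfrac{n+m+2}{n+m-2}r^m_\infty-\lambda_a$ are nonzero and bounded away from $0$ on $a\notin A$ with $\lambda_a\to\infty$, so one reads off both injectivity and surjectivity formally; a clean version uses the Fredholm property of $L$ on $C^{2,\alpha}\to C^{0,\alpha}$ to promote this to a genuine Banach-space isomorphism, exactly as in the earlier Lemma~\ref{lem2}/coercivity estimate where $\Pi L$ controls the orthogonal complement.

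Because $w>0$, the nonlinear term $w\mapsto w^{(n+m+2)/(n+m-2)}$ is real-analytic from a neighborhood of $w_\infty$ in $C^{2,\alpha}(M)$ to $C^{2,\alpha}(M)$, so $F$ is real-analytic. The real-analytic implicit function theorem then yields a real-analytic map $z\mapsto \bar w_z$ defined on a ball $\{|z|\le\xi\}\subset\R^{A}$ with $F(\bar w_z,z)=0$ and $\bar w_0=w_\infty$. This already produces the two required properties; smoothness of each $\bar w_z$ follows by standard elliptic bootstrapping applied to the equation $N(\bar w_z)=\sum_{a\in A}\mu_a\,w_\infty^{\frac{4}{n+m-2}}\psi_a$ (for some real coefficients $\mu_a$ built from the finite-dimensional correction removed by $\Pi$), since the right-hand side is smooth.

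The main obstacle, in my view, is the Banach-space surjectivity of $D_wF(w_\infty,0)$: while the argument is transparent in the weighted $L^2$-orthonormal basis, one must check that the corresponding inverse is bounded on $C^{2,\alpha}$ (or $W^{2,p}$), which is where the Fredholm alternative for $L$ together with the spectral gap at $\tfrac{n+m+2}{n+m-2}r^m_\infty$ is essential. Once this is in place, everything else is routine.
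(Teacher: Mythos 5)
Your proposal is correct and uses the same approach as the paper, whose proof is just the single sentence ``This is a consequence of the implicit function theorem.'' You have supplied the details the paper omits — in particular the correct computation that the linearization of $\Pi N$ at $w_\infty$ acts on the weighted eigenbasis by the scalars $\tfrac{n+m+2}{n+m-2}r^m_\infty-\lambda_a$, which are nonzero for $a\notin A$ by the definition of $A$, so the combined map is an isomorphism — and these details are accurate.
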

\begin{proof}
This is a consequence of the implicit function theorem.
\end{proof}
\begin{lem}\label{asy}
There exists a real number $0<\gamma<1$ such that
\begin{equation}
    \begin{split}
        &E(\bar{w}_z)-E(w_{\infty})\\
        & \leqslant C\sup_{a\in A}\left|\int_M \left(\frac{4(n+m-1)}{n+m-2}\Delta_{\phi_0} \bar{w}_z-R^m_{\phi_0}\bar{w}_z+r^m_{\infty}\bar{w}_z^{\frac{n+m+2}{n+m-2}}\right)\psi_a  e^{-\phi_0}dV_{g_0}\right|^{1+\gamma}.
    \end{split}
\end{equation}
if $z$ is sufficiently small.
\end{lem}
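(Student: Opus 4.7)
The plan is to recognize the inequality as a Lojasiewicz--Simon estimate after finite-dimensional reduction. Define $f(z) := E(\bar{w}_z)$ and
\[
c_a(z) := \int_M\left(\tfrac{4(n+m-1)}{n+m-2}\Delta_{\phi_0}\bar w_z - R^m_{\phi_0}\bar w_z + r^m_\infty \bar w_z^{\frac{n+m+2}{n+m-2}}\right)\psi_a\,e^{-\phi_0}{\rm dvol}_{g_0},
\]
abbreviating the integrand defining $c_a(z)$ by $G(\bar w_z)$. By the preceding lemma $z \mapsto \bar w_z$ is real analytic, so $f$ is real analytic near $0$. Since $w_\infty = \bar w_0$ satisfies the Euler--Lagrange equation for $E$, we have $\nabla_z f(0) = 0$, and the classical finite-dimensional Lojasiewicz inequality applied to $f$ yields constants $0 < \gamma < 1$ and $C > 0$ such that $|f(z) - f(0)| \leqslant C|\nabla_z f(z)|^{1+\gamma}$ for $z$ sufficiently small. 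The task reduces to bounding $|\nabla_z f(z)|$ by $\sup_{a \in A}|c_a(z)|$.

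Next I would compute $\nabla_z f$ via the first variation of $E$. Writing $N(w) := \int \tfrac{4(n+m-1)}{n+m-2}(L^m_{\phi_0}w)w\, e^{-\phi_0}{\rm dvol}_{g_0}$, $D(w) := \int w^{\frac{2(n+m)}{n+m-2}}e^{-\phi_0}{\rm dvol}_{g_0}$, and $\mu(w) := N(w)/D(w)$, a direct calculation gives
\[
\partial_{z_a} f(z) = \frac{2}{D(\bar w_z)^{\frac{n+m-2}{n+m}}}\int_M\left[-G(\bar w_z) + (r^m_\infty - \mu(\bar w_z))\bar w_z^{\frac{n+m+2}{n+m-2}}\right]\partial_{z_a}\bar w_z\, e^{-\phi_0}{\rm dvol}_{g_0}.
\]
By construction $\Pi G(\bar w_z) = 0$, so $G(\bar w_z) = \sum_{b \in A} c_b(z)\, w_\infty^{\frac{4}{n+m-2}}\psi_b$. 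Differentiating the normalization $\int w_\infty^{\frac{4}{n+m-2}}\psi_b(\bar w_z - w_\infty)e^{-\phi_0}{\rm dvol}_{g_0} = z_b$ in $z_a$ yields $\int w_\infty^{\frac{4}{n+m-2}}\psi_b \partial_{z_a}\bar w_z\, e^{-\phi_0}{\rm dvol}_{g_0} = \delta_{ab}$, and consequently $\int G(\bar w_z)\partial_{z_a}\bar w_z\,e^{-\phi_0}{\rm dvol}_{g_0} = c_a(z)$.

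To handle the residual $(r^m_\infty - \mu(\bar w_z))$ term, I would test $G(\bar w_z)$ against $\bar w_z$ itself. Integration by parts yields the key identity
\[
\int_M G(\bar w_z)\bar w_z\, e^{-\phi_0}{\rm dvol}_{g_0} = -N(\bar w_z) + r^m_\infty D(\bar w_z) = -D(\bar w_z)\bigl(\mu(\bar w_z) - r^m_\infty\bigr).
\]
Combined with the expansion $G(\bar w_z) = \sum_b c_b(z) w_\infty^{\frac{4}{n+m-2}}\psi_b$ and the fact that $\bar w_z$ stays close to $w_\infty$ uniformly, this gives $|\mu(\bar w_z) - r^m_\infty| \leqslant C\sup_{b \in A}|c_b(z)|$. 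Substituting back into the expression for $\partial_{z_a}f(z)$ and using the uniform bounds on $\bar w_z$ and $\partial_{z_a}\bar w_z$ that follow from analyticity in $z$, we conclude $|\nabla_z f(z)| \leqslant C\sup_{b \in A}|c_b(z)|$. Combining with the Lojasiewicz estimate from the first paragraph yields the statement of the lemma.

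The Lojasiewicz step itself is standard. The real content lies in the gradient computation and the identity of the third paragraph: naively one would hope that $\partial_{z_a}f(z) = -c_a(z)$ up to a normalization constant, but this is spoiled by the Lagrange-multiplier correction $(r^m_\infty - \mu(\bar w_z))$, which is only $O(|z|)$ a priori and could easily dominate $c_a(z)$ (which also vanishes to first order at $z = 0$). The main obstacle is therefore showing that this correction is itself controlled by the $c_b$'s; the identity obtained by pairing the Euler--Lagrange expression with $\bar w_z$ is precisely what saves the argument, and is the one place where the special (Lagrange-multiplier) structure of $r^m_\infty$ really enters.
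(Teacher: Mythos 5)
Your proposal is correct and follows essentially the same route as the paper: reduce to the finite-dimensional Lojasiewicz inequality for the real-analytic function $z\mapsto E(\bar{w}_z)$, compute $\partial_{z_a}E(\bar{w}_z)$, use $\Pi G(\bar w_z)=0$ together with $\int_M w_\infty^{\frac{4}{n+m-2}}\psi_b\,\partial_{z_a}\bar w_z\,e^{-\phi_0}{\rm dvol}_{g_0}=\delta_{ab}$ to identify the principal term with $c_a(z)$, and control the Lagrange-multiplier correction $\tilde E(\bar w_z)-r^m_\infty$ by pairing $G(\bar w_z)$ with $\bar w_z$, which is exactly the paper's treatment of its second term. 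Your closing remark correctly isolates the one nontrivial point (that the multiplier defect is itself $O(\sup_b|c_b|)$), so there is nothing to add.
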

\begin{proof}
Note that the function $z\mapsto E(\bar{w}_z)$ is real analytic. According to results of Lojasiewicz \cite[equation (2.4)]{Simon83}, there exists a real number $0<\gamma<1$ such that
\begin{equation}
    |E(\bar{w}_z)-E(w_{\infty})|\leqslant \sup_{a\in A} \left|\frac{\partial}{\partial z_a}E(\bar{w}_z)\right|^{1+\gamma}
\end{equation}
if $z$ is sufficiently small. For convenience, we define the energy functional $\tilde{E}_{(g_0,\phi_0)}(w)$ as
\begin{equation}
\begin{split}
    \tilde{E}_{(g_0,\phi_0)}(w)&=\frac{\int_M \left(\frac{4(n+m-1)}{n+m-2}L^m_{\phi_0}w,w\right)e^{-\phi_0}dV_{g_0} }{ \int_M w^{\frac{2(n+m)}{n+m-2}}e^{-\phi_0}dV_{g_0}}\\
    & =\frac{\int_M R^m_{\phi}e^{-\phi}dV_{g} }{ {\rm Vol}(M^n, e^{-\phi}dV_g)}.
\end{split}
\end{equation}
The partial derivatives of the function $z\mapsto E(\bar{w}_z)$ are given by
\begin{equation}
    \begin{split}
    \frac{\partial}{\partial z_a}E(\bar{w}_z) =&-2\frac{\int_M \left( \frac{4(n+m-1)}{n+m-2}\Delta_{\phi_0} \bar{w}_z-R^m_{\phi_0}\bar{w}_z+r^m_{\infty}\bar{w}_z^{\frac{n+m+2}{n+m-2}}\right)\tilde{\psi}_{a,z}e^{-\phi_0}dV_{g_0} }{\left( \int_M \bar{w}_z^{\frac{2(n+m)}{n+m-2}}e^{-\phi_0}dV_{g_0}\right)^{\frac{n+m-2}{n+m}}}\\
    & -2(\tilde{E}(\bar{w}_z)-r^m_{\infty})\frac{\int_M \bar{w}_z^{\frac{n+m+2}{n+m-2}}\tilde{\psi}_{a,z}e^{-\phi_0}dV_{g_0} }{\left( \int_M \bar{w}_z^{\frac{2(n+m)}{n+m-2}}e^{-\phi_0}dV_{g_0}\right)^{\frac{n+m-2}{n+m}}}
    \end{split}
\end{equation}
where $\tilde{\psi}_{a,z}=\frac{\partial}{\partial z_a}\bar{w}_z$ for $a\in A$. The function $\tilde{\psi}_{a,z}$ satisfies
\begin{equation}
\int_M w_{\infty}^{\frac{4}{n+m-2}}\tilde{\psi}_{a,z}\psi_b e^{-\phi_0}dV_{g_0}=\left\{
             \begin{array}{lr}
             0, & a\neq b \\
             1, & a=b
             \end{array}
\right.
\end{equation}
for all $a\in A$ and
\begin{displaymath}
\Pi\left(\frac{4(n+m-1)}{n+m-2}\Delta_{ \phi_0}\tilde{\psi}_{a,z} -R^m_{\phi_0}\tilde{\psi}_{a,z}+\frac{n+m+2}{n+m-2}r^m_{\infty}w_{\infty}^{\frac{4}{n+m-2}}\tilde{\psi}_{a,z}\right)=0.
\end{displaymath}
Using the identity
\begin{displaymath}
\Pi\left( \frac{4(n+m-1)}{n+m-2}\Delta_{\phi_0} \bar{w}_z-R^m_{\phi_0}\bar{w}_z+r^m_{\infty}\bar{w}_z^{\frac{n+m+2}{n+m-2}}\right)=0,
\end{displaymath}
we obtain
\begin{equation}
    \begin{split}
    \frac{\partial}{\partial z_a}E(\bar{w}_z) =&-2\frac{\int_M \left( \frac{4(n+m-1)}{n+m-2}\Delta_{\phi_0} \bar{w}_z-R^m_{\phi_0}\bar{w}_z+r^m_{\infty}\bar{w}_z^{\frac{n+m+2}{n+m-2}}\right){\psi}_{a}e^{-\phi_0}dV_{g_0} }{\left( \int_M \bar{w}_z^{\frac{2(n+m)}{n+m-2}}e^{-\phi_0}dV_{g_0}\right)^{\frac{n+m-2}{n+m}}}\\
    & +2\sum_{b\in A}\frac{\left(\int_M \bar{w}_z^{\frac{n+m+2}{n+m-2}}\tilde{\psi}_{a,z}e^{-\phi_0}dV_{g_0} \right)\left(\int_M w_{\infty}^{\frac{4}{n+m-2}}\bar{w}_z\psi_b e^{-\phi_0}dV_{g_0} \right)}{\left( \int_M \bar{w}_z^{\frac{2(n+m)}{n+m-2}}e^{-\phi_0}dV_{g_0}\right)^{\frac{n+m-2}{n+m}}}\\
    & \cdot \frac{\left(\int_M  \frac{4(n+m-1)}{n+m-2}\Delta_{ \phi_0} \bar{w}_z-R^m_{\phi_0}\bar{w}_z+r^m_{\infty}\bar{w}_z^{\frac{n+m+2}{n+m-2}}\right) \psi_b e^{-\phi_0}dV_{g_0}}{\int_M \bar{w}_z^{\frac{2(n+m)}{n+m-2}}e^{-\phi_0}dV_{g_0}}
    \end{split}
\end{equation}
for all $a\in A$. Thus, we obtain that
\begin{displaymath}
\begin{split}
\sup_{a\in A} &\left|\frac{\partial}{\partial z_a}E(\bar{w}_z)\right|\\
& \leqslant C \sup_{a\in A} \left|\int_M \left( \frac{4(n+m-1)}{n+m-2}\Delta_{\phi_0} \bar{w}_z-R^m_{\phi_0}\bar{w}_z+r^m_{\infty}\bar{w}_z^{\frac{n+m+2}{n+m-2}}\right){\psi}_{a}e^{-\phi_0}dV_{g_0}\right|.
\end{split}
\end{displaymath}
From this, the assertion follows.
\end{proof}
By Lemma \ref{analytic}, the function 
\begin{displaymath}
z\to \int_M \left(L^m_{\phi_0}(w_i-\bar{w}_{z}),(w_i-\bar{w}_{z})\right)e^{-\phi_0}dV_{g_0}
\end{displaymath}
is analytical and attains the infimum for $|z|\leqslant \xi$.
For every $i\in \N$, we can find $\bar{w}_{z_i}$ such that $|z_i|\leqslant \xi$ and
\begin{equation}
\begin{split}
    & \int_M \left(L^m_{\phi_0}(w_i-\bar{w}_{z_i}),(w_i-\bar{w}_{z_i})\right)e^{-\phi_0}dV_{g_0}\\
    & \leqslant \int_M \left(L^m_{\phi_0}(w_i-\bar{w}_{z}),(w_i-\bar{w}_{z})\right)e^{-\phi_0}dV_{g_0}
     \end{split}
\end{equation}
for all $|z|\leqslant \xi$. 

\begin{prop}\label{pro1}
We have as $i\to \infty$,
    \begin{equation}\label{4.26}
    \| w_i-\bar{w}_{z_i}\|_{W^{1,2}(M)}\to 0 ~~\mbox{and}~~  z_i\to 0.
    \end{equation}
\end{prop}

\begin{proof}
Notice that by assumptions in Lemma \ref{analytic}, we have $\bar{w}_{0}=w_{\infty}$. Combining this with the definition of $\bar{w}_{z_i}$ yields 
\begin{equation}
\begin{split}
     & \int_M \left(L^m_{\phi_0}(w_i-\bar{w}_{z_i}),(w_i-\bar{w}_{z_i})\right)e^{-\phi_0}dV_{g_0}\\
     & \leqslant \int_M \left(L^m_{\phi_0}(w_i-{w}_{\infty}),(w_i-{w}_{\infty})\right)e^{-\phi_0}dV_{g_0}.
     \end{split}
\end{equation}

By the compactness result in Proposition \ref{prop:com}, the expression on the right-hand side tends to $0$ as $i\to \infty$, i.e we have as $i\to \infty$,
\begin{equation}
    \| w_i-\bar{w}_{z_i}\|_{W^{1,2}(M)}\to 0.
    \end{equation}
and 
\begin{equation}\label{4.27}
    \| \bar{w}_{z_i}-w_{\infty}\|_{W^{1,2}(M)}\to 0,
\end{equation}
which implies that $z_i\to 0$ as $i\to \infty$.
\end{proof}

We now decompose the function $w_i$ as
\begin{equation*}
    w_i=\bar{w}_{z_i}+u_i.
\end{equation*}
Note that the function $u_i$ satisfies
\begin{equation}
    \int_M \left(\frac{4(n+m-1)}{n+m-2}L^m_{\phi_0}u_i,u_i\right)e^{-\phi_0}dV_{g_0}=o(1)
\end{equation}
by Proposition \ref{pro1}.

\begin{prop}\label{pro2}
The function $u_i$ satisfies the following two properties.
\begin{enumerate}
    \item For every $a\in A$, we have
    \begin{equation}\label{o(1)}
        \left|\int_M w_{\infty}^{\frac{4}{n+m-2}}\psi_a u_i e^{-\phi_0}dV_{g_0} \right|\leqslant o(1)\int_M |u_i|e^{-\phi_0}dV_{g_0}.
    \end{equation}
    \item If $i$ is sufficiently large, then we have
    \begin{equation}
    \begin{split}
        \frac{n+m+2}{n+m-2}&r^m_{\infty}\int_M w_{\infty}^{\frac{4}{n+m-2}} u^2_i e^{-\phi_0}dV_{g_0}\\
        &\leqslant (1-c)\int_M \left(\frac{4(n+m-1)}{n+m-2}L^m_{\phi_0}u_i,u_i\right)e^{-\phi_0}dV_{g_0}
        \end{split}
    \end{equation}
    for some positive constant independent of $i$.
\end{enumerate}
\end{prop}
\begin{proof}
\begin{enumerate}
    \item As above, let $\tilde{\psi}_{a,z}=\frac{\partial}{\partial z_a}\bar{w}_z$ for $a\in A$. By the definition of $z_i$, we have
    \begin{equation*}
        \int_M \left(\frac{4(n+m-1)}{n+m-2}L^m_{\phi_0}\tilde{\psi}_{a,z},u_i\right)e^{-\phi_0}dV_{g_0}=0.
    \end{equation*}
    This implies that
    \begin{equation*}
    \begin{split}
        \lambda_a &\int_M w_{\infty}^{\frac{4}{n+m-2}}\psi_a u_i e^{-\phi_0}dV_{g_0}\\
        &= -\int_M \left(\frac{4(n+m-1)}{n+m-2}L^m_{\phi_0}\psi_a,u_i\right)e^{-\phi_0}dV_{g_0}\\
        &= \int_M \left(\frac{4(n+m-1)}{n+m-2}L^m_{\phi_0}\left(\tilde{\psi}_{a,z}-\psi_a\right),u_i\right)e^{-\phi_0}dV_{g_0}.
        \end{split}
    \end{equation*}
    Since $\lambda_a>0$, we conclude that for all $a\in A$
    \begin{equation*}
        \left|\int_M w_{\infty}^{\frac{4}{n+m-2}}\psi_a u_i e^{-\phi_0}dV_{g_0} \right|\leqslant o(1)\int_M |u_i|e^{-\phi_0}dV_{g_0}.
    \end{equation*}
    \item Suppose this is not true. Upon rescaling, we obtain a sequence of function $\{\tilde{u}_i:i\in \N\}$ such that
    \begin{equation}\label{normalization}
        \int_M \left(\frac{4(n+m-1)}{n+m-2}L^m_{\phi_0}\tilde{u}_i,\tilde{u}_i\right)e^{-\phi_0}dV_{g_0}=1
    \end{equation}
    and 
    \begin{equation}\label{normalization2}
        \lim_{i\to \infty}\frac{n+m+2}{n+m-2}r^m_{\infty}\int_M w_{\infty}^{\frac{4}{n+m-2}} \tilde{u}^2_i e^{-\phi_0}dV_{g_0}\geqslant 1.
    \end{equation}
    Observe that
    \begin{equation*}
        \int_M |\tilde{u}_i|^{\frac{2(n+m)}{n+m-2}}e^{-\phi_0}dV_{g_0}\leqslant Y_{n,m}^{-\frac{n+m}{n+m-2}}
    \end{equation*}
    by (\ref{normalization}). By (\ref{normalization}) and (\ref{normalization2}), we conclude that
    \begin{equation*}
        \lim_{i\to \infty}\int_M w_{\infty}^{\frac{4}{n+m-2}} \tilde{u}^2_i e^{-\phi_0}dV_{g_0}>0
    \end{equation*}
    and 
    \begin{equation*}
        \begin{split}
            \lim_{i\to \infty}&\int_M \left(\frac{4(n+m-1)}{n+m-2}L^m_{\phi_0}\tilde{u}_i,\tilde{u}_i\right)e^{-\phi_0}dV_{g_0}\\
            & \leqslant \lim_{i\to \infty}\frac{n+m+2}{n+m-2}r^m_{\infty}\int_M w_{\infty}^{\frac{4}{n+m-2}} \tilde{u}^2_i e^{-\phi_0}dV_{g_0}.
        \end{split}
    \end{equation*}
    Let $\tilde{u}$ be the weak limit of the sequence $\{\tilde{u}_i:i\in \N\}$. Then, the function $\tilde{u}$ satisfies
    \begin{equation*}
        \int_M w_{\infty}^{\frac{4}{n+m-2}} \tilde{u}^2 e^{-\phi_0}dV_{g_0}>0
    \end{equation*}
    and 
    \begin{equation*}
            \int_M \left(\frac{4(n+m-1)}{n+m-2}L^m_{\phi_0}\tilde{u},\tilde{u}\right)e^{-\phi_0}dV_{g_0} \leqslant \frac{n+m+2}{n+m-2}r^m_{\infty}\int_M w_{\infty}^{\frac{4}{n+m-2}} \tilde{u}^2 e^{-\phi_0}dV_{g_0}.
    \end{equation*}
    This implies that
    \begin{equation*}
    \begin{split}
        \sum_{a\in \N}&\lambda_a\left(\int_M w_{\infty}^{\frac{4}{n+m-2}}\psi_a \tilde{u} e^{-\phi_0}dV_{g_0}\right)^2\\
        &\leqslant \sum_{a\in \N}\frac{n+m+2}{n+m-2}r^m_{\infty} \left(\int_M w_{\infty}^{\frac{4}{n+m-2}}\psi_a \tilde{u} e^{-\phi_0}dV_{g_0}\right)^2.
        \end{split}
    \end{equation*}
    Using (\ref{o(1)}), we obtain that for all $a\in A$
    \begin{equation*}
        \int_M w_{\infty}^{\frac{4}{n+m-2}}\psi_a \tilde{u} e^{-\phi_0}dV_{g_0}=0.
    \end{equation*}
    Therefore, we conclude that $\tilde{u}=0$ on $M$. This is a contradiction.
\end{enumerate}
\end{proof}

\begin{cor}\label{pro3}
If $i$ is sufficiently large, then we have
    \begin{equation}
    \begin{split}
        \frac{n+m+2}{n+m-2}&r^m_{\infty}\int_M \bar{w}_{z_i}^{\frac{4}{n+m-2}} u^2_i e^{-\phi_0}dV_{g_0}\\
        &\leqslant (1-c)\int_M \left(\frac{4(n+m-1)}{n+m-2}L^m_{\phi_0}u_i,u_i\right)e^{-\phi_0}dV_{g_0}
        \end{split}
    \end{equation}
    for some positive constant independent of $i$.
\end{cor}
\begin{proof}
The assertion follows from Proposition \ref{pro1} and Proposition \ref{pro2}.
\end{proof}

\begin{lem}\label{est1}
The function $u_i$ satisfies
\begin{equation}
    \| u_i\|_{L^{\frac{n+m+2}{n+m-2}}(M)}\leqslant C\left\| w_i^{\frac{n+m+2}{n+m-2}}(R^m_{\phi_i}-r^m_{\infty})\right\|_{L^{\frac{2(n+m)}{n+m+2}}(M)}
\end{equation}
if $i$ is sufficiently large.
\end{lem}
\begin{proof}
Using the identities
\begin{displaymath}
\frac{4(n+m-1)}{n+m-2}\Delta_{\phi_0} w_i-R^m_{\phi_0}w_i+r^m_{\infty}w_i^{\frac{n+m+2}{n+m-2}}=-w_i^{\frac{n+m+2}{n+m-2}}(R^m_{\phi_i}-r^m_{\infty})
\end{displaymath}
and 
\begin{displaymath}
\Pi\left( \frac{4(n+m-1)}{n+m-2}\Delta_{\phi_0} \bar{w}_z-R^m_{\phi_0}\bar{w}_z+r^m_{\infty}\bar{w}_z^{\frac{n+m+2}{n+m-2}}\right)=0,
\end{displaymath}
we obtain
\begin{equation}
\begin{aligned}
\Pi&\left(\frac{4(n+m-1)}{n+m-2}\Delta_{\phi_0}u_i-R^m_{\phi_0}u_i+\frac{n+m+2}{n+m-2}r^m_{\infty}w_{\infty}^{\frac{4}{n+m-2}}u_i\right)\\
=& \Pi\left(-w_i^{\frac{n+m+2}{n+m-2}}(R^m_{\phi_i}-r^m_{\infty})-\frac{n+m+2}{n+m-2}r^m_{\infty}\left(\bar{w}_{z_i}^{\frac{4}{n+m-2}}-w_{\infty}^{\frac{4}{n+m-2}}\right)u_i \notag\right.
\\
\phantom{=\;\;} 
& \left. +r^m_{\infty}\left(w_i^{\frac{n+m+2}{n+m-2}}-\bar{w}_{z_i}^{\frac{n+m+2}{n+m-2}}+ \frac{n+m+2}{n+m-2}\bar{w}_{z_i}^{\frac{4}{n+m-2}}u_i\right)\right)
\end{aligned}
\end{equation}
Using the inequality, 
\begin{displaymath}
\begin{split}
    &\| u_i\|_{L^{\frac{n+m+2}{n+m-2}}(M)}\leqslant  C\sup_{a\in A}\left|\int_M w_{\infty}^{\frac{4}{n+m-2}}\psi_a u_i
    e^{-\phi_0}{\rm dvol}_{g_0}\right|\\
    & +C\left\| \Pi\left(\frac{4(n+m-1)}{n+m-2}\Delta_{ \phi_0}u_i -R^m_{\phi_0}u_i+\frac{n+m+2}{n+m-2}r^m_{\infty}w_{\infty}^{\frac{4}{n+m-2}}u_i\right)\right\|_{L^{s}(M)},
\end{split}
\end{displaymath}
we conclude that
\begin{displaymath}
\begin{split}
    \|u_i\|&_{L^{\frac{n+m+2}{n+m-2}}(M)}\leqslant  C\left\| \bar{w}_{z_i}^{\frac{n+m+2}{n+m-2}}-w_i^{\frac{n+m+2}{n+m-2}}+ \frac{n+m+2}{n+m-2}\bar{w}_{z_i}^{\frac{4}{n+m-2}}u_i \right\|_{L^{s}(M)}\\
    & +C\left\| w_i^{\frac{n+m+2}{n+m-2}}(R^m_{\phi_i}-r^m_{\infty})\right\|_{L^{s}(M)}+C\left\| (\bar{w}_{z_i}^{\frac{4}{n+m-2}}-w_{\infty}^{\frac{4}{n+m-2}})u_i \right\|_{L^{s}(M)}\\
    & +C\sup_{a\in A}\left|\int_M w_{\infty}^{\frac{4}{n+m-2}}\psi_a u_i
    e^{-\phi_0}dV_{g_0}\right|.
\end{split}
\end{displaymath}
By the compactness of $(M, g)$, up to a subsequence, we can assume that 
\begin{equation}\label{pointcong}
    w_i\to w_{\infty} ~~\mbox{ and }~~ \bar{w}_{z_i}\to w_{\infty} \quad ~~\mbox{ a.e. in }~~ M.
\end{equation}
When $n\geqslant 3$ and $m>0$, $s=\frac{n(n+m+2)}{n(n+m-2)+2(n+m+2)}<\frac{n+m+2}{n+m-2}$. Combining (\ref{pointcong}) with Lebesgue's dominated convergence theorem and H\"older's inequality yields
\begin{equation*}
    \left\| (\bar{w}_{z_i}^{\frac{4}{n+m-2}}-w_{\infty}^{\frac{4}{n+m-2}})u_i \right\|_{\tilde{L}^{s}(M)}=o(1)\left\| u_i \right\|_{\tilde{L}^{\frac{n+m+2}{n+m-2}}(M)}.
\end{equation*}

By (\ref{pointcong}), we have the pointwise estimate
\begin{equation*}
\begin{split}
&\left|\bar{w}_{z_i}^{\frac{n+m+2}{n+m-2}}-w_i^{\frac{n+m+2}{n+m-2}}+ \frac{n+m+2}{n+m-2}\bar{w}_{z_i}^{\frac{4}{n+m-2}}u_i \right|
    \\
    &\leqslant C \bar{w}_{z_i}^{\max\{\frac{4}{n+m-2}-1,0\}}|u_i|^{\min\{\frac{n+m+2}{n+m-2},2\}}+C|u_i|^{\frac{n+m+2}{n+m-2}}
    \end{split}
\end{equation*}
if $i$ is sufficiently large. Moreover, by Proposition \ref{pro1}, we know that $z_i \to 0$ as $i\to \infty.$ Combining this with the fact that the map $z\mapsto\overline{w}_z$ is real analytic and $\overline{w}_{0}=w_{\infty}$, we have that $\bar{w}_{z_i}$ is uniformly bounded if $i$ is sufficiently large.

Hence, by H\"older's inequality, we obtain
\begin{equation*}
    \begin{split}
        &\left\| \bar{w}_{z_i}^{\frac{n+m+2}{n+m-2}}-w_i^{\frac{n+m+2}{n+m-2}}+ \frac{n+m+2}{n+m-2}\bar{w}_{z_i}^{\frac{4}{n+m-2}}u_i \right\|_{\tilde{L}^{s}(M)}\\
        &\leqslant C \left\| |u_i|^{\min\{\frac{n+m+2}{n+m-2},2\}}+|u_i|^{\frac{n+m+2}{n+m-2}} \right\|_{\tilde{L}^{s}(M)}\\
        & \leqslant  C \left\| |u_i|^{\min\{\frac{4}{n+m-2},1\}}+|u_i|^{\frac{4}{n+m-2}} \right\|_{\tilde{L}^{\frac{n}{2}}(M)}\left\| u_i \right\|_{\tilde{L}^{\frac{n+m+2}{n+m-2}}(M)}\\
        & \leqslant o(1)\left\| u_i \right\|_{\tilde{L}^{\frac{n+m+2}{n+m-2}}(M)}.
    \end{split}
\end{equation*}

Moreover, according to Proposition \ref{pro2}, we have
\begin{equation*}
    \sup_{a\in A}\left|\int_M w_{\infty}^{\frac{4}{n+m-2}}\psi_a u_i
    e^{-\phi_0}dV_{g_0}\right|=o(1)\left\| u_i \right\|_{\tilde{L}^{1}(M)}.
\end{equation*}
Putting these facts together, the assertion follows.
\end{proof}
\begin{lem}\label{est2}
The difference $u_i$ satisfies
\begin{equation}
    \left\| u_i\right\|_{L^{1}(M)}\leqslant C\left\| w_i^{\frac{n+m+2}{n+m-2}}(R^m_{\phi_i}-r^m_{\infty})\right\|_{L^{\frac{2(n+m)}{n+m+2}}(M)}
\end{equation}
if $i$ is sufficiently large.
\end{lem}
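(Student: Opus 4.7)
The plan is to mirror the proof of Lemma~\ref{est1}, with the only substantive change being that we apply part (ii) of the preceding $L^p$-estimate lemma (the one stated just before Lemma~\ref{asy}), which gives an $L^1$ coercivity, in place of part (i). Setting
\[
L_{\mathrm{lin}} u := \tfrac{4(n+m-1)}{n+m-2}\Delta_{\phi_0} u - R^m_{\phi_0} u + \tfrac{n+m+2}{n+m-2} r^m_{\infty} w_{\infty}^{\frac{4}{n+m-2}} u,
\]
that lemma applied with $f = w_i - \bar{w}_{z_i}$ gives
\[
\|w_i - \bar{w}_{z_i}\|_{L^1(M)} \leqslant C \bigl\|\Pi(L_{\mathrm{lin}}(w_i - \bar{w}_{z_i}))\bigr\|_{L^1(M)} + C \sup_{a \in A}\Bigl|\int_M w_{\infty}^{\frac{4}{n+m-2}} \psi_a (w_i - \bar{w}_{z_i}) e^{-\phi_0} {\rm dvol}_{g_0}\Bigr|.
\]

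The expansion of $\Pi(L_{\mathrm{lin}}(w_i - \bar{w}_{z_i}))$ then proceeds exactly as in Lemma~\ref{est1}: one subtracts the equation $\tfrac{4(n+m-1)}{n+m-2}\Delta_{\phi_0} w_i - R^m_{\phi_0} w_i + r^m_{\infty} w_i^{(n+m+2)/(n+m-2)} = -w_i^{(n+m+2)/(n+m-2)}(R^m_{\phi_i}-r^m_{\infty})$ from the projection identity $\Pi(\tfrac{4(n+m-1)}{n+m-2}\Delta_{\phi_0}\bar{w}_{z_i} - R^m_{\phi_0}\bar{w}_{z_i} + r^m_{\infty}\bar{w}_{z_i}^{(n+m+2)/(n+m-2)})=0$ and rearranges. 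This produces a principal piece $-\Pi\bigl(w_i^{(n+m+2)/(n+m-2)}(R^m_{\phi_i}-r^m_{\infty})\bigr)$ together with the two nonlinear errors $(\bar{w}_{z_i}^{4/(n+m-2)} - w_{\infty}^{4/(n+m-2)})(w_i - \bar{w}_{z_i})$ and the Taylor remainder $w_i^{(n+m+2)/(n+m-2)} - \bar{w}_{z_i}^{(n+m+2)/(n+m-2)} + \tfrac{n+m+2}{n+m-2}\bar{w}_{z_i}^{4/(n+m-2)}(w_i - \bar{w}_{z_i})$.

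Next I would control each contribution in $L^1$. For the principal piece, since the weighted volume is normalized to $1$ and $\tfrac{2(n+m)}{n+m+2}>1$ whenever $n+m>2$, H\"older's inequality converts the $L^1$ norm into the desired $L^{2(n+m)/(n+m+2)}$ norm. For the two nonlinear error terms I would invoke the uniform bounds $c \leqslant w_i, \bar{w}_{z_i} \leqslant C$ of Proposition~\ref{pro:bound} together with the smooth convergence $\bar{w}_{z_i} \to w_{\infty}$ so that the first is pointwise bounded by $o(1)\,|w_i - \bar{w}_{z_i}|$ and the Taylor term is pointwise bounded by $C|w_i - \bar{w}_{z_i}|^2$. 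Since $w_i - \bar{w}_{z_i} \to 0$ in $L^2(M)$ by the compactness result preceding Lemma~\ref{est1}, both contributions go to zero in $L^1$. Finally the sup term is bounded by $C\|w_i - \bar{w}_{z_i}\|_{L^2(M)}$, which is likewise $o(1)$.

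The main obstacle is not conceptual: the argument is a direct adaptation of Lemma~\ref{est1} and is actually slightly easier, because no absorption onto the left-hand side is needed, every non-principal contribution being genuinely $o(1)$ in $L^1$. The only point that requires mild care is the Taylor remainder, whose exponent $\tfrac{4}{n+m-2}-1$ changes sign with $n+m$; one keeps the uniform upper and lower bounds on $w_i, \bar{w}_{z_i}$ in play so that the remainder is dominated by $C|w_i - \bar{w}_{z_i}|^2$ uniformly, regardless of the sign of that exponent.
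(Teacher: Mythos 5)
Your proof is correct and is precisely the argument the paper intends: the paper omits the proof of Lemma \ref{est2}, saying only that it is ``totally similar'' to Lemma \ref{est1}, and your adaptation --- part (ii) of the coercivity lemma in place of part (i), the same decomposition of $\Pi(L_{\mathrm{lin}}(w_i-\bar w_{z_i}))$, H\"older to pass from $L^1$ to $L^{\frac{2(n+m)}{n+m+2}}$ on the principal term, and $o(1)$ control of the nonlinear errors and the spectral term via $H^1$-convergence --- is exactly what is needed. One caution: do not cite Proposition \ref{pro:bound} for the uniform two-sided bounds on $w_i$, since that proposition is proved in Section 3 \emph{assuming} Proposition \ref{cri}, which is what Section 4 (and hence this lemma) is establishing; the required upper and lower bounds on $w_i$ along the subsequence follow instead from the compactness result at the beginning of Section 4, which already gives uniform convergence of $w_i$ to the positive function $w_\infty$.
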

\begin{proof}
The proof is totally similar to that of Lemma \ref{est1}. We omit it.
\end{proof}
\begin{lem}\label{est3}
We have
\begin{equation}
    \begin{split}
        \sup_{a\in A}&\left|\left(\int_M \left( \frac{4(n+m-1)}{n+m-2}\Delta_{ \phi_0} \bar{w}_{z_i}-R^m_{\phi_0}\bar{w}_{z_i}+r^m_{\infty}\bar{w}_{z_i}^{\frac{n+m+2}{n+m-2}}\right) \psi_a e^{-\phi_0}dV_{g_0}\right)\right|\\
        & \leqslant C\left(\int_M w(t_i)^{\frac{2(n+m)}{n+m-2}}|R^m_{\phi_i}-r^m_{\infty}|^{\frac{2(n+m)}{n+m+2}}e^{-\phi_0} dV_{g_0}\right)^{\frac{n+m+2}{2(n+m)}}
    \end{split}
\end{equation}
if $i$ is sufficiently large.
\end{lem}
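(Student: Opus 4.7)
The plan is to decompose
\begin{equation*}
\int_M F(\bar{w}_{z_i})\psi_a e^{-\phi_0}{\rm dvol}_{g_0} = \int_M F(w_i)\psi_a e^{-\phi_0}{\rm dvol}_{g_0} + \int_M \bigl(F(\bar{w}_{z_i}) - F(w_i)\bigr)\psi_a e^{-\phi_0}{\rm dvol}_{g_0},
\end{equation*}
where $F(u) := \frac{4(n+m-1)}{n+m-2}\Delta_{\phi_0}u - R^m_{\phi_0}u + r^m_{\infty}u^{\frac{n+m+2}{n+m-2}}$. The first summand will supply the principal term on the right-hand side via the equation satisfied by $w_i$, while the commutator term will be absorbed using Lemmas \ref{est1} and \ref{est2}.

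By the conformal transformation law (\ref{eq:conf}), applied to the pair $(g_i,\phi_i)=(w_i^{4/(n+m-2)}g_0, \phi_i)$, the conformal factor $w_i$ satisfies the pointwise identity
\begin{equation*}
F(w_i) = -w_i^{\frac{n+m+2}{n+m-2}}(R^m_{\phi_i} - r^m_{\infty}).
\end{equation*}
H\"older's inequality with conjugate exponents $\frac{2(n+m)}{n+m+2}$ and $\frac{2(n+m)}{n+m-2}$, combined with the uniform bound $\sup_{a\in A}\|\psi_a\|_{L^{2(n+m)/(n+m-2)}}\leqslant C$ (coming from the finiteness of $A$ and smoothness of each eigenfunction), yields
\begin{equation*}
\left|\int_M F(w_i)\psi_a e^{-\phi_0}{\rm dvol}_{g_0}\right| \leqslant C\left(\int_M w_i^{\frac{2(n+m)}{n+m-2}}|R^m_{\phi_i} - r^m_{\infty}|^{\frac{2(n+m)}{n+m+2}}e^{-\phi_0}{\rm dvol}_{g_0}\right)^{\frac{n+m+2}{2(n+m)}},
\end{equation*}
which is exactly the principal term in the asserted inequality.

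For the commutator, self-adjointness of $\Delta_{\phi_0}$ with respect to $e^{-\phi_0}{\rm dvol}_{g_0}$ moves the second-order derivatives onto $\psi_a$. The resulting linear piece is dominated by $C\|w_i - \bar{w}_{z_i}\|_{L^1}$, which by Lemma \ref{est2} is at most $C(\text{principal term}) + o(1)$. For the nonlinear piece $r^m_{\infty}\int_M(\bar{w}_{z_i}^p - w_i^p)\psi_a e^{-\phi_0}{\rm dvol}_{g_0}$ with $p = \frac{n+m+2}{n+m-2}$, apply the mean value estimate $|a^p - b^p|\leqslant p(a+b)^{p-1}|a-b|$ and split into the halves $\bar{w}_{z_i}^{p-1}|w_i - \bar{w}_{z_i}|$ and $w_i^{p-1}|w_i - \bar{w}_{z_i}|$. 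Since $z\mapsto \bar{w}_z$ is real analytic with $\bar{w}_0 = w_{\infty}$ and $z_i\to 0$, the factor $\bar{w}_{z_i}^{p-1}$ is uniformly bounded, so the first half is $\leqslant C\|w_i - \bar{w}_{z_i}\|_{L^1}$ and again controlled by Lemma \ref{est2}. For the $w_i^{p-1}=w_i^{4/(n+m-2)}$ half, H\"older with exponents $\frac{n+m}{2}$ and $\frac{n+m}{n+m-2}$ combined with the normalization (\ref{cond1}) gives $\|w_i^{4/(n+m-2)}\|_{L^{(n+m)/2}} = 1$, reducing matters to $\|w_i - \bar{w}_{z_i}\|_{L^{(n+m)/(n+m-2)}}$. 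On the compact manifold this is dominated by $\|w_i - \bar{w}_{z_i}\|_{L^{(n+m+2)/(n+m-2)}}$, which by Lemma \ref{est1} is $\leqslant C(\text{principal term}) + o(1)$.

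The main obstacle is precisely the $w_i^{p-1}|w_i - \bar{w}_{z_i}|$ contribution: no pointwise bound on $w_i$ is yet available at this stage, since Proposition \ref{pro:bound} is itself a downstream consequence of the very critical proposition we are preparing. The resolution is that $p-1 = \frac{4}{n+m-2}$ is small enough that the fixed volume normalization alone places $w_i^{p-1}$ in $L^{(n+m)/2}$ with unit norm, and the H\"older-conjugate exponent $\frac{n+m}{n+m-2}$ is strictly smaller than the $\frac{n+m+2}{n+m-2}$ controlled by Lemma \ref{est1}; this is the narrow window that makes the whole bootstrap work.
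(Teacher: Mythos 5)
Your proof is correct and takes essentially the same route as the paper: decompose around $F(w_i)$, invoke the pointwise identity $F(w_i)=-w_i^{\frac{n+m+2}{n+m-2}}(R^m_{\phi_i}-r^m_\infty)$ to extract the principal term via H\"older, integrate by parts against the eigenfunction $\psi_a$, and control the nonlinear remainder with Lemmas \ref{est1} and \ref{est2}. The only divergence is in the mean-value bound for $|a^p-b^p|$: the paper uses the asymmetric estimate $|a^p-b^p|\leqslant C\,\bar w_{z_i}^{p-1}|a-b|+C|a-b|^p$, which routes the unbounded factor entirely into the $|a-b|^p$ term and so sidesteps the $w_i^{p-1}$ issue you flagged as the main obstacle, whereas your symmetric $(a+b)^{p-1}$ split forces the extra H\"older step against the normalization (\ref{cond1}); both work, but the paper's choice is a bit leaner.
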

\begin{proof}
Integration by parts yields
\begin{displaymath}
\begin{split}
\int_M & \left( \frac{4(n+m-1)}{n+m-2}\Delta_{\phi_0} \bar{w}_{z_i}-R^m_{\phi_0}\bar{w}_{z_i}+r^m_{\infty}\bar{w}_{z_i}^{\frac{n+m+2}{n+m-2}}\right) \psi_a e^{-\phi_0}dV_{g_0}\\
=&\int_M \left( \frac{4(n+m-1)}{n+m-2}\Delta_{\phi_0} {w}_i-R^m_{\phi_0}{w}_i+r^m_{\infty}{w}_i^{\frac{n+m+2}{n+m-2}}\right) \psi_a e^{-\phi_0}dV_{g_0}\\
&+\lambda_a \int_M w_{\infty}^{\frac{4}{n+m-2}} (w_i-\bar{w}_{z_i})\psi_a e^{-\phi_0}dV_{g_0}\\
&-r^m_{\infty}\int_M \left(w_i^{\frac{n+m+2}{n+m-2}}-\bar{w}_{z_i}^{\frac{n+m+2}{n+m-2}}\right)\psi_ae^{-\phi_0}dV_{g_0}.
\end{split}
\end{displaymath}
Using the identities
\begin{displaymath}
\frac{4(n+m-1)}{n+m-2}\Delta_{\phi_0} w_i-R^m_{\phi_0}w_i+r^m_{\infty}w_i^{\frac{n+m+2}{n+m-2}}=-w_i^{\frac{n+m+2}{n+m-2}}(R^m_{\phi_i}-r^m_{\infty}),
\end{displaymath}
we obtain
\begin{displaymath}
\begin{split}
\int_M & \left( \frac{4(n+m-1)}{n+m-2}\Delta_{\phi_0} \bar{w}_{z_i}-R^m_{\phi_0}\bar{w}_{z_i}+r^m_{\infty}\bar{w}_{z_i}^{\frac{n+m+2}{n+m-2}}\right) \psi_a e^{-\phi_0}dV_{g_0}\\
=&-\int_M w_i^{\frac{n+m+2}{n+m-2}}(R^m_{\phi_i}-r^m_{\infty}) \psi_a e^{-\phi_0}dV_{g_0}\\
&+\lambda_a \int_M w_{\infty}^{\frac{4}{n+m-2}} (w_i-\bar{w}_{z_i})\psi_a e^{-\phi_0}dV_{g_0}\\
&-r^m_{\infty}\int_M \left(w_i^{\frac{n+m+2}{n+m-2}}-\bar{w}_{z_i}^{\frac{n+m+2}{n+m-2}}\right)\psi_ae^{-\phi_0}dV_{g_0}.
\end{split}
\end{displaymath}
Using the pointwise estimate
\begin{displaymath}
\left|w_i^{\frac{n+m+2}{n+m-2}}-\bar{w}_{z_i}^{\frac{n+m+2}{n+m-2}}\right|\leqslant C \bar{w}_{z_i}^{\frac{4}{n+m-2}}\left|w_i-\bar{w}_{z_i}\right|+C\left|w_i-\bar{w}_{z_i}\right|^{\frac{n+m+2}{n+m-2}},
\end{displaymath}
we conclude that
\begin{displaymath}
\begin{split}
        \sup_{a\in A}&\left|\left(\int_M \left( \frac{4(n+m-1)}{n+m-2}\Delta_{ \phi_0} \bar{w}_{z_i}-R^m_{\phi_0}\bar{w}_{z_i}+r^m_{\infty}\bar{w}_{z_i}^{\frac{n+m+2}{n+m-2}}\right) \psi_a e^{-\phi_0}dV_{g_0}\right)\right|\\
        & \leqslant C\left(\int_M w(t_i)^{\frac{2(n+m)}{n+m-2}}|R^m_{\phi_i}-r^m_{\infty}|^{\frac{2(n+m)}{n+m+2}}e^{-\phi_0} dV_{g_0}\right)^{\frac{n+m+2}{2(n+m)}}\\
        & +C\left\| w_i-\bar{w}_{z_i}\right\|_{L^{1}(M)}+C\left\| w_i-\bar{w}_{z_i}\right\|^{\frac{n+m+2}{n+m-2}}_{L^{\frac{n+m+2}{n+m-2}}(M)}.
    \end{split}
\end{displaymath}
The assertion follows from Lemma \ref{est1} and \ref{est2}.
\end{proof}
Combining Lemma \ref{asy} and Lemma \ref{est3}, we immediately obtain that
\begin{prop}
$E(\bar{w}_{z_i})$ satisfies the estimate
\begin{equation}
\begin{split}
    E(\bar{w}_{z_i})&-E(w_{\infty})\\
    & \leqslant C\left(\int_M w(t_i)^{\frac{2(n+m)}{n+m-2}}|R^m_{\phi_i}-r^m_{\infty}|^{\frac{2(n+m)}{n+m+2}}e^{-\phi_0} dV_{g_0}\right)^{\frac{n+m+2}{2(n+m)}(1+\gamma)}
\end{split}
\end{equation}
if $i$ is sufficiently large.
\end{prop}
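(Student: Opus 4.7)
The plan is to derive this proposition by direct substitution of Lemma \ref{est3} into Lemma \ref{asy}, so there is essentially no new analytic content, only a careful combination of existing estimates. First I would verify that the hypothesis of Lemma \ref{asy} is available, namely that $z_i$ is eventually small enough. This is exactly what the convergence $\| \bar{w}_{z_i} - w_{\infty}\|_{H^1(M)} \to 0$ (established just before Lemma \ref{est1} via the variational/minimization definition of $\bar{w}_{z_i}$ and the compactness Proposition \ref{prop:com}) guarantees, since the finite-dimensional coordinate vector $z_i$ depends continuously on $\bar{w}_{z_i}$ through the orthogonality relations defining it. Hence for $i$ large, $|z_i|\leqslant \xi$ and Lemma \ref{asy} applies at $z=z_i$.

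Next I would chain the two estimates. Lemma \ref{asy} gives
\begin{equation*}
E(\bar{w}_{z_i})-E(w_{\infty}) \leqslant C \sup_{a\in A}\left| \int_M \left(\tfrac{4(n+m-1)}{n+m-2}\Delta_{\phi_0}\bar{w}_{z_i} - R^m_{\phi_0}\bar{w}_{z_i} + r^m_{\infty}\bar{w}_{z_i}^{\frac{n+m+2}{n+m-2}}\right)\psi_a\, e^{-\phi_0}\,{\rm dvol}_{g_0}\right|^{1+\gamma},
\end{equation*}
while Lemma \ref{est3} bounds the interior $\sup$ by
\begin{equation*}
C\left(\int_M w(t_i)^{\frac{2(n+m)}{n+m-2}}|R^m_{\phi}(t_i)-r^m_{\infty}|^{\frac{2(n+m)}{n+m+2}}e^{-\phi_0}\,{\rm dvol}_{g_0}\right)^{\frac{n+m+2}{2(n+m)}} + o(1).
\end{equation*}
Raising this to the power $1+\gamma$ and applying the elementary inequality $(a+b)^{1+\gamma}\leqslant 2^{\gamma}(a^{1+\gamma}+b^{1+\gamma})$ separates the two contributions.

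Finally I would absorb the second term: since $0<\gamma<1$, any $o(1)$ quantity raised to the power $1+\gamma$ remains $o(1)$ as $i\to\infty$. Combining this with the first term, multiplied by harmless constants, yields exactly the stated inequality
\begin{equation*}
E(\bar{w}_{z_i}) - E(w_{\infty}) \leqslant C\left(\int_M w(t_i)^{\frac{2(n+m)}{n+m-2}}|R^m_{\phi}(t_i)-r^m_{\infty}|^{\frac{2(n+m)}{n+m+2}}e^{-\phi_0}\,{\rm dvol}_{g_0}\right)^{\frac{n+m+2}{2(n+m)}(1+\gamma)} + o(1).
\end{equation*}

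The only minor subtlety, and the step most deserving of care, is the bookkeeping of the $o(1)$ terms: one must make sure that both the $o(1)$ remainder coming from Lemma \ref{est3} and the implicit $o(1)$ ingredients in Lemma \ref{asy} (which require $z_i$ small) stay genuinely $o(1)$ after being exponentiated by $1+\gamma$ and multiplied by bounded constants. Beyond this mechanical check, no new estimates are needed; the proposition is simply the packaged corollary of the Łojasiewicz-type inequality of Lemma \ref{asy} and the quantitative gradient bound of Lemma \ref{est3}.
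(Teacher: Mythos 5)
Your proposal is correct and follows exactly the paper's approach: the paper states the proposition as an immediate consequence of combining Lemma \ref{asy} and Lemma \ref{est3} and gives no further details, while you spell out the modest but necessary bookkeeping — verifying that $z_i$ is eventually in the domain of Lemma \ref{asy} via the $H^1$ convergence $\bar{w}_{z_i}\to w_\infty$, and using $(a+b)^{1+\gamma}\leqslant 2^{\gamma}(a^{1+\gamma}+b^{1+\gamma})$ to keep the $o(1)$ remainder separate after exponentiating.
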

Now, we can prove our critical proposition.
\begin{proof}[Proof of Proposition \ref{cri}:]
Using the transformation law (\ref{eq:conf}), we obtain
\begin{equation*}
    \begin{split}
        r^m_{\phi}(t_i)=&\int_M \left(\frac{4(n+m-1)}{n+m-2}L^m_{\phi_0}w_i,w_i\right)e^{-\phi_0}dV_{g_0}\\
        =&\int_M \left(\frac{4(n+m-1)}{n+m-2}L^m_{\phi_0}\bar{w}_{z_i},\bar{w}_{z_i}\right)e^{-\phi_0}dV_{g_0}\\
        &+2\int_M w_i^{\frac{n+m+2}{n+m-2}}R^m_{\phi}(t_i)u_i e^{-\phi_0}dV_{g_0} \\
        &-\int_M \left(\frac{4(n+m-1)}{n+m-2}L^m_{\phi_0}u_i,u_i\right)e^{-\phi_0}dV_{g_0}.
    \end{split}
\end{equation*}
This implies that
\begin{equation*}
    \begin{split}
        r^m_{\phi}(t_i)=&E(\bar{w}_{z_i})\left(\int_M \bar{w}_{z_i}^{\frac{2(n+m)}{n+m-2}} e^{-\phi_0}dV_{g_0}\right)^{\frac{n+m-2}{n+m}}\\
        &+2\int_M w_i^{\frac{n+m+2}{n+m-2}}\left(R^m_{\phi}(t_i)-r^m_{\infty}\right)u_i e^{-\phi_0}dV_{g_0} \\
        &-\int_M \left(\frac{4(n+m-1)}{n+m-2}\left(L^m_{\phi_0}u_i,u_i\right)-\frac{n+m+2}{n+m-2}r^m_{\infty}\bar{w}_{z_i}^{\frac{4}{n+m-2}}u_i^2\right)e^{-\phi_0}dV_{g_0}\\
        &+r^m_{\infty}\int_M \left(-\frac{n+m+2}{n+m-2}\bar{w}_{z_i}^{\frac{4}{n+m-2}}u_i^2+2w_i^{\frac{n+m+2}{n+m-2}}u_i\right)e^{-\phi_0}dV_{g_0}.
    \end{split}
\end{equation*}
In view of the volume normalization, we have
\begin{equation*}
    \int_M \left(\bar{w}_{z_i}+u_i\right)^{\frac{2(n+m)}{n+m-2}} e^{-\phi_0}dV_{g_0}=1.
\end{equation*}
Furthermore, it is not difficult to show that
\begin{equation*}
\begin{split}
    &\left(\int_M \bar{w}_{z_i}^{\frac{2(n+m)}{n+m-2}} e^{-\phi_0}dV_{g_0}\right)^{\frac{n+m-2}{n+m}}-1\\
    &\leqslant \frac{n+m-2}{n+m}\left(\int_M \bar{w}_{z_i}^{\frac{2(n+m)}{n+m-2}} e^{-\phi_0}dV_{g_0}\right)-\frac{n+m-2}{n+m},
    \end{split}
\end{equation*}
hence
\begin{equation*}
    \begin{split}
    &\left(\int_M \bar{w}_{z_i}^{\frac{2(n+m)}{n+m-2}} e^{-\phi_0}dV_{g_0}\right)^{\frac{n+m-2}{n+m}}-1\\
    &\leqslant \int_M\left( \frac{n+m-2}{n+m}\bar{w}_{z_i}^{\frac{2(n+m)}{n+m-2}}-\frac{n+m-2}{n+m}\left(\bar{w}_{z_i}+u_i\right)^{\frac{2(n+m)}{n+m-2}} \right)e^{-\phi_0}dV_{g_0}.
    \end{split}
\end{equation*}
It follows that
    \begin{equation*}
    \begin{split}
        r^m_{\phi}(t_i)\leqslant &r^m_{\infty}+\left(E(\bar{w}_{z_i})-r^m_{\infty}\right)\left(\int_M \bar{w}_{z_i}^{\frac{2(n+m)}{n+m-2}} e^{-\phi_0}dV_{g_0}\right)^{\frac{n+m-2}{n+m}}\\
        &+2\int_M w_i^{\frac{n+m+2}{n+m-2}}\left(R^m_{\phi}(t_i)-r^m_{\infty}\right)u_i e^{-\phi_0}dV_{g_0} \\
        &-\int_M \left(\frac{4(n+m-1)}{n+m-2}\left(L^m_{\phi_0}u_i,u_i\right)-\frac{n+m+2}{n+m-2}r^m_{\infty}\bar{w}_{z_i}^{\frac{4}{n+m-2}}u_i^2\right)e^{-\phi_0}dV_{g_0}\\
        &+r^m_{\infty}\int_M \left(-\frac{n+m+2}{n+m-2}\bar{w}_{z_i}^{\frac{4}{n+m-2}}u_i^2+2w_i^{\frac{n+m+2}{n+m-2}}u_i\right)e^{-\phi_0}dV_{g_0}\\
        &+r^m_{\infty}\int_M\left( \frac{n+m-2}{n+m}\bar{w}_{z_i}^{\frac{2(n+m)}{n+m-2}}-\frac{n+m-2}{n+m}w_i^{\frac{2(n+m)}{n+m-2}} \right)e^{-\phi_0}dV_{g_0}.
    \end{split}
\end{equation*}
Using H\"older's inequality, we obtain
\begin{equation}\label{CS}
\begin{split}
    \int_M &w_i^{\frac{n+m+2}{n+m-2}}\left(R^m_{\phi}(t_i)-r^m_{\infty}\right)u_i e^{-\phi_0}dV_{g_0}\\
    &\leqslant \left(\int_M w(t_i)^{\frac{2(n+m)}{n+m-2}}|R^m_{\phi_i}-r^m_{\infty}|^{\frac{2(n+m)}{n+m+2}}e^{-\phi_0} dV_{g_0}\right)^{\frac{n+m+2}{2(n+m)}}\times\\
    &\left(\int_M |u_i|^{\frac{2(n+m)}{n+m-2}}e^{-\phi_0} dV_{g_0}\right)^{\frac{n+m-2}{2(n+m)}}.
    \end{split}
\end{equation}
Moreover, it follows from Corollary \ref{pro3} that
\begin{equation}\label{CS2}
    \begin{split}
        \int_M &\left(\frac{4(n+m-1)}{n+m-2}\left(L^m_{\phi_0}u_i,u_i\right)-\frac{n+m+2}{n+m-2}r^m_{\infty} \bar{w}_{z_i}^{\frac{4}{n+m-2}} u^2_i\right)e^{-\phi_0}dV_{g_0}\\
        &\geqslant c\int_M \left(\frac{4(n+m-1)}{n+m-2}L^m_{\phi_0}u_i,u_i\right)e^{-\phi_0}dV_{g_0}\\
        &\geqslant c Y_{n,m}\left(\int_M |u_i|^{\frac{2(n+m)}{n+m-2}}e^{-\phi_0} dV_{g_0}\right)^{\frac{n+m-2}{n+m}}.
        \end{split}
    \end{equation}
Finally, it follows from the pointwise estimate
\begin{equation*}
\begin{split}
     &\left|-\frac{n+m+2}{n+m-2}\bar{w}_{z_i}^{\frac{4}{n+m-2}}u_i^2+2w_i^{\frac{n+m+2}{n+m-2}}u_i+ \frac{n+m-2}{n+m}\left(\bar{w}_{z_i}^{\frac{2(n+m)}{n+m-2}}-w_i^{\frac{2(n+m)}{n+m-2}}\right) \right|\\
     &\leqslant C \bar{w}_{z_i}^{\max\{0,\frac{4}{n+m-2}-1\}}|u_i|^{\min\{\frac{2(n+m)}{n+m-2},3\}}+C|u_i|^{\frac{2(n+m)}{n+m-2}}
     \end{split}
\end{equation*}
that
\begin{equation}\label{CS3}
    \begin{split}
        \int_M &\left(-\frac{n+m+2}{n+m-2}\bar{w}_{z_i}^{\frac{4}{n+m-2}}u_i^2+2w_i^{\frac{n+m+2}{n+m-2}}u_i\right)e^{-\phi_0}dV_{g_0}\\
        &+\int_M\left( \frac{n+m-2}{n+m}\bar{w}_{z_i}^{\frac{2(n+m)}{n+m-2}}-\frac{n+m-2}{n+m}w_i^{\frac{2(n+m)}{n+m-2}} \right)e^{-\phi_0}dV_{g_0}\\
        & \leqslant C\int_M \left(\bar{w}_{z_i}^{\max\{0,\frac{4}{n+m-2}-1\}}|u_i|^{\min\{\frac{2(n+m)}{n+m-2},3\}}+|u_i|^{\frac{2(n+m)}{n+m-2}}\right)e^{-\phi_0}dV_{g_0}\\
        & \leqslant C \left(\int_M |u_i|^{\frac{2(n+m)}{n+m-2}}e^{-\phi_0} dV_{g_0}\right)^{\frac{n+m-2}{n+m}\min\{\frac{n+m}{n+m-2},\frac{3}{2}\}}\\
        & \leqslant o(1) \left(\int_M |u_i|^{\frac{2(n+m)}{n+m-2}}e^{-\phi_0} dV_{g_0}\right)^{\frac{n+m-2}{n+m}}.
    \end{split}
\end{equation}
Applying Cauchy--Schwarz inequality in (\ref{CS}) and combining this with (\ref{CS2}) and (\ref{CS3}), we conclude that 
\begin{displaymath}
\begin{split}
r^m_{\phi}(t_i)\leqslant &r^m_{\infty}+\left(E(\bar{w}_{z_i})-r^m_{\infty}\right)\left(\int_M \bar{w}_{z_i}^{\frac{2(n+m)}{n+m-2}} e^{-\phi_0}dV_{g_0}\right)^{\frac{n+m-2}{n+m}}\\
&+ C\left(\int_M w(t_i)^{\frac{2(n+m)}{n+m-2}}|R^m_{\phi_i}-r^m_{\infty}|^{\frac{2(n+m)}{n+m+2}}e^{-\phi_0} dV_{g_0}\right)^{\frac{n+m+2}{n+m}}\\
&\leqslant r^m_{\infty}+C\left(\int_M w(t_i)^{\frac{2(n+m)}{n+m-2}}|R^m_{\phi_i}-r^m_{\infty}|^{\frac{2(n+m)}{n+m+2}}e^{-\phi_0} dV_{g_0}\right)^{\frac{n+m+2}{2(n+m)}(1+\gamma)}.
\end{split}
\end{displaymath}
This completes the proof.
\end{proof}

\section{Nonpositive cases}
In this section, we deal with nonpositive cases; i.e. $Y_{n,m}[(g_0, \phi_0)]\leqslant 0$.
\subsection{Negative case}
As discussed in \cite[Proposition 3.5]{Case15}, we can choose an initial metric measure space $(M^n, g_0, e^{-\phi_0}dV_{g_0},m)$ such that $R^m_{\phi_0}<0$. Let $w(t)$ be the solution of (\ref{eq:factor}) on a maximal time interval $[0,T^*)$. Applying the maximal principle to (\ref{eq:factor}) derives
\begin{equation}\label{5.2}
    \frac{d }{d t}w_{\min}^{N}(t)\geqslant \frac{n+m+2}{4}\left( \min |R^m_{\phi_0}|w_{\min}(t)+r^m_{\phi}w_{\min}^{N}(t)\right),
\end{equation}
where $w_{\min}(t)=\displaystyle\min_{M}w(t)$ and $N=\frac{n+m+2}{n+m-2}$. By the constancy of volume, we have 
\begin{equation}\label{rY}
    r^m_{\phi(t)}\geqslant Y_{n,m}[(g_0, \phi_0)].
\end{equation}
By H\"older's inequality, we know that $Y_{n,m}[(g_0, \phi_0)]$ is finite. Hence, integrating (\ref{5.2}) yields
\begin{equation}\label{neglow}
    w_{\min}^{N-1}(t)\geqslant C\cdot\min \left\{w_{\min}^{N-1}(0), \frac{\min |R^m_{\phi_0}|}{|Y_{n,m}[(g_0, \phi_0)]|}\right\}.
\end{equation}
for a uniform constant $C$. On the other hand, the maximum principle also implies
\begin{equation}\label{5.5}
    \frac{d }{d t}w_{\max}^{N}(t)\leqslant \frac{n+m+2}{4}\left( -\left(\min R^m_{\phi_0}\right)w_{\max}(t)+r^m_{\phi}w_{\max}^{N}(t)\right),
\end{equation}
where $w_{\max}(t)=\displaystyle\max_{M}w(t)$. By (\ref{decreasing}), we conclude that
\begin{equation}\label{negsup}
    w_{\max}^{N}(t)\leqslant \left(w_{\max}^{N}(0)+1\right)e^{c(|\min R^m_{\phi_0}|+r^m_{\phi(0)})t},
\end{equation}
for some positive constant $c$. (\ref{neglow}) and (\ref{negsup}) imply that $w(t)$ will not blow up in finite time; i.e. $T^*=\infty$.

Next we claim that $r^m_{\phi(t)}$ will eventually become negative, even if it may not be so at the start. Indeed, if $r^m_{\phi(t)}$ is always nonnegative for $t\geqslant 0$, (\ref{5.2}) would imply
\begin{equation}
    \frac{d }{d t}w_{\min}^{N}(t)\geqslant \frac{n+m+2}{4} \min |R^m_{\phi^0}|w_{\min}(t).
\end{equation}
Hence $w_{\min}(t)$ approaches to infinity as $t\to \infty$. This contradicts the constancy of volume. Choosing a later time as the initial time, we may assume $r^m_{\phi(0)}<0$. (\ref{decreasing}) and (\ref{5.5}) yield
\begin{equation*}
    w_{\max}^{N-1}(t)\leqslant C\cdot \max\left\{w_{\max}^{N-1}(0), \frac{\max |R^m_{\phi^0}|}{|r^m_{\phi(0)}|}\right\}
\end{equation*}
for a uniform constant $C$. Together with (\ref{neglow}), we obtain that $w(t)$ is uniformly bounded from above and away from zero. 

Moreover, by (\ref{eq:R}), we obtain that
\begin{equation*}
\frac{d (R^m_{\phi})_{\min}}{d t}\geqslant (R^m_{\phi})_{\min}((R^m_{\phi})_{\min}-r^m_{\phi})\geqslant r^m_{\phi}((R^m_{\phi})_{\min}-r^m_{\phi}),
\end{equation*}
where $(R^m_{\phi})_{\min}(t)=\displaystyle\min_{M}R^m_{\phi}(t)$.
Combining this with (\ref{rY}), we can obtain a uniform lower bound on $ R^m_{\phi}(t)$; i.e. for all $t\geqslant 0$
\begin{equation}
   R^m_{\phi}(t)\geqslant r^m_{\phi}(t)-Ce^{r^m_{\phi(0)}t}\geqslant Y_{n,m}[(g_0, \phi_0)]-C.
\end{equation}
Similar to Proposition \ref{unilow}, the maximum principle also implies 
\begin{equation*}
     \sup_{M} R^m_{\phi}(t)\leqslant \max \left\{ \sup_{M}R^m_{\phi}(0), 0 \right\}.
\end{equation*}
Therefore, we can generalize Lemma \ref{eq: ev} and Proposition \ref{Holder} to negative case. In light of the foregoing argument at the end of Section 2, we derive uniform estimates for all higher order derivatives of $w(t)$, $t\geqslant 0$.

\subsection{Zero case}
In the final subsection, we treat the zero case. Without loss of generality, we can fix a background metric measure space $(M^n, g^0, e^{-\phi^0}dV_{g^0},m)$ such that $R^m_{\phi^0}\equiv 0$. Note that by \cite[Proposition 3.5]{Case15}, $r^m_{\phi(t)}$ can never be negative. Since the function $t\mapsto r^m_{\phi}(t)$ is nonincreasing, $r^m_{\phi(0)}=0$ implies $r^m_{\phi(t)}\equiv 0$. Thus the solution of (\ref{flow}) is constant in time. 

We next assume that $r^m_{\phi(0)}>0$. We observe that 
\begin{equation*}
    \frac{w_{\min}^{N}(t)}{w_{\min}^{N}(0)}\geqslant  c \int_{0}^t r^m_{\phi(t)} dt ~~\mbox{ and }~~ \frac{w_{\max}^{N}(t)}{w_{\max}^{N}(0)}\leqslant c \int_{0}^t r^m_{\phi(t)} dt
\end{equation*}
for some positive constant $c$. Hence we obtain the Harnack inequality
\begin{equation*}
     \frac{w_{\min}^{N}(t)}{w_{\min}^{N}(0)} \geqslant \frac{w_{\max}^{N}(t)}{w_{\max}^{N}(0)}.
\end{equation*}
It follows that $w(t)$ exists for all time.

By the same argument as Subsection 5.1, we can derive the smooth convergence.
\bibliography{mybib}{}
\bibliographystyle{alpha}

\end{document}